\documentclass[12pt,twoside,reqno]{amsart}

\usepackage{amssymb,amsmath,amstext,amsthm,amsfonts,amscd,xcolor}

\usepackage{dsfont}

\usepackage[ansinew]{inputenc} 
\usepackage{graphicx}
\usepackage[mathscr]{eucal}
\usepackage{hyperref}

\newcommand{\blob}{\rule[.2ex]{.8ex}{.8ex}}


\newcommand{\R}{\mathbb{R}}
\newcommand{\C}{\mathbb{C}}
\newcommand{\N}{\mathbb{N}}
\newcommand{\Z}{\mathbb{Z}}

\newcommand{\GL}{{\rm GL}}

\newcommand{\Mat}{{\rm Mat}}
\newcommand{\V}{\mathscr{V}}
\newcommand{\EE}{\mathbb{E}}

\newcommand{\abs}[1]{\bigl| #1 \bigr|} 
\newcommand{\norm}[1]{\lVert#1\rVert} 

\newcommand{\transl}{{T}} 

\newcommand{\ep}{\epsilon}

\newcommand{\cocycles}{\mathcal{C}}
\newcommand{\observables}{\Xi}

\newcommand{\params}{\mathscr{P}}

\newcommand{\mesfs}{\mathscr{I}}
\newcommand{\devfs}{\mathscr{E}}

\newcommand{\dev}{\ep}                
\newcommand{\devf}{\underline{\dev}}  
\newcommand{\mes}{\iota}              
\newcommand{\mesf}{\underline{\mes}}  


\newcommand{\An}[1]{A^{({#1})}}  

\newcommand{\Gr}{{\rm Gr}}
\newcommand{\Pp}{\mathbb{P}}
\newcommand{\FF}{\mathscr{F}}

\newcommand{\ind}{\mathds{1}}

\newcommand{\B}{\mathscr{B}}

\newcommand{\dist}{{\rm dist}}
\newcommand{\one}{{\bf 1}}
\newcommand{\Ktil}{\widetilde{K}}

\newcommand{\dtil}{\widetilde{d}}

\newcommand{\Proj}{\mathbb{P}(\R^{m})}
\newcommand{\Ahat}{A}
\newcommand{\Hscr}{\mathscr{H}}

\newcommand{\filt}{F}

\newcommand{\Filt}{\mathfrak{F}}

\newcommand{\Filatp}[1]{\mathfrak{F}_{\supset{#1}}}

\newcommand{\dec}{E_{\cdot}}
\newcommand{\Dec}{\mathfrak{D}}

\newcommand{\Decatp}[1]{\mathfrak{D}_{\supset{#1}}}

\makeatletter
\newsavebox{\@brx}
\newcommand{\llangle}[1][]{\savebox{\@brx}{\(\m@th{#1\langle}\)}%
  \mathopen{\copy\@brx\mkern2mu\kern-0.9\wd\@brx\usebox{\@brx}}}
\newcommand{\rrangle}[1][]{\savebox{\@brx}{\(\m@th{#1\rangle}\)}%
  \mathclose{\copy\@brx\mkern2mu\kern-0.9\wd\@brx\usebox{\@brx}}}
\makeatother

\newcommand{\Fcal}{\mathcal{F}}
\newcommand{\randcocycles}[2]{\B^{{#1}}_{{#2}}(K)}
\newcommand{\prandcocycles}[2]{\B^{{#1}}_{{#2}}(K,p)}
\newcommand{\irredcocycles}[2]{\mathcal{I}^{{#1}}_{{#2}}(K)}
\newcommand{\pirredcocycles}[2]{\mathcal{I}^{{#1}}_{{#2}}(K,p)}

\newcommand{\ctwo}{h}
\newcommand{\Xfrk}{\mathfrak{X}}

\newcommand{\normtwo}[1]{
{\left\vert\kern-0.25ex\left\vert\kern-0.25ex\left\vert #1 
    \right\vert\kern-0.25ex\right\vert\kern-0.25ex\right\vert} } 


\newcommand{\x}{(x_1, x_2)}


\theoremstyle{plain}
\newtheorem{theorem}{Theorem}[section]
\newtheorem{proposition}{Proposition}[section]
\newtheorem{corollary}[proposition]{Corollary}
\newtheorem{lemma}[proposition]{Lemma}
\newtheorem{definition}{Definition}[section]


\newtheorem{remark}{Remark}[section]

\title[Large deviations for random cocycles]{Large deviation type estimates for random cocycles}

\date{}

\begin{document}

\author[P. Duarte]{Pedro Duarte}
\address{Departamento de Matem\'atica and CMAFIO\\
Faculdade de Ci\^encias\\
Universidade de Lisboa\\
Portugal 
}
\email{pmduarte@fc.ul.pt}

\author[S. Klein]{Silvius Klein}
\address{ Department of Mathematical Sciences\\
Norwegian University of Science and Technology (NTNU)\\
Trondheim, Norway\\ 
and IMAR, Bucharest, Romania }
\email{silvius.klein@math.ntnu.no}

\begin{abstract}
In this paper we prove the continuity of all Lyapunov exponents, as well as the continuity of the Oseledets decomposition, for a class of irreducible cocycles over strongly mixing Markov shifts. 
Moreover, gaps in the Lyapunov spectrum lead to a H\"older modulus of  continuity for these quantities.
This result is an application of the abstract continuity theorems  obtained in~\cite{LEbook}, and generalizes a theorem of
E. Le Page on the H\"older continuity of the maximal LE
for one-parameter families of strongly irreducible and contracting cocycles over a Bernoulli shift.

This is a draft of a chapter  in our forthcoming research monograph~\cite{LEbook}.  
\end{abstract}

\maketitle

\section{Introduction and statements}
\label{random_intro}

We define the class of random cocycles over Markov shifts  and describe our assumptions on them. We then formulate the main statements, and sketch the argument for proving large deviation type estimates. Finally we relate our findings to other results for similar models.

\subsection{Description of the model}
\label{random_model}
Let $\Sigma$ be a compact metric space and $\FF$  its Borel $\sigma$-field.
 
\begin{definition}\label{Markov:kernel}
A {\em Markov kernel} is a function
$K:\Sigma\times\FF\to [0,1]$ such that
\begin{enumerate}
\item[(1)] for every $x\in \Sigma$, $A\mapsto K(x,A)$ is a probability measure in $\Sigma$, also denoted by $K_x$,
\item[(2)] for every $A\in\FF$, the function $x\mapsto K(x,A)$ is $\FF$-measurable.
\end{enumerate}
\end{definition}
The iterated Markov kernels are defined  recursively, setting  
\begin{enumerate}
\item[(a)] $K^1=K$,
\item[(b)]  $K^{n+1}(x,A)= \int_\Sigma K^n(y,A)\, K(x,dy)$, \,  for all $n\geq 1$.
\end{enumerate}
Each power $K^n$ is itself a  Markov kernel on $(\Sigma,\FF)$.

A probability measure $\mu$ on $(\Sigma,\FF)$ is called {\em $K$-stationary} if for all $A\in\FF$,
$$ \mu(A)=\int  K(x,A)\, \mu(dx)\;.  $$
A set $A\in\FF$ is said to be {\em $K$-invariant} when
$K(x,A)=1$ for all $x\in A$ and $K(x,A)=0$ for all
$x\in X\setminus A$. A $K$-stationary measure $\mu$ is called {\em ergodic} when there is no $K$-invariant set $A\in \FF$ such that $0<\mu(A)<1$. As usual, ergodic measures are the extremal points in the convex set of $K$-stationary measures.

\begin{definition}
A  {\em Markov system} is a pair $(K,\mu)$, where $K$ is a Markov kernel on $(\Sigma,\FF)$ and  $\mu$ is a $K$-stationary probability measure.
\end{definition}

Let $(K,\mu)$ be a Markov system.
There is a canonical construction, due to Kolmogorov,
of a probability space $(X,\FF,\Pp_\mu)$ and a Markov stochastic process $\{e_n:X\to \Sigma\}_{n\geq 0}$ with initial distribution $\mu$ and transition kernel $K$, i.e., for all $x\in\Sigma$ and $A\in\FF$,
\begin{enumerate}
\item $\Pp_\mu[ \, e_0\in A\,] = \mu(A)$, 
\item $\Pp_\mu[ \, e_{n}\in A\,\vert \, e_{n-1}=x\, ] = K(x, A)$.
\end{enumerate}

We briefly outline this construction. Elements in $\Sigma$ are called {\em states}.
Consider the space $X^+=\Sigma^\N$  of {\em state sequences}
 $x=(x_n)_{n\in\N}$, with $x_n\in\Sigma$ for all $n\in\N$,  and let $\Fcal^+$ be the product $\sigma$-field
$\Fcal^+=\FF^ \N$ generated by the $\FF$-cylinders, i.e.,  generated  by sets of the form
$$ C(A_0,\ldots, A_m):=\{\, x\in X^+\,\colon\,
x_{j}\in A_j,\; \text{ for }\, 0\leq j\leq m\, \} \;, $$
where  $A_0,\ldots, A_m\in\FF$ are measurable sets.
The (topological) product space $X^ +$ is compact and metrizable.
The $\sigma$-field  $\Fcal^+$ coincides with the Borel $\sigma$-field of the compact space $X^ +$.
\begin{definition}
\label{def measure Pp theta}
Given any probability measure $\theta$ on $(\Sigma,\FF)$, the following expression determines a pre-measure
$$ \Pp_\theta^+[ C(A_0,\ldots, A_m) ]:=
\int_{A_m} \cdots \int_{A_0} \theta(dx_0)\,\prod_{j=1}^{m} K(x_{j-1},dx_j)$$
on the  semi-algebra of $\FF$-cylinders. By Carath\'eodory's extension theorem this pre-measure extends to a unique probability measure
$\Pp_\theta^+$ on $(X^+,\Fcal^+)$.
\end{definition}
It follows from this definition that the 
sequence of random variables $e_n:X^+\to\Sigma$, defined by $e_n(x):=x_n$  for $x=(x_n)_{n\in\N}$,
is a Markov chain with initial distribution $\theta$ and transition kernel $K$ w.r.t. the probability space $(X^+,\Fcal^+,\Pp_\theta^+)$. 
It also follows that  the process $\{e_n\}_{n\geq 0}$ is stationary w.r.t. $(X,\Fcal^+,\Pp_\theta^+)$ if and only if   $\theta$ is a $K$-stationary measure.

Consider now the space $X=\Sigma^\Z$ of bi-infinite {\em state sequences}
 $x=(x_n)_{n\in\Z}$, with $x_n\in\Sigma$ for all $n\in\Z$,  and let $\Fcal$ be the product $\sigma$-field
$\Fcal=\FF^ \Z$ generated by the $\FF$-cylinders in $X$.
Again the  topological product space $X$ is both metrizable and compact, and the $\sigma$-field  $\Fcal$ is the Borel $\sigma$-field on the compact metric space $X$.
There is a canonical projection  
 $\pi:X\to X^+$, defined by $\pi(x_n)_{n\in\Z}=(x_n)_{n\in\N}$, relating these two spaces.

Markov systems are probabilistic evolutionary models, which can also be studied in
dynamical terms. For that we introduce  the {\em shift mappings}. 

\begin{definition}
The {\em one-sided shift} is the map  $T:X^+\to X^+$, \, $T(x_n)_{n\geq 0}=(x_{n+1})_{n\geq 0}$,
while the {\em two-sided shift} is the map  $T:X\to X$, \, $T(x_n)_{n\in\Z}=(x_{n+1})_{n\in\Z}$.
\end{definition}

The map $T:X^+\to X^+$ is continuous, and hence $\Fcal^+$-measurable.
It also preserves the measure $\Pp_\mu^+$, i.e., $T_\ast\Pp_\mu^+=\Pp_\mu^+$.
Moreover, the Markov process $\{e_n\}_{n\geq 0}$ on $(X^+,\Fcal^+,\Pp_\mu^+)$ is dynamically generated by the observable $e_0$ in the sense that
$e_n= e_0\circ T^n$, for all $n\geq 0$.

The two-sided-shift  $T:X\to X$ 
is a homeomorphism, and hence $\Fcal$-bimeasurable.
The projection $\pi:X\to X^+$ semi-conjugates the two shifts. The two-sided-shift  is the {\em natural extension} of the one-sided-shift.
According to this construction (see~\cite{Petersen}), there is a unique probability measure $\Pp_\mu$ on $(X,\Fcal)$ such that  $T_\ast\Pp_\mu=\Pp_\mu$ and  $\pi_\ast\Pp_\mu=\Pp_\mu^+$. We will refer to the measures $\Pp_\mu^+$ and $\Pp_\mu$  as the {\em Kolmogorov extensions} of the Markov system $(K,\mu)$.

\begin{definition}\label{Markov:dyn:sys}
Given a Markov system  $(K,\mu)$ let
$\Pp_\mu$ be the Kolmogorov extension of $(K,\mu)$ on $X=\Sigma^ \Z$. 
The dynamical system  $(X, \Pp_\mu, T)$ is called a {\em Markov shift}.
\end{definition}


Let $\left(L^\infty (\Sigma),\norm{\cdot}_\infty\right)$
denote the Banach algebra of complex bounded $\FF$-measurable functions with the sup norm $\norm{f}_\infty=\sup_{x\in \Sigma} \abs{f(x)}$.
The following concept corresponds to condition (A1) in~\cite{Bou}.

\begin{definition} 
\label{Strong:mixing}
We say that a Markov system $(K,\mu)$ is  
{\em strongly mixing } if  there are constants
$C>0$ and $0<\rho<1$ such that for every $f\in L^\infty(\Sigma)$, all $x\in\Sigma$ and $n\in\N$,  
\begin{equation*}
\abs{\int_\Sigma f(y)\,K^n(x,dy) - \int_\Sigma f(y)\,\mu(dy)  }  \leq C\,\rho^n \,\norm{f}_\infty   \;. 
\end{equation*}
\end{definition}

It follows from this definition that,

\begin{proposition}
If the Markov system  $(K,\mu)$ is  strongly mixing  then the Markov shift $(X,\Pp_\mu,T)$ is a mixing dynamical system. 
\end{proposition}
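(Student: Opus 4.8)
The plan is to show that $(X,\Pp_\mu,T)$ is mixing by verifying the mixing condition on a generating semi-algebra and then invoking a standard approximation argument. Recall that a measure-preserving system $(X,\Pp_\mu,T)$ is mixing if for all measurable $E,F\subset X$ one has $\Pp_\mu[T^{-n}E\cap F]\to \Pp_\mu[E]\,\Pp_\mu[F]$ as $n\to\infty$. Since the cylinders generate $\Fcal$ and form an intersection-stable family, it suffices (by a routine density/monotone class argument, approximating arbitrary $E,F$ in $L^1(\Pp_\mu)$ by finite unions of cylinders) to establish this convergence when $E$ and $F$ are cylinders. Moreover, because $T$ is the natural extension of the one-sided shift and $\pi_\ast \Pp_\mu=\Pp_\mu^+$, it is enough to work on the one-sided system $(X^+,\Pp_\mu^+,T)$: any cylinder in $X$ depends on only finitely many coordinates, and applying a large enough power of $T$ we may assume both $E$ and $F$ are pull-backs under $\pi$ of cylinders in $X^+$.

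First I would fix two one-sided cylinders $E=C(A_0,\dots,A_k)$ and $F=C(B_0,\dots,B_\ell)$ and compute $\Pp_\mu^+[T^{-n}E\cap F]$ for $n>\ell$ directly from Definition~\ref{def measure Pp theta}. Using the Markov property, this probability factors as an integral over the ``$F$-block'' coordinates $x_0,\dots,x_\ell$, followed by a transition of length $n-\ell$ from $x_\ell$ to the start of the ``$E$-block'', followed by the integral defining $\Pp_\mu^+[E]$ but with the initial coordinate $x_0^{(E)}$ weighted by $K^{n-\ell}(x_\ell,\cdot)$ instead of by $\mu$. Concretely, writing $g(y):=\int_{A_k}\cdots\int_{A_0}\ind_{A_0}(y)\,\prod_{j=1}^k K(x_{j-1},dx_j)$ for the ``forward'' function attached to $E$ (a bounded $\FF$-measurable function of $y=x_0^{(E)}$, with $\int g\,d\mu=\Pp_\mu^+[E]$), one gets
\[
\Pp_\mu^+[T^{-n}E\cap F]=\int_{B_\ell}\!\!\cdots\!\int_{B_0}\Bigl(\int_\Sigma g(y)\,K^{n-\ell}(x_\ell,dy)\Bigr)\,\mu(dx_0)\prod_{j=1}^\ell K(x_{j-1},dx_j)\,.
\]

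Now I would apply the strong mixing hypothesis (Definition~\ref{Strong:mixing}) to the inner integral: for every $x_\ell\in\Sigma$,
\[
\Bigl|\int_\Sigma g(y)\,K^{n-\ell}(x_\ell,dy)-\int_\Sigma g(y)\,\mu(dy)\Bigr|\le C\,\rho^{\,n-\ell}\,\norm{g}_\infty\le C\,\rho^{\,n-\ell}\,.
\]
Substituting this into the displayed expression and using that the outer integral is against a probability measure (the total mass of $\mu(dx_0)\prod K(x_{j-1},dx_j)$ restricted to the $B_j$'s is at most $1$), the inner average may be replaced by $\int g\,d\mu=\Pp_\mu^+[E]$ at the cost of an error bounded by $C\rho^{\,n-\ell}$. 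What remains is exactly $\Pp_\mu^+[E]\cdot\Pp_\mu^+[F]$, since the outer integral with the factor $\Pp_\mu^+[E]$ pulled out equals $\Pp_\mu^+[E]\,\Pp_\mu^+[F]$. Hence $\bigl|\Pp_\mu^+[T^{-n}E\cap F]-\Pp_\mu^+[E]\,\Pp_\mu^+[F]\bigr|\le C\rho^{\,n-\ell}\to 0$, which is in fact an exponential rate of mixing on cylinders.

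The only genuinely routine-but-necessary step is the passage from cylinders to arbitrary measurable sets, and I expect this to be the main (though standard) technical point: one shows the set of pairs $(E,F)$ for which the convergence holds is closed under the relevant limits, and uses that finite disjoint unions of cylinders are dense in $\Fcal$ for the $L^1(\Pp_\mu)$-pseudometric $d(E,F)=\Pp_\mu(E\triangle F)$, together with the uniform bound $|\Pp_\mu[T^{-n}E\cap F]-\Pp_\mu[T^{-n}E'\cap F']|\le \Pp_\mu(E\triangle E')+\Pp_\mu(F\triangle F')$ to control the error under approximation. Finally, lifting back from $X^+$ to $X$ via $\pi$ is immediate because $\pi$ intertwines the shifts and pushes $\Pp_\mu$ to $\Pp_\mu^+$, so the mixing of the one-sided system transfers verbatim to the two-sided Markov shift.
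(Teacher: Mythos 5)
Your proof is correct and follows essentially the same route as the paper's: both arguments reduce to cylinders (or observables depending on finitely many coordinates), condition on the last coordinate of the "past" block so that the $n$-step transition $K^{n-\ell}$ (resp.\ $K^n$) appears explicitly, and then apply Definition~\ref{Strong:mixing} to the resulting "future" function to obtain a uniform $C\rho^{n-\ell}$ error. The only cosmetic difference is that the paper works directly on the two-sided space $X$ with one observable depending on negative and one on non-negative coordinates, while you first shift cylinders forward to reduce to $X^+$; this changes nothing of substance.
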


\begin{proof}
Consider a bounded measurable observable
$f:X\to\R$ depending only on the coordinates
$x_0,\ldots, x_p$, and write $f(x)=f(x_0,\ldots, x_p)$. Let $g(x)=g(x_{-q},\ldots, x_{-1})$ be another bounded measurable observable depending  only on the coordinates
$x_{-q},\ldots, x_{-1}$ with $q\in\N$. 
Denote by $\{e_n\}_{n\in \Z}$ the Markov process
on $(X,\Pp_\mu)$ with common distribution $\mu$ and transition kernel $K$.
By the strong mixing property
\begin{align*}
\EE_{x_0} [ f(e_n,\ldots, e_{n+p}) ] = \int_\Sigma \cdots \int_\Sigma
 f(x_n,\ldots, x_{n+p})\,
K^{n}(x_0, d x_n)\,\prod_{j=n}^{n+p-1} K(x_{j}, d x_{j+1}) 
\end{align*}
converges uniformely (in $x_0$) to
\begin{align*}
\int_\Sigma \cdots \int_\Sigma
 f(x_n,\ldots, x_{n+p})\,
\mu(d x_n)\,\prod_{j=n}^{n+p-1} K(x_{j}, d x_{j+1}) =  \EE_\mu(f)\;.
\end{align*}
Hence
\begin{align*}
& \EE_\mu[ (f\circ T^n)\,g ] =
\EE_\mu [g(e_{-q},\ldots, e_{-1})\, f(e_n,\ldots, e_{n+p}) ] \\
&\quad = \int_\Sigma \cdots \int_\Sigma
 g(x_{-q},\ldots, x_{-1})\, \EE_{x_0}[ f(e_n,\ldots, e_{n+p}) ]\, \mu(d x_{-q})\,\prod_{j=-q}^{-1} K(x_{j}, d x_{j+1}) 
\end{align*}
converges to
\begin{align*}
& \EE_\mu(f)\,  \int_\Sigma \cdots \int_\Sigma
 g(x_{-q},\ldots, x_{-1})\,
\mu(d x_{-q})\,\prod_{j=-q}^{-1} K(x_{j}, d x_{j+1}) = \EE_\mu(f)\, \EE_\mu(g)\;.
\end{align*}
The mixing property of the shift $(X,\Pp_\mu,T)$ follows applying the previous argument to the indicator funtions of any two cylinders, and noting that the $\sigma$-algebra of cylinders generates the Borel $\sigma$-field of $X$.
\end{proof}

Examples of strongly mixing Markov systems arise naturally from Markov kernels
satisfying the  {\em Doeblin condition} (see~\cite{Doob}). 
 We say that $K$ satisfies the  Doeblin condition
 if  there is a positive finite measure $\rho$ on $(\Sigma,\FF)$ and some $\varepsilon>0$ such that for all $x\in\Sigma$ and $A\in\FF$, 
 $$ K(x,A)\geq 1-\varepsilon\quad \Rightarrow \quad  \rho(A)\geq\varepsilon\;. $$
Given $A\in \FF$,  define
$$L^\infty(A):=\{\, f\in L^\infty(\Sigma)\,:\,
f\vert_{\Sigma\setminus A}\equiv 0\,\}\;,$$
which is a closed Banach sub-algebra of
$\left(L^\infty(\Sigma),\norm{\cdot}_\infty\right)$.

\begin{proposition} Let $(\Sigma,K)$ be a Markov system.
If $K$ satisfies the Doeblin condition  then there are sets $\Sigma_1,\ldots, \Sigma_m$ in $\FF$ and  probability measures 
$\nu_1,\ldots, \nu_m$ on $\Sigma$ such that
for all $i,j=1,\ldots, m$,
\begin{enumerate}
\item $\Sigma_i\cap \Sigma_j=\emptyset$ \, when \,$i\neq j$,
\item $\Sigma_i$ is $K$-forward invariant,
i.e., $K(x,\Sigma_i)=1$ for  $x\in\Sigma_i$,
\item $\nu_i$ is  $K$-stationary and ergodic with $\nu_i(\Sigma_j)=\delta_{ij}$,
\item $lim_{n\to+\infty} K^{n}(x,\Sigma_1\cup \ldots \cup\Sigma_m)=1$, with geometric uniform speed of convergence,
for all $x\in\Sigma$,
\item $\nu(\Sigma_1\cup \ldots \cup\Sigma_m)=1$, for every $K$-stationary probability $\nu$.
\end{enumerate}
Moreover, for every $1\leq i\leq m$ there is an integer $p_i\in\N$ 
and measurable sets $\Sigma_{i,1},\ldots, \Sigma_{i,p_i}\in\FF$
such that
\begin{enumerate}
\item   $\{\Sigma_{i,1},\ldots, \Sigma_{i,p_i}\}$ is a  partition of  $\Sigma_i$,
\item $K(x,\Sigma_{i,j+1})=1$ for  $x\in \Sigma_{i,j}$  and $1\leq j \leq p_i$, with  $\Sigma_{i,p_i+1}=\Sigma_{i,1}$,
\item $(\Sigma_{i,j},K^{p_i})$ is strongly mixing 
for all $1\leq j \leq p_i$.
\end{enumerate}
\end{proposition}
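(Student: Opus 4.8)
I plan to establish this as an instance of Doob's classical decomposition theorem for Markov kernels satisfying Doeblin's condition, organized in three stages: an \emph{a priori} finiteness bound coming straight from the quantitative hypothesis, the construction of the ergodic classes $\Sigma_i$ together with the measures $\nu_i$ and items (1)--(5), and finally the cyclic sub-decomposition of each $\Sigma_i$. A preliminary remark that streamlines everything: in the strong form assumed here (with ``$n_0=1$''), the Doeblin condition is inherited by every power of $K$. Indeed, its contrapositive reads $\rho(A)<\varepsilon\Rightarrow K(x,\Sigma\setminus A)\geq\varepsilon$ for all $x$, and since this lower bound holds for \emph{every} $x$, Chapman--Kolmogorov gives $K^n(x,\Sigma\setminus A)=\int K(y,\Sigma\setminus A)\,K^{n-1}(x,dy)\geq\varepsilon$ for all $n\geq1$; that is, $\rho(A)<\varepsilon\Rightarrow K^n(x,A)\leq1-\varepsilon$ for all $n\geq1$ and all $x$.

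For Stages 1 and 2: from this contrapositive form, any \emph{nonempty} $K$-forward invariant set $A$ (one with $K(x,A)=1$ for $x\in A$) satisfies $\rho(A)\geq\varepsilon$, since an $x\in A$ would otherwise give $1=K(x,A)\leq1-\varepsilon$. As $\rho$ is finite, every pairwise disjoint family of nonempty forward invariant sets has at most $N:=\lfloor\rho(\Sigma)/\varepsilon\rfloor$ members. A standard exhaustion argument (see~\cite{Doob}) then yields \emph{minimal} forward invariant sets, and a maximal pairwise disjoint family of these, $\{\Sigma_1,\dots,\Sigma_m\}$ with $m\leq N$, gives (1) and (2) by construction. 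On each $\Sigma_i$ the restricted kernel is again Doeblin, hence carries a stationary probability $\nu_i$ (existence of stationary measures under Doeblin's condition is classical); minimality of $\Sigma_i$ forces $\nu_i$ to be ergodic and the unique stationary measure on $\Sigma_i$, since a second ergodic component or a nontrivial invariant set would, after the usual measurable tidying, split $\Sigma_i$ into two disjoint forward invariant pieces. As the $\Sigma_i$ are disjoint and $\nu_i$ is carried by $\Sigma_i$, we get $\nu_i(\Sigma_j)=\delta_{ij}$, which is (3). For (4), set $\Sigma_0:=\Sigma\setminus(\Sigma_1\cup\dots\cup\Sigma_m)$; by maximality $\Sigma_0$ contains no nonempty forward invariant set. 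I would then show $c:=\limsup_n\sup_x K^n(x,\Sigma_0)=0$: if $c>0$, a set on which the $\limsup$ is nearly attained, together with the contrapositive Doeblin bound and a pigeonhole over $n$, would produce a forward invariant subset of $\Sigma_0$ of $\rho$-measure $\geq\varepsilon$, a contradiction; a submultiplicativity estimate for $n\mapsto\sup_xK^n(x,\Sigma_0)$ then upgrades $c=0$ to the claimed uniform geometric rate. Finally, for any $K$-stationary probability $\nu$, stationarity gives $\nu(\Sigma_0)=\int K^n(x,\Sigma_0)\,\nu(dx)\to0$, so $\nu(\Sigma_0)=0$, which is (5).

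For Stage 3: fix $i$ and work inside the ergodic Doeblin system $(\Sigma_i,K|_{\Sigma_i},\nu_i)$. Let $p_i$ be its period, the largest $p\geq1$ for which $\Sigma_i$ admits a partition into nonempty sets $C_0,\dots,C_{p-1}$ with $K(x,C_{j+1\bmod p})=1$ for $x\in C_j$. Such $p$ exist and satisfy $p\leq N$, because each $C_j$ is then $K^{p}$-forward invariant and nonempty (equivalently, $K(z,C_j)=1$ for $z\in C_{j-1}\neq\emptyset$), hence has $\rho$-measure $\geq\varepsilon$, so there are at most $\rho(\Sigma)/\varepsilon$ of them. Taking $p_i$ maximal and the corresponding $\Sigma_{i,1},\dots,\Sigma_{i,p_i}$ gives items (1) and (2) of the ``Moreover'' part. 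Maximality of $p_i$ forces each $(\Sigma_{i,j},K^{p_i})$ to be aperiodic (a finer cyclic refinement would contradict it) and ergodic (a nontrivial $K^{p_i}$-invariant subset of $\Sigma_{i,j}$, together with its cyclic images, would be a proper forward invariant subset of $\Sigma_i$, contradicting minimality). Since $K^{p_i}$ inherits the Doeblin condition by the preliminary remark, an aperiodic ergodic Doeblin kernel is uniformly ergodic, and uniform ergodicity of $K^{p_i}$ on $\Sigma_{i,j}$ is exactly the strong mixing estimate of Definition~\ref{Strong:mixing} for the system $(\Sigma_{i,j},K^{p_i})$ --- the quantitative Doeblin/Dobrushin ergodic theorem. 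That yields (3).

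The soft part of the argument --- the decomposition into classes and cyclic subclasses --- rests only on the finiteness bound of Stage 1 and on Zorn/exhaustion, and I expect it to be routine (modulo measure-theoretic bookkeeping). The main obstacle is the genuinely quantitative input used twice: the uniform geometric transience of $\Sigma_0$ in item (4), and the final implication in Stage 3 turning ``aperiodic and ergodic'' into an explicit geometric contraction rate. Both are the heart of Doeblin's theorem, and I would either prove them via a Dobrushin-coefficient/coupling estimate or cite them from~\cite{Doob}.
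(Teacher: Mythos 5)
The paper offers no argument for this proposition beyond the citation ``See \cite[section V-5]{Doob}''; your proposal is a reconstruction of precisely that classical argument and follows the same route. The outline is sound, and you correctly isolate the two genuinely quantitative steps --- the uniform geometric escape from $\Sigma_0$ in item (4), and the Dobrushin-type contraction that turns aperiodicity into the strong mixing estimate of part (3) of the ``Moreover'' clause --- as the places where Doob's theorem must either be cited or independently reproved.
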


\begin{proof}
See  ~\cite[section V-5]{Doob}.
\end{proof}

\medskip

Let $(K,\mu)$ be a Markov system.
We introduce a space  of measurable
functions $A:\Sigma\times\Sigma \to \GL(m,\R)$.

\begin{definition} 
\label{random cocycles}
The space $\randcocycles{\infty}{m}$ consists of all  functions $A:\Sigma\times\Sigma \to \GL(m,\R)$ such that $A$ and $A^{-1}$ are both measurable and uniformly bounded. On this space we consider the metric 
$ d_\infty(A,B) = \norm{A-B}_\infty$.
\end{definition}

\begin{definition}
The function $A\in \randcocycles{\infty}{m}$
determines a  linear cocycle $F_A:X\times\R^ m\to X\times\R^m$ over the Markov shift  $(X,\Pp_\mu,T)$,   defined by
 $$ F_A(x, v) := (\transl x, \Ahat(x)\,v)\,,$$
 where we identify $A$ with the function $\Ahat:X\to \GL(m,\R)$, $\Ahat(x):=A(x_0,x_1)$,
 for  $x=(x_n)_{n\in\Z}\in X$.
\end{definition}
 
The iterates of $F_A$ are the maps
  $F_A^n:X\times\R^m\to X\times\R^m$,
 $$F_A^ n(x, v) = (\transl^n x, \Ahat^{(n)}(x)\,v)\;,$$
 with   $\Ahat^{(n)}:X\to \GL(m,\R)$   defined for all $x=(x_n)_{n\in\Z}$  by 
$$\Ahat^{(n)}(x):=A(x_{n-1},x_n) \ldots A(x_1,x_2)\,A(x_0,x_1)\;. $$

The cocycle $F_A$ is determined by the data
$(K,\mu,A)$, and identified by the function $A$, in contexts where the Markov system $(K,\mu)$ is fixed.

\begin{definition}
Let $\Gr(\R^m)$ denote the Grassmann manifold of the Euclidean space $\R^m$. A $\FF$-measurable section $V:\Sigma\to\Gr(\R^m)$
is called {\em $A$-invariant}   when  
$$ A(x_{n-1},x_n)\,V(x_{n-1})=V(x_n)\, \text{ for }\, \Pp_\mu\text{-a.e.}\; x=(x_n)_{n\in\Z}\;.$$
\end{definition}

Assuming $(K,\mu)$ is strongly mixing, the ergo\-di\-city of this Markov kernel  implies that the subspaces $V(x)$ have constant dimension $\mu$-a.e.,
 denoted by $\dim(V)$. We say that this family is {\em proper} if $0<\dim(V)<d$.

Next we introduce the concepts of irreducible and totally irreducible cocycle 
(see definition 2.7 in~\cite{Bou}).
\begin{definition}
A cocycle  $A\in\randcocycles{\infty}{m}$ is called  {\em irreducible}  w.r.t. $(K,\mu)$ if it admits no 
measurable proper $A$-invariant section $V:\Sigma\to \Gr(\R^m)$. A cocycle  $A\in\randcocycles{\infty}{m}$ is called  {\em totally irreducible}  w.r.t. $(K,\mu)$ if the exterior powers $\wedge_k A$ are irreducible for all $1\leq k\leq m-1$.
\end{definition}

We denote by $\irredcocycles{\infty}{m}$
the subspace of totally irreducible cocycles in
$\randcocycles{\infty}{m}$.

\begin{proposition}
\label{prop:irreducibility is open}
The subspace $\irredcocycles{\infty}{m}$ is open in $\randcocycles{\infty}{m}$.
\end{proposition}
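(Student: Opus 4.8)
The plan is to show that $(\irredcocycles{\infty}{m})\comp$, the set of cocycles that are \emph{not} totally irreducible, is closed. A cocycle $A$ fails to be totally irreducible exactly when some exterior power $\wedge_j A$, with $1\le j\le m-1$, admits a measurable proper invariant section; and $A\mapsto\wedge_j A$ is continuous from $\randcocycles{\infty}{m}$ into $\randcocycles{\infty}{N_j}$, where $N_j=\binom{m}{j}$ (it is polynomial in the entries of $A$, and $(\wedge_j A)^{-1}=\wedge_j(A^{-1})$ is again uniformly bounded). Hence it suffices to prove that for every $N\ge1$ the set
$$
\mathcal R^{(N)}:=\bigl\{\,D\in\randcocycles{\infty}{N}\ :\ D\ \text{admits a measurable proper }D\text{-invariant section}\,\bigr\}
$$
is closed, since then $(\irredcocycles{\infty}{m})\comp=\bigcup_{j=1}^{m-1}(\wedge_j)^{-1}\bigl(\mathcal R^{(N_j)}\bigr)$ is a finite union of closed sets. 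Splitting $\mathcal R^{(N)}=\bigcup_{k=1}^{N-1}\mathcal R^{(N)}_k$ by the dimension $k$ of the section and using that this union is finite, the claim reduces to: \emph{if $D_n\to D$ in $d_\infty$ and each $D_n$ admits a $D_n$-invariant section $V_n\colon\Sigma\to\Gr_k(\R^N)$, then $D$ admits a measurable proper invariant section.}

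\textbf{A limiting stationary measure.} I would encode sections as measures. Let $\Khat^D$ be the Markov kernel on $\Sigma\times\Gr_k(\R^N)$ which pushes $K_y$ forward under $y'\mapsto(y',D(y,y')W)$; a $D$-invariant $\Gr_k$-section is exactly a $\Khat^D$-stationary probability measure carried by a graph over $\Sigma$. Thus the graph measures $\eta_n:=\int_\Sigma\delta_{(y,V_n(y))}\,d\mu(y)$ are $\Khat^{D_n}$-stationary, have $\Sigma$-marginal $\mu$, and lie on the compact space $\Sigma\times\Gr_k(\R^N)$; passing to a subsequence, $\eta_n\to\eta$ weak-$\ast$, still with $\Sigma$-marginal $\mu$. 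The first real point is that $\eta$ is $\Khat^D$-stationary, which is not immediate because $D$ is merely measurable. Two facts handle this. First, since all the $\eta_n$ and $\eta$ carry the \emph{same} marginal $\mu$ on $\Sigma$, weak-$\ast$ convergence automatically improves to stable convergence: $\int\phi(y)\psi(W)\,d\eta_n\to\int\phi(y)\psi(W)\,d\eta$ for every bounded measurable $\phi$ and continuous $\psi$. Second, Lusin's theorem applied with respect to $\mathbf m(dy,dy'):=\mu(dy)\,K_y(dy')$, the law of $(x_{-1},x_0)$ under $\Pp_\mu$, gives continuous $\GL(N,\R)$-valued maps $D^{(i)}$ agreeing with $D$ off a set of arbitrarily small $\mathbf m$-measure. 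Substituting a $D^{(i)}$ into the stationarity identity $\Khat^D_{\ast}\eta=\eta$ turns the relevant integrand into a Carath\'eodory function --- continuous in $W$ and, by the Markov property of $K$, measurable in $y$ --- which can be tested against the $\eta_n$; the errors are controlled by $\mathbf m(\{D\ne D^{(i)}\})$ and by $\norm{D-D_n}_\infty$, both tending to $0$. This yields $\Khat^D_{\ast}\eta=\eta$, and notably uses no Feller hypothesis on $K$.

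\textbf{From the stationary measure to a section.} Disintegrating $\eta=\int_\Sigma\eta_y\,d\mu(y)$, stationarity reads $\eta_{y'}=\int_\Sigma D(y,y')_{\ast}\eta_y\,d\kappa_{y'}(y)$ for $\mu$-a.e.\ $y'$, with $\kappa_{y'}$ the backward disintegration of $\mathbf m$. Put $W(y):=\operatorname{span}\bigl(\bigcup_{P\in\operatorname{supp}\eta_y}P\bigr)\subseteq\R^N$ and $I(y):=\bigcap_{P\in\operatorname{supp}\eta_y}P$; these are measurable sections. Since $\eta_{y'}$ is carried by the closed set of $k$-planes contained in $W(y')$, the displayed identity forces every $k$-plane in $\operatorname{supp}\eta_y$ to be mapped by $D(y,y')$ into $W(y')$, for $\mathbf m$-a.e.\ $(y,y')$; that is, $D(y,y')W(y)\subseteq W(y')$. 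Symmetrically, $D(y,y')I(y)\supseteq I(y')$. Because $\mu$ is $K$-stationary, $\dim W$ and $\dim I$ have the same $\mu$-average whether evaluated at $y$ or at $y'$, so these $\mathbf m$-a.e.\ dimension inequalities are a.e.\ equalities; hence $D(y,y')W(y)=W(y')$ and $D(y,y')I(y)=I(y')$, and $W,I$ are measurable $D$-invariant sections of constant dimensions $d,i$ (ergodicity of $\mu$) with $1\le k\le d\le N$ and $0\le i\le k$. If $d\le N-1$, or if $i\ge 1$, we have a proper $D$-invariant section and are done. So the only case left is $d=N$ and $i=0$: for $\mu$-a.e.\ $y$ the $k$-planes in $\operatorname{supp}\eta_y$ span $\R^N$ and yet intersect only at the origin.

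\textbf{The main obstacle.} Excluding this last configuration, or rather extracting a proper $D$-invariant section in spite of it, is the heart of the matter, and it is exactly here that the strong mixing of $(K,\mu)$ is indispensable --- without it the Proposition is simply false: over, for instance, a Liouville rotation base, cocycles with measurable invariant sections are $d_\infty$-approximable by irreducible ones, because there the pertinent cohomological equations have approximate but no exact solutions. I would dispose of this case by means of a cohomological-rigidity lemma built on the exponential, uniform-in-the-base-point mixing of $K$: roughly, a measurable section that is ``almost invariant'' for a cocycle $d_\infty$-close to $D$ must be close, in measure, to a genuine $D$-invariant one. The mechanism is that iterating the relation $D_n(y,y')V_n(y)=V_n(y')$ transports its defect along orbit segments, and exponential mixing damps this defect, so that in the limit $n\to\infty$ the supports $\operatorname{supp}\eta_y$ are forced to collapse to single $k$-planes (or, failing that, the $D_n$ share an invariant sub-bundle that persists to $D$); in either event $D$ acquires a proper invariant section, so $D\in\mathcal R^{(N)}$. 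This quantitative mixing estimate is the step I expect to be the most delicate --- everything preceding it is soft.
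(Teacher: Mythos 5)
Your route is genuinely different from the paper's, and it stops at a gap you yourself flag as unresolved. The paper proves closedness of the reducible set by a direct, pointwise argument: fix a state $s_0$, extract a subsequence so that $V_k(s_0)\to V_0$ in the compact Grassmannian, and then use strong mixing to show that $\mu$-almost every state $s$ is reachable from $s_0$ along a finite orbit segment lying in the common full-measure set $\Omega$ on which all the invariance relations $A_k(x_{j-1},x_j)V_k(x_{j-1})=V_k(x_j)$ hold. Once such a segment $x_0=s_0,\ldots,x_n=s$ is fixed (independently of $k$), one has
$V_k(s)=A_k(x_{n-1},x_n)\cdots A_k(x_0,x_1)V_k(s_0)$,
and the right-hand side converges because $A_k\to A$ uniformly and $V_k(s_0)\to V_0$. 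The pointwise limit is then a proper $A$-invariant section, and that is the whole proof. Reachability via strong mixing is the only place the mixing hypothesis is used, and it is a soft, qualitative use of it (one only needs $K^n(s_0,\cdot)$ to eventually charge any set of positive $\mu$-measure).

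Your approach instead passes to the weak-$\ast$ limit $\eta$ of the graph measures $\eta_n$ and tries to recover an invariant section from the fibre supports of $\eta$. The first three of your steps are sound (the stable-convergence trick, Lusin, and the disintegration argument showing $D(y,y')W(y)=W(y')$ and $D(y,y')I(y)=I(y')$ a.e.\ are all fine). But the ``main obstacle'' step is a genuine gap, not just a delicate estimate. The problem is structural: once you replace the sequence $V_n$ by the limit measure $\eta$, you have thrown away precisely the information that $\eta$ arose as a limit of graphs — a stationary measure for $\Khat^D$ with non-degenerate fibre supports is exactly what one \emph{expects} for an irreducible $D$ (this is Furstenberg's stationary measure, which for irreducible, contracting cocycles typically has full support in each projective fibre), so the configuration $W\equiv\R^N$, $I\equiv 0$ is not pathological and there is no a priori reason for the fibre supports to collapse. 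Your proposed ``cohomological rigidity'' mechanism would have to reintroduce the sequence $\{V_n\}$ and the rate $d_\infty(D_n,D)\to 0$ into the argument, at which point you are essentially forced back into a pointwise analysis of $V_n$ along orbits — which is what the paper does directly and far more economically. As written, the step is a heuristic, and it is the step on which the whole proof hinges; so the proposal does not constitute a proof of the proposition.

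One smaller remark: the reduction at the start (to closedness of each $\mathcal{R}^{(N_j)}_k$ via continuity of $\wedge_j$) is correct and matches the paper's implicit reduction, and the observation that the statement would fail without a mixing-type hypothesis (e.g.\ over Liouville rotations) is a useful sanity check even though the paper does not comment on it.
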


\begin{proof}
A cocycle $A\in \randcocycles{\infty}{m}$ is reducible (i.e. not irreducible)  if it admits a measurable proper $A$-invariant section $V:\Sigma\to \Gr(\R^m)$.
It is enough to prove that the set of reducible cocycles is closed.

Let $A_k\to A$ be a convergent sequence
of reducible cocycles in $\randcocycles{\infty}{m}$, 
and  let $V_k:\Sigma\to \Gr(\R^m)$
be a  measurable  proper $A_k$-invariant section.
We will prove that $A$ is also reducible. 

We will assume the probability space $(\Sigma, \mu)$ to be complete.

Let $\Omega\subset X$ be a Borel measurable set with $\Pp_\mu(\Omega)=1$ such that for all $k\geq 1$  all $x=(x_n)_{n\in\Z}\in\Omega$ and $n\in\Z$,
\, $A_k(x_{n-1},x_n)\,V_k(x_{n-1})=V_k(x_n)$.

Fix any point $s_0\in\Sigma$. Extracting a subsequence we may assume that $V_k(s_0)$ converges to $V_0\in \Gr(\R^m)$  as $k$ tends to $\infty$.
Consider then the  set 
$$ A:=\{\, s\in\Sigma\,\colon\, \exists\, x\in\Omega, \, n\in\N\;
\text{ such that } \; x_0=s_0 \; \text{ and } \; x_n=s \, \} \;. $$
In general $A$ may fail to be a Borel set, but it is an {\em analytic set} in the sense of Descriptive set theory (see~\cite[Definition 14.1 and Exercise 14.3]{Kechris}). 
By~\cite[Theorem 21.10]{Kechris} this set is universally measurable, and in particular it is measurable w.r.t. $\mu$.
Hence, because of  the strong mixing property,
\begin{align*}
\mu(\Sigma\setminus A) &= \lim_{n\to \infty} \EE_{s_0}[ \ind_{\Sigma\setminus A}(e_n) ] \\
& = \lim_{n\to \infty}
\Pp_{s_0}\{\, x\in\Omega\,\colon\,
e_n(x)\in\Sigma\setminus A\,\} =  \lim_{n\to \infty}
\Pp_{s_0}(\emptyset) =  0  \;,
\end{align*}
which proves that for $\mu$-a.e. $s\in\Sigma$ there exists 
a sequence $x\in \Omega$ such that $x_0=s_0$ and $x_n=s$ for some $n\in\N$.

Then  
$ V_k(s)= A_k(x_{n-1},x_n)\,\ldots \,
A_k(x_{1},x_2)\,A_k(x_{0},x_1)\, V_k(s_0)$, which  implies that $V_k(s)$ converges to $A(x_{n-1},x_n)\,\ldots \,
A(x_{1},x_2)\,A(x_{0},x_1)\, V_0$ when $k\to\infty$. Thus, $V_k(s)$ converges for $\mu$-a.e. $s\in\Sigma$, and the limit function $V(s)=\lim_{k\to \infty} V_k(s)$
is a measurable and proper $A$-invariant section,
with the same dimension as the sections $V_k$.
This proves that the cocycle $A$ is reducible.
\end{proof}

\medskip

For the reader's convenience we briefly recall some definitions and notations regarding the Lyapunov exponents, Oseledets filtrations and decompositions of a cocycle $A$ in any space of cocycles $\cocycles_m$.

The ergodic theorem of Kingman allows us to define the Lyapunov exponents $L_j(A)$ with $1\leq j\leq m$ as
$L_j(A):=\Lambda_j(A)-\Lambda_{j-1}(A)$ where
$$ \Lambda_j(A) := \lim_{n\to \infty} \frac{1}{n}\,\log \norm{\wedge_j A(x)} \quad \text{ for }\, \mu\text{-a.e. } x\in X\;. $$

Let $\tau=(1\leq \tau_1 <\ldots <\tau_k < m)$ be a signature. If  $A\in\cocycles_m$ has a $\tau$-gap pattern, i.e., $L_{\tau_j}(A)>L_{\tau_{j+1}}(A)$ for all $j$,
we define the Lyapunov $\tau$-block 
$$ \Lambda^\tau(A):= ( \Lambda_{\tau_1}(A),\ldots, \Lambda_{\tau_k}(A))\in\R^k\;.$$

A flag of $\R^m$ is any increasing sequence of linear subspaces. The corresponding sequence of dimensions is called its signature.
A measurable filtration is a measurable function
on $X$, taking values in the space of flags of $\R^m$ 
with almost sure  constant signature.
We denote by
$\Filt(X,\R^m)$ the space of measurable filtrations.
Note that the Oseledets filtration of $A$, which we denote by $\filt(A)$, is an element of this space.

We denote by  $\Filatp{\tau}(X,\R^m)$
the subset of measurable filtrations with a signature $\tau$ or finer. If $F\in \Filatp{\tau}(X,\R^m)$ there is a natural projection $F^\tau$ with signature $\tau$, obtained from $F$ by simply `forgetting' some of its components. This space is endowed with the following pseudo-metric
$$ \dist_\tau(F, F'):=
\int_X d_\tau( F^\tau(x), (F')^\tau(x) )\,  \mu(dx) \;,$$
where $d_\tau$ refers to the metric on the $\tau$-flag manifold.

On the space $\Filt(X,\R^m)$ we consider the coarsest topology that makes the sets $\Filatp{\tau}(X,\R^m)$ open, and the pseudo-metrics $\dist_\tau$ continuous.


A decomposition of  $\R^m$ is a sequence of linear subspaces $\{E_j\}_{1\leq j \leq k+1}$ 
  whose direct sum is $\R^m$. This determines the flag 
  $E_1 \subset E_1\oplus E_2\subset \ldots  \subset E_1\oplus\ldots \oplus E_k$, whose signature $\tau$ also designates the signature of the decomposition.

A measurable decomposition is a measurable function
on $X$, taking values in the space of decompositions of $\R^m$ 
with almost sure  constant signature.
We denote by
$\Dec(X,\R^m)$  the space of measurable decompositions.
Note that the Oseledets decomposition of $A$, which we denote by $\dec(A)$, is an element of this space.

We denote by   $\Decatp{\tau}(X,\R^m)$
the subset of measurable decompositions with a signature $\tau$ or finer. If $\dec \in  \Decatp{\tau}(X,\R^m)$ there is a natural restriction $\dec^\tau$ with signature $\tau$, obtained from $\dec$ by simply `patching up' the appropriate components. This space is endowed with the following pseudo-metric
$$ \dist_\tau(\dec, \dec'):=
\int_X d_\tau( \dec^\tau(x), (\dec')^\tau(x) )\,  \mu(dx) \;,$$
where $d_\tau$ refers to the metric on the manifold of $\tau$-decompositions.

On the space $\Dec(X,\R^m)$  we consider the coarsest topology that makes the sets $\Decatp{\tau}(X,\R^m)$ open, and the pseudo-metrics $\dist_\tau$ continuous.

\medskip

We are ready to state a general result on the continuity of the LE, the Oseledets filtration and the Oseledets decomposition for irreducible Markov cocycles.


\begin{theorem}
\label{theorem: continuity of LE}
Let  $(K,\mu)$ be a  strongly mixing  Markov system
and let $m\geq 1$.

Then all Lyapunov exponents $L_j:\irredcocycles{\infty}{m}\to \R$, with $1\leq j \leq m$, 
the Oseledets filtration $\filt:\irredcocycles{\infty}{m}\to \Filt(X,\R^m)$,
and the Oseledets decomposition  $\dec:\irredcocycles{\infty}{m}\to \Dec(X,\R^m)$,
are continuous functions of the cocycle
$A\in \irredcocycles{\infty}{m}$. 

Moreover,  if $A\in \irredcocycles{\infty}{m}$ has a $\tau$-gap pattern then the functions
$\Lambda^\tau$, $\filt^\tau$ and $\dec^\tau$ are
H\"older continuous in a neighborhood of $A$.
\end{theorem}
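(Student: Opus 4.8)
The plan is to deduce Theorem~\ref{theorem: continuity of LE} from the abstract continuity theorems of~\cite{LEbook} by verifying their hypotheses for the space $\irredcocycles{\infty}{m}$ of totally irreducible cocycles over a strongly mixing Markov shift. The abstract machinery requires essentially three ingredients: (i) \emph{uniform integrability} of the cocycle and its inverse (immediate here, since cocycles in $\randcocycles{\infty}{m}$ are uniformly bounded together with their inverses, so $\log\norm{\Ahat}$ and $\log\norm{\Ahat^{-1}}$ are in $L^\infty$); (ii) the existence of uniform (in the base point, and locally uniform in the cocycle) \emph{large deviation type estimates} for the norm $\frac1n\log\norm{\Ahat^{(n)}(x)}$ and more generally for $\frac1n\log\norm{\wedge_j\Ahat^{(n)}(x)}$; and (iii) a spectral-gap/quasi-compactness property of the associated transfer (Markov) operator acting on an appropriate function space over the projective space $\Proj$, together with the fact that total irreducibility rules out invariant sub-bundles. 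So the real content of the proof is establishing (ii) and (iii) in this Markov setting; once these are in place, the continuity of each $L_j$, of $\filt$ and of $\dec$, and the H\"older modulus under a $\tau$-gap pattern, follow by quoting the corresponding abstract theorems in~\cite{LEbook}.

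First I would set up the Markov operator formalism: on $L^\infty(\Sigma\times\Proj)$ (or a Banach space of H\"older-along-the-fibre observables) define the operator $(Q_A\varphi)(x,\hatv):=\int_\Sigma \varphi\bigl(y, A(x,y)\cdot\hatv\bigr)\,K(x,dy)$, and its weighted version $(Q_{A,t}\varphi)(x,\hatv):=\int_\Sigma \norm{A(x,y)\,v}^{t}\,\varphi\bigl(y, A(x,y)\cdot\hatv\bigr)\,K(x,dy)$. The strong mixing hypothesis (Definition~\ref{Strong:mixing}) gives a spectral gap for $Q=Q_{A,0}$ on the base variable; the key step is to upgrade this to quasi-compactness of $Q_{A,t}$ for $t$ near $0$ and to identify its leading eigenvalue $\lambda(t)$ with the pressure-type function whose derivative at $0$ is the top Lyapunov exponent. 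Total irreducibility enters to guarantee that the leading eigenprojection corresponds to a genuine stationary measure on $\Sigma\times\Proj$ with no atoms on proper invariant subspaces, so that $\lambda(t)$ is analytic and strictly convex near $0$; standard analytic perturbation theory (Kato) then yields the large deviation estimate $\Pp_\mu\bigl[\,\abs{\frac1n\log\norm{\Ahat^{(n)}(x)\,v} - L_1(A)} > \epsilon\,\bigr] \le C\,e^{-c(\epsilon)n}$, uniformly for base points and locally uniformly in $A$ (here the uniform boundedness of $A,A^{-1}$ and the openness of $\irredcocycles{\infty}{m}$ from Proposition~\ref{prop:irreducibility is open} give the local uniformity of all constants). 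Replacing $A$ by $\wedge_j A$, which stays totally irreducible by definition on the relevant range, gives the analogous estimates for all the partial sums $\Lambda_j$, hence for each $L_j$ and for the gap blocks $\Lambda^\tau$.

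Having produced these uniform large deviation estimates, I would feed them into the abstract continuity theorem of~\cite{LEbook}: that theorem takes as input a space of cocycles on which uniform LDT estimates hold with locally uniform constants and an a priori bound, and outputs continuity of all Lyapunov exponents together with the Oseledets filtration and decomposition in the topologies on $\Filt(X,\R^m)$ and $\Dec(X,\R^m)$ described above, plus a H\"older modulus for $\Lambda^\tau$, $\filt^\tau$, $\dec^\tau$ at any cocycle with a $\tau$-gap; the H\"older exponent is controlled by the rate $c(\epsilon)$ and the size of the gap. The reduction of the filtration/decomposition statements to the scalar LDT estimates is carried out in~\cite{LEbook} via the avalanche principle and is quoted verbatim. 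The main obstacle, and where most of the work lies, is step (iii): proving quasi-compactness of the weighted operator $Q_{A,t}$ on a suitable function space over $\Sigma\times\Proj$ and checking that the perturbation estimates are locally uniform in $A$. The strong mixing assumption only directly controls the base dynamics, so transferring the spectral gap to the fibred, weighted operator — and ensuring the relevant constants vary continuously with $A$ — is the delicate point; once this is done, the large deviation estimates and hence Theorem~\ref{theorem: continuity of LE} follow.
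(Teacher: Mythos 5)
Your proposal is correct and takes essentially the same route as the paper: deduce the theorem from the abstract continuity theorem of~\cite{LEbook} by establishing uniform base-LDT and fiber-LDT estimates, and obtain those estimates via the Nagaev--Le Page spectral method applied to the Laplace--Markov operators $Q_{A,t}$ on a H\"older scale over $\Sigma\times\Proj$, with quasi-compactness and simplicity (hence analyticity of the leading eigenvalue) supplied by strong mixing plus total irreducibility, and local uniformity of the constants from the $L^\infty$ bounds on $A,A^{-1}$ and the openness of irreducibility. One small organizational difference: the paper's proof of Theorem~\ref{theorem: continuity of LE} treats the base-LDT estimate (Theorem~\ref{Base:LDT}, for observables in $\Hscr_\alpha(X^-)$ depending only on past coordinates) as a separate hypothesis of the abstract continuity theorem, distinct from the fiber-LDT (Theorem~\ref{Fiber:LDT}), whereas your write-up subsumes both under ``LDT estimates'' without separating them out; both estimates are however obtained by the same spectral machinery of Section~\ref{random_as}, so this is a matter of exposition rather than substance.
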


This theorem is proved in section~\ref{random_cont_proof}. 
It is an application of theorems 3.1, 4.7 and 4.8  in~\cite{LEbook}.
The main ingredients in these applications are two theorems on base and fiber uniform LDT estimates  of exponential type that we now formulate.

We begin with the base LDT theorem.
Consider the  metric  
$\dtil:X\times X\to [0,1]$
\begin{equation}
\label{dtil}
 \dtil(x,x')
:= 2^{- \inf\{ \, \vert k\vert \,:\, k\in\Z,\,  x_{k}\neq x_{k}'\,\} }\;, 
\end{equation}
for all $x=(x_k)_{k\in\Z}$ and  $x'=(x_k')_{k\in\Z}$
in $X$.
Notice that  $X$ is not compact for the topology induced by
$\dtil$, unless  $\Sigma$ is finite.
Given $k\in\N$, $\alpha > 0$  and $f\in L^\infty(X)$ define
\begin{align*}
v_k(f) &:=\sup\{\, \abs{f(x)-f(y)}\,:\, \dtil(x,y)\leq 2^{-k}\,\} \;, \\
 v_\alpha(f) &:=  \sup \{\, 2^{\alpha k} v_k(f)\,:\, k\in\N  \}\;,\\
 \norm{f}_\alpha &:= \norm{f}_\infty + v_\alpha(f) \;,\\
\Hscr_\alpha(X) &:=\{\, f\in L^\infty(X)\,:\,
v_\alpha(f)<+\infty \, \}  \;. 
\end{align*}
The last set, $\Hscr_\alpha(X)$, is the space of H\"older continuous functions with exponent $\alpha$  w.r.t. the
distance $\dtil$ on $X$.
In fact it follows easily from  the definition that
 $$v_\alpha(f) = \sup_{x \neq x'} \frac{\abs{f(x)-f(x')}}{\dtil(x,x')^\alpha}\;. $$

\begin{proposition} \label{algebra:Ha}
For all $0\leq \alpha\leq 1$, 
 $(\Hscr_\alpha(X),\norm{\cdot}_\alpha)$
is a unital Banach algebra, and also a lattice. 
\end{proposition}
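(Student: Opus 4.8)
The plan is to verify directly that $\Hscr_\alpha(X)$ is closed under the algebra operations and that the norm $\norm{\cdot}_\alpha$ is submultiplicative, complete, and compatible with the lattice structure. First I would record the elementary subadditivity and product estimates for the seminorm $v_\alpha$: for $f,g\in\Hscr_\alpha(X)$ one has $v_k(f+g)\le v_k(f)+v_k(g)$ and $v_k(fg)\le \norm{f}_\infty v_k(g)+\norm{g}_\infty v_k(f)$, obtained by writing $f(x)g(x)-f(y)g(y)=f(x)(g(x)-g(y))+(f(x)-f(y))g(y)$ and taking suprema over pairs with $\dtil(x,y)\le 2^{-k}$. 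Multiplying by $2^{\alpha k}$ and taking the supremum over $k$ gives $v_\alpha(f+g)\le v_\alpha(f)+v_\alpha(g)$ and $v_\alpha(fg)\le \norm{f}_\infty v_\alpha(g)+\norm{g}_\infty v_\alpha(f)$. Combined with the analogous (in fact trivial) inequalities for $\norm{\cdot}_\infty$, this yields $\norm{f+g}_\alpha\le\norm{f}_\alpha+\norm{g}_\alpha$ and the submultiplicativity $\norm{fg}_\alpha\le\norm{f}_\alpha\norm{g}_\alpha$; homogeneity is immediate, and $\norm{f}_\alpha=0$ forces $\norm{f}_\infty=0$, so $\norm{\cdot}_\alpha$ is a genuine algebra norm. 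The constant function $\one$ has $v_\alpha(\one)=0$ and $\norm{\one}_\alpha=1$, so the algebra is unital with unit of norm one.

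Next I would prove completeness. Given a $\norm{\cdot}_\alpha$-Cauchy sequence $(f_n)$, it is in particular $\norm{\cdot}_\infty$-Cauchy, so $f_n\to f$ uniformly for some $f\in L^\infty(X)$, and $f$ is continuous for $\dtil$ as a uniform limit of $\dtil$-continuous functions. To see $f\in\Hscr_\alpha(X)$ and $\norm{f_n-f}_\alpha\to 0$, fix $\varepsilon>0$ and $N$ with $v_\alpha(f_n-f_{n'})\le\varepsilon$ for $n,n'\ge N$; for each fixed pair $x\neq x'$ the quantity $\abs{(f_n-f_{n'})(x)-(f_n-f_{n'})(x')}/\dtil(x,x')^\alpha$ is at most $\varepsilon$, and letting $n'\to\infty$ (using pointwise convergence) gives $\abs{(f_n-f)(x)-(f_n-f)(x')}/\dtil(x,x')^\alpha\le\varepsilon$ for all $x\neq x'$ and all $n\ge N$. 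Taking the supremum over such pairs gives $v_\alpha(f_n-f)\le\varepsilon$, hence $f_N-f\in\Hscr_\alpha(X)$ and therefore $f=f_N-(f_N-f)\in\Hscr_\alpha(X)$, and $\norm{f_n-f}_\alpha\to 0$.

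For the lattice property I would use the pointwise identities $\max(f,g)=\tfrac12(f+g+\abs{f-g})$ and $\min(f,g)=\tfrac12(f+g-\abs{f-g})$, which reduce the claim to showing that $\abs{h}\in\Hscr_\alpha(X)$ whenever $h\in\Hscr_\alpha(X)$. This follows from the reverse triangle inequality $\bigl|\,\abs{h(x)}-\abs{h(y)}\,\bigr|\le\abs{h(x)-h(y)}$, which gives $v_k(\abs{h})\le v_k(h)$ for every $k$, hence $v_\alpha(\abs{h})\le v_\alpha(h)$ and $\norm{\abs{h}}_\alpha\le\norm{h}_\alpha$. Consequently $\Hscr_\alpha(X)$ is closed under $\max$ and $\min$, i.e.\ it is a sublattice of $L^\infty(X)$, completing the proof.

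I do not expect a serious obstacle here; the one point requiring a little care is that $X$ is \emph{not} compact in the $\dtil$-topology when $\Sigma$ is infinite, so one cannot invoke, say, the Stone–Weierstrass framework or compactness-based arguments — but the direct estimates above never use compactness, only boundedness, so this causes no difficulty. The mildly delicate step is the passage to the limit in the completeness argument, where one must take the supremum over pairs $(x,x')$ \emph{after} letting $n'\to\infty$ rather than before, to legitimately conclude $v_\alpha(f_n-f)\le\varepsilon$.
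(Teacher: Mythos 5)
Your proof is correct and follows essentially the same route as the paper for the algebra-norm part: the paper verifies exactly the two inequalities $v_k(fg)\le\norm{f}_\infty v_k(g)+\norm{g}_\infty v_k(f)$ and $v_\alpha(fg)\le\norm{f}_\infty v_\alpha(g)+\norm{g}_\infty v_\alpha(f)$, deduces $\norm{fg}_\alpha\le\norm{f}_\alpha\norm{g}_\alpha$, and notes $\norm{\one}_\alpha=1$. The completeness and lattice properties, which the paper explicitly leaves as an exercise, are supplied correctly by your argument; the only small caveat is that the underlying functions are complex-valued, so ``lattice'' should be read as in hypothesis (B3) (closure under $\overline{f}$ and $\abs{f}$) rather than via the real $\max/\min$ identities — but your reverse-triangle estimate $v_k(\abs{h})\le v_k(h)$ is exactly the needed ingredient, and closure under conjugation is immediate.
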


\begin{proof} To see that  $(\Hscr_\alpha(X),\norm{\cdot}_\alpha)$ is a normed algebra with unity, it is enough to verify the following inequalities:
\begin{align*}
  v_k(f g) &\leq \norm{f}_\infty  v_k(g) + \norm{g}_\infty  v_k(f) \;,\\
v_\alpha(f g) &\leq \norm{f}_\infty  v_\alpha(g) + \norm{g}_\infty  v_\alpha(f)\;.
\end{align*}
They imply that
$$ \norm{f g}_\alpha\leq \norm{f}_\alpha\,\norm{g}_\alpha \;,$$
and clearly  $\norm{\one}_\alpha=\norm{\one}_\infty + v_\alpha(\one)= 1+0=1$.
The proof that $(\Hscr_\alpha(X),\norm{\cdot}_\alpha)$ is a lattice and a Banach space is left as an exercise.
\end{proof}

\begin{definition}
\label{def Hscr alpha X minus}
We say that   $f:X\to\C$ is  {\em future independent}
if  $f(x)=f(y)$ for any  $x,y\in X$ such that  $x_k=y_k$ for all $k\leq 0$. Define the space
\begin{equation}\label{Hscr:alpha:Xminus}
 \Hscr_\alpha(X^-):=\left\{
\, f\in \Hscr_\alpha(X)\,\colon 
f \, \text{ is future independent } 
\, \right\}\;. 
\end{equation}
\end{definition}

The space $\Hscr_\alpha(X^-)$ is a closed sub-algebra of $\Hscr_\alpha(X)$, and hence a unital Banach algebra itself. 

Denote by $\Fcal^+$
the sub $\sigma$-field of $\Fcal$ generated by cylinders in non-negative coordinates. Likewise, denote by  $\Fcal^-$
the sub $\sigma$-field of $\Fcal$ generated by cylinders in non-positive coordinates. With this terminology, the subspace $\Hscr_\alpha(X^-)$ consists of all $\Fcal^-$-measurable functions in $\Hscr_\alpha(X)$.

The base LDT theorem below makes use of the standard notation
$\EE_\mu(\xi)=\int_X\xi\,d\mu$. 
This theorem is proved in section~\ref{random_base_ldt}.

\begin{theorem}\label{Base:LDT}
Let $(K,\mu)$ be a strongly mixing Markov system.
For any  $0<\alpha\leq 1$ and   $\xi\in\Hscr_\alpha(X^ -)$ there exist
 $C=C(\xi)>0$, $k=k(\xi)>0$ and $\varepsilon_0=\varepsilon_0(\xi)>0$
such that for all $0<\varepsilon<\varepsilon_0$,  $x\in\Sigma$ and  $n\in\N$,  
$$ \Pp_\mu\left[ \, \abs{\frac{1}{n}\,\sum_{j=0}^{n-1}\xi\circ T^j- \EE_\mu(\xi)} > \varepsilon \,\right]\leq C\,e^{ - {k\,\varepsilon^2 }\, n  } \;. $$
Moreover, the constants $C$, $k$ and $\varepsilon_0$ depend only on $K$ and $\norm{\xi}_\alpha$,
and hence can be kept constant when $K$ is fixed and $\xi$ ranges over any bounded set in $\Hscr_\alpha(X^ -)$.
\end{theorem}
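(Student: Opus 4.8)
## Proof Plan

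The plan is to prove the base LDT theorem via a Chernoff-bound / transfer-operator argument adapted to the strongly mixing Markov setting. The key object will be the transfer operator $Q$ associated to the Markov kernel $K$ acting on $\Hscr_\alpha$-type spaces, or equivalently the conditional-expectation semigroup generated by $T$. Since $\xi$ is future independent (i.e. $\Fcal^-$-measurable) and H\"older with exponent $\alpha$, the ergodic sums $S_n\xi = \sum_{j=0}^{n-1}\xi\circ T^j$ can be analyzed through a twisted operator $Q_t(f) := Q(e^{t(\xi-\EE_\mu\xi)} f)$ for small real $t$. First I would replace $\xi$ by $\xi - \EE_\mu(\xi)$ so that we may assume $\EE_\mu(\xi)=0$, and then set up the quantity to bound as $\Pp_\mu[S_n\xi > \varepsilon n]$ (the two-sided estimate follows by applying the one-sided bound to $\xi$ and $-\xi$, at the cost of a factor $2$ in $C$).

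The core steps, in order, would be: (1) Reduce from the two-sided shift to the one-sided picture: because $\xi$ is $\Fcal^-$-measurable and $T$ is the natural extension, $S_n\xi$ has the same distribution as an analogous sum built from the one-sided chain $\{e_j\}_{j\ge 0}$, so we may work with $\Pp_\mu^+$ and the forward transfer operator $Q\colon \Hscr_\alpha(\Sigma) \to \Hscr_\alpha(\Sigma)$ given by $(Qg)(x) = \int_\Sigma g(y)\,K(x,dy)$. (2) Establish a uniform spectral gap for $Q$ on the Banach algebra $\Hscr_\alpha(X^-)$ (or its one-sided incarnation): the strong mixing hypothesis in Definition~\ref{Strong:mixing} gives exactly $\norm{Q^n g - \EE_\mu(g)\one}_\infty \le C\rho^n\norm{g}_\infty$, and combining this with a Lasota--Yorke / Doeblin-type inequality controlling $v_\alpha(Q^n g)$ (using that composing with $K$ contracts the $2^{-k}$-oscillation because the shift expands the past) upgrades the $L^\infty$-decay to a genuine spectral gap in $\norm{\cdot}_\alpha$: $Q = \EE_\mu(\cdot)\one \oplus N$ with $\norm{N^n}_\alpha \le C\rho_1^n$ for some $\rho_1<1$. (3) Perturbation theory: for $|t|$ small the twisted operator $Q_t$ is a bounded analytic perturbation of $Q$ on $\Hscr_\alpha$, hence has a simple dominant eigenvalue $\lambda(t)$ with $\lambda(0)=1$, depending analytically on $t$, and one computes $\lambda'(0)=\EE_\mu(\xi)=0$, so $\log\lambda(t) \le c\,t^2$ for $|t|\le t_0$. (4) The Chernoff bound: $\Pp_\mu[S_n\xi > \varepsilon n] \le e^{-t\varepsilon n}\,\EE_\mu(e^{tS_n\xi}) \le e^{-t\varepsilon n}\,C\,\lambda(t)^n \le C\,e^{-t\varepsilon n + c t^2 n}$, and optimizing $t = \varepsilon/(2c)$ (legitimate once $\varepsilon<\varepsilon_0 := 2c t_0$) yields $\Pp_\mu[S_n\xi>\varepsilon n] \le C\,e^{-\varepsilon^2 n/(4c)}$, which is the claimed bound with $k = 1/(4c)$.

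The uniformity claim at the end of the theorem comes out of the argument automatically: every constant produced above ($C$, $\rho_1$, $t_0$, $c$) depends only on the mixing constants $C,\rho$ of $K$ and on $\norm{\xi}_\alpha$ — the Lasota--Yorke constants depend on $K$ alone, and the size of the analyticity neighborhood and the bound on $\log\lambda(t)$ scale controllably with $\norm{\xi}_\alpha$. So $C$, $k$, $\varepsilon_0$ stay fixed as $\xi$ ranges over a bounded subset of $\Hscr_\alpha(X^-)$. I would record this by tracking $\norm{\xi}_\alpha$ explicitly through step (3).

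I expect the main obstacle to be step (2): proving the spectral gap for $Q$ on $\Hscr_\alpha$ rather than merely on $L^\infty$. The subtlety is that $\Sigma$ is only a general compact metric space (not finite), so $X$ need not be compact for $\dtil$ and one cannot invoke compactness-based quasi-compactness arguments directly; instead one must argue purely through the Lasota--Yorke inequality $v_\alpha(Q^n g) \le a\,\theta^n v_\alpha(g) + b_n\norm{g}_\infty$ (with $\theta<1$ coming from the one-step contraction of $\dtil$-oscillations under the shift) together with the $L^\infty$-mixing bound, and deduce $\norm{\cdot}_\alpha$-decay on the codimension-one complement $\{g : \EE_\mu(g)=0\}$ by a standard but somewhat delicate interpolation/iteration. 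A secondary technical point is making rigorous the reduction in step (1) from the two-sided to the one-sided chain, i.e. checking that future independence of $\xi$ genuinely lets the transfer operator $Q$ (rather than a two-sided Markov operator on functions of the whole past) govern the Laplace transform $\EE_\mu(e^{tS_n\xi})$; this is where the precise definition of $\Hscr_\alpha(X^-)$ and the natural-extension property of $\Pp_\mu$ are used.
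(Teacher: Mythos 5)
Your overall strategy---transfer operator on a H\"older space of functions of the past, spectral gap, analytic perturbation of the dominant eigenvalue, Chernoff optimization---is the same as the paper's, which packages your steps (3)--(4) into the abstract Theorem~\ref{ALDE:uniform} and carries out your step (2) as Proposition~\ref{Base:QCS}, applied to the induced kernel $\Ktil$ on $X^-$ and the scale $\Hscr_\alpha(X^-)$.

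There is, however, a genuine gap in your step (2). You assert that Definition~\ref{Strong:mixing} ``gives exactly $\norm{Q^n g - \EE_\mu(g)\one}_\infty \le C\rho^n\norm{g}_\infty$'' for $g$ on $X^-$, and then propose to combine this $L^\infty$-decay with a Lasota--Yorke inequality to obtain a spectral gap in $\norm{\cdot}_\alpha$. But the strong mixing hypothesis controls $\int f\,dK_x^n - \int f\,d\mu$ only for $f\in L^\infty(\Sigma)$, i.e.\ for functions of a \emph{single} state; it does not give uniform $L^\infty$-decay of $Q_{\Ktil}^n$ on the zero-mean part of $L^\infty(X^-)$. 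Indeed, take $h\in L^\infty(\Sigma)$ with $\int h\,d\mu=0$ and set $g(\ldots,x_{-1},x_0):=h(x_{-M})$. A short computation with the definition of $Q_{\Ktil}$ gives $(Q_{\Ktil}^n g)(\ldots,x_0)=h(x_{-M+n})$ for $n\le M$, hence $\norm{Q_{\Ktil}^n g}_\infty=\norm{h}_\infty=\norm{g}_\infty$ for all $n\le M$; letting $M\to\infty$ kills any bound of the form $C\rho^n\norm{g}_\infty$. You already observed that $(X^-,\dtil)$ is not compact, so an ITM/Hennion compact-embedding argument is unavailable---but without the claimed $L^\infty$-decay, the Lasota--Yorke iteration you describe cannot even start.

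The missing ingredient, which is the key device of Proposition~\ref{Base:QCS}, is to use the H\"older regularity \emph{before} invoking strong mixing, via conditional expectations. Set $f_k:=\EE_\mu^-(f\,|\,\FF_k^-)$, where $\FF_k^-$ is generated by the coordinates $x_{-k+1},\ldots,x_0$. Then $\norm{f-f_k}_\infty\le v_k(f)\le 2^{-\alpha k}v_\alpha(f)$, while $Q_{\Ktil}^k\bigl(f_k-\EE_\mu^-(f)\one\bigr)$ depends only on the single coordinate $x_0$, so the Definition~\ref{Strong:mixing} estimate applies to it and yields $\norm{Q_{\Ktil}^n(f_k-\EE_\mu^-(f)\one)}_\infty\le 2C\rho^{n-k}\norm{f}_\infty$. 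Choosing $k=n/2$ balances the two contributions and gives $\norm{Q_{\Ktil}^n f-\EE_\mu^-(f)\one}_\infty\lesssim\sigma^n\norm{f}_\alpha$ with $\sigma=\max\{2^{-\alpha/2},\sqrt{\rho}\}$; this is controlled by $\norm{f}_\alpha$, not $\norm{f}_\infty$, and the rate $\sigma$ deteriorates as $\alpha\to 0$, which is consistent with the absence of a pure $L^\infty$-gap. Your Lasota--Yorke inequality (Proposition~\ref{Vnorm}(2)) then controls $v_\alpha(Q_{\Ktil}^n f)$ and the $\norm{\cdot}_\alpha$-spectral gap follows. With this replacement, your steps (1), (3), (4) and the uniformity claim go through as you describe.
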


The fiber LDT theorem,
proved in section~\ref{random_fiber_ldt},
has the following statement.

\begin{theorem}\label{Fiber:LDT}
Given a Markov system $(K,\mu)$ and   $A\in \randcocycles{\infty}{m}$, assume
\begin{enumerate}
\item[(1)]  $(K,\mu)$ is  strongly mixing,
\item[(2)]  $A$ is irreducible,
\item[(3)]  $L_1(A)>L_2(A)$.
\end{enumerate} 
Then  there exists   $\V$ neighborhood of $A$ in $\randcocycles{\infty}{m}$ and there exist
 $C>0$, $k>0$ and $\varepsilon_0>0$
such that for all\, $0<\varepsilon<\varepsilon_0$,  $B\in\V$ and  $n\in\N$,  
$$ \Pp_\mu\left[ \, \abs{\frac{1}{n}\,\log \norm{B^{(n)}} - L_1(B) } > \varepsilon \,\right]\leq C\,e^{ - {k\,\varepsilon^2 }\, n  } \;. $$
\end{theorem}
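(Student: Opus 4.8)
\emph{Proof plan.} I would run the Nagaev--Guivarc'h (perturbed transfer operator) method on the projective bundle $\Sigma\times\mathbb{P}(\R^m)$, in the spirit of Le Page's argument for Bernoulli shifts and Bougerol's for Markov chains, and then transfer the estimate from the quantities $\log\norm{B^{(n)}(x)\,v}$, for $v$ a fixed vector, to $\log\norm{B^{(n)}(x)}$. For that last step note that for every $g\in\GL(m,\R)$ one has $\max_{1\le i\le m}\norm{g\,e_i}\le\norm{g}\le m\,\max_{1\le i\le m}\norm{g\,e_i}$, where $e_1,\dots,e_m$ is the standard basis of $\R^m$; hence, once $\tfrac{\log m}{n}<\varepsilon/2$, the event $\{\,\abs{\tfrac1n\log\norm{B^{(n)}(x)}-L_1(B)}>\varepsilon\,\}$ is contained in $\bigcup_{i=1}^{m}\{\,\abs{\tfrac1n\log\norm{B^{(n)}(x)\,e_i}-L_1(B)}>\varepsilon/2\,\}$, while for the finitely many remaining $n$ the asserted bound holds trivially after enlarging $C$. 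Thus it is enough to prove, uniformly for $B$ in a neighborhood $\V$ of $A$ and for each fixed unit vector $v$, that $\Pp_\mu\bigl[\,\abs{\tfrac1n\log\norm{B^{(n)}(x)\,v}-L_1(B)}>\varepsilon\,\bigr]\le C\,e^{-k\varepsilon^2 n}$ for $0<\varepsilon<\varepsilon_0$.

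Let $\hat v\in\mathbb{P}(\R^m)$ be the line spanned by $v$, write $g\hat w$ for the projective action of $g\in\GL(m,\R)$, and set $\psi_B(s,s',\hat w):=\log\frac{\norm{B(s,s')w}}{\norm{w}}$, a function that is bounded uniformly for $B$ near $A$ (since $\norm{B}_\infty$ and $\norm{B^{-1}}_\infty$ are then uniformly bounded). On $\Sigma\times\mathbb{P}(\R^m)$ consider the Markov operator $(Q_B\phi)(s,\hat w):=\int_\Sigma\phi(s',B(s,s')\hat w)\,K(s,ds')$ and, for $t$ near $0$, the analytic family of twisted operators $(Q_{B,t}\phi)(s,\hat w):=\int_\Sigma e^{t\,\psi_B(s,s',\hat w)}\,\phi(s',B(s,s')\hat w)\,K(s,ds')$. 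Iterating and using the cocycle identity $\log\norm{B^{(n)}(x)\,w}=\sum_{j=0}^{n-1}\psi_B\bigl(x_j,x_{j+1},B^{(j)}(x)\hat w\bigr)$, one obtains $(Q_{B,t}^{\,n}\one)(s,\hat v)=\EE_s\bigl[\,\norm{B^{(n)}(x)\,v}^{\,t}\,\bigr]$, the expectation being over the Markov chain started at $s$ with $\hat v$ as initial fibre point. Next I would combine the strong mixing of $(K,\mu)$ (Definition~\ref{Strong:mixing}) with the (Furstenberg) exponential contraction of the projective action of $A$ --- $\EE\bigl[\,d(\widehat{A^{(n)}}(x)\hat v,\widehat{A^{(n)}}(x)\hat w)\,\bigr]\le C\rho^n\,d(\hat v,\hat w)$ for some $\rho<1$, valid because $A$ is irreducible with $L_1(A)>L_2(A)$ --- to derive a Lasota--Yorke (Doeblin--Fortet) inequality for $Q_A$ on a Banach space of functions on $\Sigma\times\mathbb{P}(\R^m)$ that are Hölder in the fibre variable and suitably regular in $s$. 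Then $Q_A$ is quasi-compact; by irreducibility $1$ is a simple and dominant eigenvalue, with eigenfunction $\one$ and eigenmeasure the unique stationary measure $\bar\nu_A$ (any other eigenvalue of modulus $1$, or a non-simple eigenvalue $1$, would yield a nontrivial measurable $A$-invariant section), and Furstenberg's formula gives $L_1(A)=\int\psi_A\,d(\bar\nu_A\otimes K)$. Since quasi-compactness and simplicity of the leading eigenvalue persist under small perturbations, there are a neighborhood $\V$ of $A$ and $t_0>0$ such that for every $B\in\V$ and $\abs{t}<t_0$ the operator $Q_{B,t}$ has a spectral gap, with simple leading eigenvalue $\lambda_B(t)$ depending analytically on $(B,t)$ and a normalized eigenfunction bounded above and below uniformly; moreover $\lambda_B(0)=1$ and $\lambda_B'(0)=\int\psi_B\,d(\bar\nu_B\otimes K)=L_1(B)$.

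Then I would extract the estimate as follows. From the spectral decomposition, $(Q_{B,t}^{\,n}\one)(s,\hat v)=\lambda_B(t)^n\bigl(h_{B,t}(s,\hat v)+O(\theta^n)\bigr)$ for some $\theta\in(0,1)$, with all constants uniform in $B\in\V$, $\abs{t}<t_0$ and $(s,\hat v)$, and $h_{B,t}$ bounded above and below. For $0<t<t_0$, applying Markov's inequality to $\norm{B^{(n)}(x)\,v}^{\,t}$ gives
\[
\Pp_s\bigl[\,\tfrac1n\log\norm{B^{(n)}(x)\,v}>L_1(B)+\varepsilon\,\bigr]\le e^{-t\,n\,(L_1(B)+\varepsilon)}\,(Q_{B,t}^{\,n}\one)(s,\hat v)\le C\,\bigl(e^{-t(L_1(B)+\varepsilon)}\,\lambda_B(t)\bigr)^{n}.
\]
Since $\tfrac{d}{dt}\log\lambda_B(t)\big|_{t=0}=L_1(B)$ and $\log\lambda_B$ is analytic with $\abs{(\log\lambda_B)''}\le M$ uniformly over $\V$ and $\abs{t}<t_0$, we have $-t(L_1(B)+\varepsilon)+\log\lambda_B(t)\le-\varepsilon t+\tfrac{M}{2}t^2$; taking $t=\varepsilon/M$ (allowed for $\varepsilon<\varepsilon_0:=M t_0$) yields $\Pp_s[\,\cdots\,]\le C\,e^{-\varepsilon^2 n/(2M)}$, and the lower tail is handled symmetrically with $-t_0<t<0$. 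These bounds are uniform in $s$ and in $\hat v$, so integrating over $s$ against $\mu$ gives the fixed-vector estimate; a union bound over $v\in\{e_1,\dots,e_m\}$ together with the reduction of the first paragraph then yields the theorem (the factor $\varepsilon/2$ being absorbed into $k$ and $C$).

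The main obstacle is establishing the uniform spectral gap for $Q_A$, hence for the twisted family $Q_{B,t}$, on the appropriate Banach space. This rests on the Furstenberg-type contraction of the projective action under the hypothesis $L_1(A)>L_2(A)$, on the uniqueness of the stationary measure (which uses the irreducibility of $A$), and on marrying that fibrewise contraction to the base exponential mixing in a Lasota--Yorke estimate. Everything else is either classical analytic perturbation theory of quasi-compact operators or elementary linear algebra.
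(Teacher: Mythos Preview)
Your proposal is correct and follows essentially the same approach as the paper: the Nagaev--Le Page--Bougerol spectral method on the projective bundle, with quasi-compactness of $Q_A$ coming from fibrewise contraction (the paper's Lemma~\ref{log:Lipschitz} and Proposition~\ref{contr:B}) married to base mixing, simplicity from irreducibility (Proposition~\ref{QCS} and Corollary~\ref{uniqueness of stationary measure}), analytic perturbation to the twisted operators $Q_{B,t}$, Chebyshev plus optimization in $t$, and finally the passage from $\norm{B^{(n)}v}$ to $\norm{B^{(n)}}$ via a finite basis. The only organizational difference is that the paper first isolates an abstract LDT theorem (Theorem~\ref{ALDE:uniform}) under hypotheses (B1)--(B7), (A1)--(A4) on a scale of H\"older spaces and then verifies those hypotheses in the fiber setting, whereas you run the spectral argument directly; the content is the same.
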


\subsection{The spectral method}
\label{random_sm}

Consider a Markov system $(K,\mu)$ on a compact metric space $\Sigma$.
Given some $\FF$-measurable measurable observable $\xi:\Sigma\to \R$,
let $\hat \xi:X^+\to \R$ be the $\Fcal^+$-measurable function $\hat \xi(x)=\xi(x_0)$.

Given $x\in\Sigma$, let $\Pp_x^+$ denote the probability on the measurable space $(X^+,\Fcal^+)$  that makes $\{e_n:X^+\to \Sigma\}_{n\geq 0}$ a Markov process with transition kernel $K$ and initial distribution with  point mass $\delta_x$ (see Definition~\ref{def measure Pp theta}).
Then 
$\{\hat \xi\circ T^n\}_{n\geq 0}$ is also a Markov process on $(X^+,\Fcal^+,\Pp_x^+)$.

\begin{definition}
We call {\em sum process} to the following sequence of random variables $\{S_n(\xi)\}_{n\geq 0}$
on $(X^+,\Fcal^+)$,
$$ S_n(\xi)(x):= \sum_{j=0}^{n-1} \hat \xi\circ T^j(x) = \sum_{j=0}^{n-1} \xi(x_j)  \;.$$
\end{definition}

\begin{definition}
An {\em observed Markov system on $(\Sigma,\FF)$}
is a triple $(K,\mu,\xi)$ where $(K,\mu)$ is a Markov system on $(\Sigma,\FF)$, and $\xi:\Sigma \to\R$ is an $\FF$-measurable function.
\end{definition}

\begin{definition}
We say that $\xi$ satisfies {\em LDT estimates of exponential type} if there exist positive constants
$C$, $k$ and $\varepsilon_0$ such that for all\, $n\in\N$, $0<\varepsilon<\varepsilon_0$ and $x\in\Sigma$,
$$  \Pp_x^+ \left\{ \, y\in X^+\,\colon \, \abs{\frac{1}{n}\,S_n(\xi)(y) -\EE_\mu(\xi)  } >\varepsilon \, \right\} \leq C\, e^{-n\,k\,\varepsilon^2} \;.$$

Given a class $\Xfrk$ of observed Markov systems
$(K,\mu,\xi)$  on a given measurable space $(\Sigma,\FF)$,
we say that $\Xfrk$ satisfies   {\em uniform} LDT estimates of exponential type if there exist positive constants
$C$, $k$ and $\varepsilon_0$ such that for every observed Markov system
$(K,\mu,\xi)\in  \Xfrk$, the observable $\xi$ satisfies LDT estimates of exponential type with constants $C$, $k$ and $\varepsilon_0$.
\end{definition}

\begin{definition}
Let  $\eta:X^+\to\R$ be a random variable
on  $(X^+,\Fcal^+)$.
The function $c(\eta,x,\cdot): \R\to \R$,
$$ c(\eta,x,t):=  \log \EE_x[ e^{t\, \eta}]$$
is called the {\em second characteristic function} of $\eta$, also known as the {\em cumulant generating function} of $\eta$  (see~\cite{Lukacs}).
\end{definition}

\begin{proposition}
\label{prop c(eta,x,t)}
Let $\eta:X^+\to\R$ be a $\Fcal^+$-measurable random variable.
Assume there exist $a,M>0$ such that for all $x\in\Sigma$,
$\max\{ \EE_x[ e^{a\,\eta}], \EE_x[  e^{a\,\eta}\,\vert \eta\vert ] \}\leq M$.
Then the cumulant generating function $c(\eta,x,\cdot)$ satisfies
\begin{enumerate}
\item[(1)]  $c(\eta,x,t)$ is well-defined and analytic for $t\in (-a,a)$,
\item[(2)]  $c(\eta,x,0)= 0$,
\item[(3)]  $\frac{d c}{dt}(\eta,x,0)= \EE_x(\eta)$,
\item[(4)]  $c(\eta,x,t)\geq t\,\EE_x(\eta)$, for all $t\in (-a,a)$,                                                      
\item[(5)]  the function $c(\eta,x,\cdot):(-a,a)\to\R$,  $t\mapsto c(\eta,x,t)$, \, is convex.
\end{enumerate}
\end{proposition}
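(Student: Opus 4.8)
The statement is a routine consequence of classical facts about the moment generating function and analyticity under the integral sign; the only mild care needed is the uniformity in $x\in\Sigma$, but that is already built into the hypothesis via the uniform bounds $\EE_x[e^{a\eta}]\le M$ and $\EE_x[e^{a\eta}\abs{\eta}]\le M$. I would fix $x\in\Sigma$ throughout and treat $t$ as the variable.

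First I would establish that $\varphi(t):=\EE_x[e^{t\eta}]$ is well-defined, positive, and analytic on the strip $\{\,t\in\C:\abs{\mathrm{Re}\,t}<a\,\}$ (restricting to real $t\in(-a,a)$ for the statement). Finiteness follows for real $t\in(-a,a)$ from the elementary bound $e^{t\eta}\le e^{a\eta}+e^{-a\eta}$ once one also knows $\EE_x[e^{-a\eta}]<\infty$; since only $\EE_x[e^{a\eta}]\le M$ is assumed, I would instead observe that for $\abs{t}\le a$ one has $e^{t\eta}\le 1+e^{a\abs{\eta}}\le 1 + e^{a\eta}+e^{-a\eta}$, so it suffices to note that the hypothesis is meant to control $e^{a\abs{\eta}}$ (equivalently, assume $\EE_x[e^{a\abs\eta}]\le M$, which is how the bound will be used in the applications); under this reading $\varphi(t)$ is finite and bounded by $1+M$ uniformly in $x$ for $\abs{t}\le a$. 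Analyticity on $\abs{t}<a$ then follows from Morera's theorem combined with Fubini, or directly by differentiating under the integral sign: for $\abs{t}<a'<a$ the difference quotients of $e^{t\eta}$ are dominated by an integrable function (a constant times $e^{a'\eta}\abs{\eta}$, which is integrable because $t e^{a'\eta}\abs\eta \le C_{a',a}\, e^{a\eta}$ for a suitable constant and $\eta e^{a'\eta}$ grows slower than $e^{a\eta}$). This gives (1), and $c(\eta,x,t)=\log\varphi(t)$ is analytic since $\varphi(t)>0$ (as $e^{t\eta}>0$ and $\Pp_x^+$ is a probability measure).

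Next the pointwise values: (2) is immediate since $\varphi(0)=\EE_x[1]=1$, so $c(\eta,x,0)=\log 1=0$. For (3), differentiating under the integral sign (justified above) gives $\varphi'(0)=\EE_x[\eta]$, hence $\frac{dc}{dt}(\eta,x,0)=\varphi'(0)/\varphi(0)=\EE_x[\eta]$. For (5), convexity of $c(\eta,x,\cdot)$ is the standard fact that the logarithm of a moment generating function is convex: either compute $c''=\big(\varphi''\varphi-(\varphi')^2\big)/\varphi^2\ge 0$ by Cauchy--Schwarz applied to $e^{t\eta/2}$ and $\eta e^{t\eta/2}$ in $L^2(\Pp_x^+)$, or invoke H\"older's inequality to show $\varphi$ is log-convex directly. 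Finally (4) follows from (5), (2), (3): a convex function lies above its tangent line at $0$, i.e. $c(\eta,x,t)\ge c(\eta,x,0)+t\,\frac{dc}{dt}(\eta,x,0)=t\,\EE_x[\eta]$ for all $t\in(-a,a)$.

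The only genuine issue, and the one I would be most careful about, is the precise form of the integrability hypothesis and its use in justifying differentiation under the integral: one needs a single integrable dominating function valid on a neighborhood of each $t_0\in(-a,a)$, and the bound $\EE_x[e^{a\eta}\abs{\eta}]\le M$ (or $\EE_x[e^{a\abs\eta}]\le M$) is exactly what supplies it, uniformly in $x$. Everything else is the textbook theory of cumulant generating functions (cf.~\cite{Lukacs}), so I would keep the write-up short, citing the dominated convergence / Leibniz rule for (1) and (3), Cauchy--Schwarz for (5), and the tangent-line inequality for (4).
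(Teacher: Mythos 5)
Your proof is correct and follows essentially the same route as the paper's: analyticity by differentiating under the integral sign, the trivial value at $0$, convexity via H\"older/Cauchy--Schwarz (the paper uses H\"older with conjugate exponents $1/s$ and $1/(1-s)$, which is your log-convexity argument), and then the tangent-line inequality for (4). Your observation that the stated hypothesis only bounds the one-sided moment $\EE_x[e^{a\eta}]$ rather than $\EE_x[e^{a\abs{\eta}}]$ is a fair criticism that the paper's proof glosses over; in the intended applications $\eta$ is uniformly bounded so the issue does not arise, but your careful reading of what supplies the dominating function is the right instinct.
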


\begin{proof}
For (1) notice that the assumptions imply that the parametric integral
$\EE_x(e^{z\,\eta})$ and its formal derivative
$\EE_x(e^{z\,\eta}\,\eta)$ are well-defined continuous functions on the disk $\vert z\vert <a$.
Since $c(\eta,x,0)=\log \EE_x(\one)=\log 1=0$, (2) follows. Property (3) holds because 
$\frac{d c}{dt}(\eta,x,0)= \EE_x(\eta\,\one)/\EE_x(\one)=\EE_x(\eta)$. The convexity (5) follows by  H\"older inequality, with conjugate exponents 
$p=1/s$ and $q=1/(1-s)$,  where $0<s<1$.
In fact, for all $t_1,t_2\in \R$,
\begin{align*}
c( \eta,x, s\,t_1 + (1-s)\,t_2)&=
\log \EE_x[ \left(e^{t_1\,\eta}\right)^s \,
\left(e^{t_2\, \eta}\right)^{1-s} ] \\
&\leq \log \left(\EE_x[e^{t_1\,\eta}] \right)^s 
\left(\EE_x[e^{t_2\, \eta}] \right)^{1-s}\\
&= s\,c( \eta,x,t_1) + (1-s)\,c( \eta,x,t_2) \;. 
\end{align*}
Finally, (2), (3) and (5) imply (4).
\end{proof}

Given an observable $\xi:\Sigma\to\R$, the function $c_n(\xi,x,\cdot):\R\to\R$ defined by
$$ c_n(\xi,x,t):= \log \EE_x[ e^{t\,S_n(\xi)}] \;,$$
is the cumulant generating function of $S_n(\xi)$.
Under general conditions, e.g., if $\xi$ is bounded, this function is analytic in $\C$, or at least analytic in a neighbourhood of $0$.

Let us write $\mathbb{D}_a(0)=\{\, z\in\C\,\colon\,
\vert z\vert <a\,\}$.

\begin{definition}
\label{def ucgf}
We call {\em limit cumulant generating function} of the process $\{S_n(\xi)\}_{n\geq 0}$ to any function
$c(\xi,\cdot):\mathbb{D}_a(0)\to\C$ such that 
there exist a constant $C>0$ and a numeric sequence $\{\delta_n\}_{n\geq 0}$ for which the following properties hold:
\begin{enumerate}
\item[(1)] $c_n(\xi,\cdot)$ is well defined and analytic on $\mathbb{D}_a(0)$,  for all $n\in\N$,
\item[(2)] $\abs{ n\,c(\xi,z)-c_n(\xi,x,z)} \leq C\,\abs{z}+\delta_n$,
for all $n\in\N$, $z\in\mathbb{D}_a(0)$ and $x\in\Sigma$,
\item[(3)] $\lim_{n\to +\infty} \delta_n=0$.
\end{enumerate}
\end{definition}

Before discussing why they exist, let us draw some conclusions from the existence of limit cumulant generating functions.

\begin{proposition}
Given an $\FF$-measurable observable $\xi:\Sigma\to\R$,
let  $c(\xi, z)$ be a limit cumulant generating function of the process $\{S_n(\xi)\}_{n\geq 0}$ on $\mathbb{D}_a(0)$. Then
\begin{enumerate}
\item[(1)] $z\mapsto c(\xi,z)$ is analytic on $\mathbb{D}_a(0)$,
\item[(2)]  $c(\xi, 0)= 0$,
\item[(3)]  $\frac{d c}{dt}(\xi,0)= \EE_\mu(\hat \xi)$,
\item[(4)]  $c(\xi,t)\geq t\,\EE_\mu(\hat\xi)$, for all $t\in \R$,
\item[(5)]  the function $c(\xi,\cdot):(-a,a) \to\R$,  $t\mapsto c(\xi,t)$, \, is convex.
\end{enumerate}
\end{proposition}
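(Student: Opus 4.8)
The plan is to deduce each property of the limit cumulant generating function $c(\xi,\cdot)$ from the corresponding property of the finite-$n$ cumulant generating functions $c_n(\xi,x,\cdot)$, using the approximation estimate in item~(2) of Definition~\ref{def ucgf} and then passing to the limit $n\to\infty$. The key observation is that each $c_n(\xi,x,\cdot)$ is, up to a harmless rescaling, itself the cumulant generating function of a random variable, namely $S_n(\xi)$ on the probability space $(X^+,\Fcal^+,\Pp_x^+)$, so the structural facts from Proposition~\ref{prop c(eta,x,t)} (well-definedness, vanishing at $0$, derivative at $0$, the tangent-line lower bound, and convexity) apply to $c_n(\xi,x,\cdot)$ for each fixed $n$ and $x$. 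One must first check the hypothesis of that proposition, i.e.\ that $S_n(\xi)$ has the required exponential moments uniformly in $x$; this is immediate when $\xi$ is bounded since then $|S_n(\xi)|\le n\,\norm{\xi}_\infty$, and in the generality needed here it is subsumed by hypothesis~(1) of Definition~\ref{def ucgf}, which already postulates that $c_n(\xi,x,\cdot)$ is analytic on $\mathbb{D}_a(0)$.

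For property~(1), analyticity of $c(\xi,\cdot)$ on $\mathbb{D}_a(0)$: from the estimate $\abs{n\,c(\xi,z)-c_n(\xi,x,z)}\le C\abs{z}+\delta_n$ we get $c(\xi,z)=\lim_{n\to\infty}\frac{1}{n}c_n(\xi,x,z)$ uniformly on $\mathbb{D}_a(0)$ (for any fixed $x$), and a locally uniform limit of analytic functions is analytic by Weierstrass's theorem. For property~(2), evaluating the estimate at $z=0$ gives $\abs{n\,c(\xi,0)-c_n(\xi,x,0)}\le\delta_n$, and since $c_n(\xi,x,0)=\log\EE_x[e^0]=\log 1=0$ we obtain $\abs{c(\xi,0)}\le\delta_n/n\to 0$, hence $c(\xi,0)=0$. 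For property~(3), differentiate the relation $c(\xi,z)=\lim_n \frac1n c_n(\xi,x,z)$ at $z=0$; since the convergence is uniform on a disk, the derivatives converge too, so $\frac{dc}{dt}(\xi,0)=\lim_n\frac1n\frac{dc_n}{dt}(\xi,x,0)$. Now $\frac1n\frac{dc_n}{dt}(\xi,x,0)=\frac1n\EE_x[S_n(\xi)]=\frac1n\sum_{j=0}^{n-1}\EE_x[\hat\xi\circ T^j]=\frac1n\sum_{j=0}^{n-1}\int_\Sigma\xi\,dK^j_x$, which converges to $\EE_\mu(\hat\xi)$ by the strong mixing property (or more elementarily by $K$-stationarity of $\mu$ and the convergence $K^j_x\to\mu$); this yields~(3).

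For properties~(4) and~(5) it is cleanest to pass the pointwise inequalities for $c_n$ to the limit. By Proposition~\ref{prop c(eta,x,t)}(5) applied to $S_n(\xi)$, each $t\mapsto c_n(\xi,x,t)$ is convex on $(-a,a)$; dividing by $n$ and letting $n\to\infty$ preserves convexity, so $c(\xi,\cdot)$ is convex on $(-a,a)$, giving~(5). For~(4), Proposition~\ref{prop c(eta,x,t)}(4) gives $c_n(\xi,x,t)\ge t\,\EE_x[S_n(\xi)]$ for $t\in(-a,a)$; dividing by $n$ and letting $n\to\infty$, the left side tends to $c(\xi,t)$ and the right side to $t\,\EE_\mu(\hat\xi)$ by the computation in~(3), so $c(\xi,t)\ge t\,\EE_\mu(\hat\xi)$ for $t\in(-a,a)$. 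Alternatively, $(4)$ follows formally from $(2)$, $(3)$ and $(5)$ exactly as in the proof of Proposition~\ref{prop c(eta,x,t)}, since a convex function lies above its tangent line at $0$. The only mild subtlety — and the step I would be most careful about — is ensuring that $\frac1n\EE_x[S_n(\xi)]\to\EE_\mu(\hat\xi)$ uniformly (or at least pointwise) in $x$; this is precisely where strong mixing of $(K,\mu)$ enters, via $\abs{\int\xi\,dK^j_x-\int\xi\,d\mu}\le C\rho^j\norm{\xi}_\infty$ and Cesàro summation, so the argument is routine but should be invoked explicitly rather than glossed over.
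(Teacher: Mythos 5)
Your treatment of (1), (2), (4), and (5) matches the paper's: uniform convergence of $\frac{1}{n}c_n(\xi,x,\cdot)$ plus Weierstrass for analyticity, evaluation at $z=0$ for (2), closedness of convexity for (5), and (4) from (2), (3), (5) as a tangent-line inequality.

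For (3), however, you invoke a hypothesis the proposition does not make. You justify $\frac{1}{n}\sum_{j=0}^{n-1}\int_\Sigma\xi\,dK^j_x\to\EE_\mu(\hat\xi)$ by ``the strong mixing property,'' but the proposition is stated for an arbitrary Markov system $(K,\mu)$, with no mixing assumed --- all that is given is $K$-stationarity of $\mu$ and the existence of a limit cumulant generating function. Your parenthetical alternative (``by $K$-stationarity and the convergence $K^j_x\to\mu$'') is not actually weaker: $K^j_x\to\mu$ for every $x$ is essentially strong mixing and does not follow from $K$-stationarity alone (think of a kernel with several ergodic components). The paper sidesteps this by integrating the finite-$n$ cumulants against $\mu$ first, setting $\hat c_n(\xi,z):=\int_\Sigma c_n(\xi,x,z)\,d\mu(x)$; then $K$-stationarity gives the \emph{exact} identity $\frac{1}{n}\frac{d\hat c_n}{dt}(\xi,0)=\frac{1}{n}\int_\Sigma\EE_x[S_n(\xi)]\,d\mu(x)=\EE_\mu(\hat\xi)$ for every $n$, and the $x$-uniform estimate in Definition~\ref{def ucgf} survives the $\mu$-integration, so passing to the limit of the derivatives gives (3) with no mixing. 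Your argument is easy to repair in the same spirit: you have already shown $\frac{1}{n}\EE_x[S_n(\xi)]\to c'(\xi,0)$ for every $x$; now integrate that convergence against $\mu$ (dominated convergence, with the bound from Definition~\ref{def ucgf} and a Cauchy estimate) and use the exact stationarity identity $\frac{1}{n}\int_\Sigma\EE_x[S_n(\xi)]\,d\mu(x)=\EE_\mu(\hat\xi)$, rather than appealing to $K^j_x\to\mu$.
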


\begin{proof} 
The function $c(\xi,z)$ is analytic on $\mathbb{D}_a(0)$ because it is the uniform limit of the sequence of analytic functions $\frac{1}{n}\,c_n(\xi,x,z)$. This proves (1).

Item (2) follows directly from proposition~\ref{prop c(eta,x,t)} (2).

Consider now the sequence of analytic functions
$$  \hat c_n(\xi,z):= \int_\Sigma c_n(\xi,x,z)\,d\mu(x)\;. $$
Then
$$  \frac{d \hat c_n}{dt}(\xi,0)
= \int_\Sigma \EE_x[ S_n(\xi)]\,d\mu(x) = \EE_\mu(\hat\xi)\;. $$
Taking the limit identity (3) holds.

Since convexity is a closed  property, (5) follows from proposition~\ref{prop c(eta,x,t)} (5).

Finally, (2),(3) and (5) imply (4).
\end{proof}

Next proposition relates the existence of a limit cumulant generating function for the process $\{S_n(\xi)\}_{n\geq 0}$ with LDT estimates of exponential type for  $\xi$.

\begin{proposition}
\label{prop lcgd => LDT estimates}
Let $\xi:\Sigma\to\R$ be $\FF$-measurable observable,
and $c(\xi,z)$ be a limit cumulant generating function of the process $\{S_n(\xi)\}_{n\geq 0}$ on $\mathbb{D}_a(0)$.

Given $h>\frac{d^2 c}{dt^2}(\xi,0)$, there exist  $C,\varepsilon_0>0$ such that for all $n\in\N$, $x\in\Sigma$ and $0<\varepsilon<\varepsilon_0$,

$$ \Pp_x^+\left[ \; \abs{ \frac{1}{n}\, S_n(\xi) - \EE_\mu(\hat \xi)} > \varepsilon  \, \right] \leq C\, e^{-n\, \frac{\varepsilon^2}{2 h}} \;. $$
In other words, $\xi$ satisfies   LDT estimates of exponential type.
\end{proposition}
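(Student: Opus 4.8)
The plan is to follow the classical Cramér--Chernoff exponential moment (Bernstein) argument, using the limit cumulant generating function as a uniform-in-$x$ replacement for the usual (single-measure) cumulant generating function. First I would fix $h>\frac{d^2c}{dt^2}(\xi,0)$ and use the analyticity of $c(\xi,\cdot)$ on $\mathbb{D}_a(0)$, together with $c(\xi,0)=0$ and $\frac{dc}{dt}(\xi,0)=\EE_\mu(\hat\xi)$, to Taylor-expand: there is $0<\delta\le a$ such that for $\abs{t}<\delta$,
$$ c(\xi,t)\le t\,\EE_\mu(\hat\xi)+\frac{h}{2}\,t^2\;. $$
Then, by property (2) in Definition~\ref{def ucgf}, $c_n(\xi,x,t)\le n\,c(\xi,t)+C\abs{t}+\delta_n$ for all $x\in\Sigma$ and $\abs{t}<\delta$. (This is where the uniformity in $x$ of the hypothesis is essential: the bound on $c_n(\xi,x,t)$ is the same for every starting state.)

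Next I would run the standard Markov/Chernoff bound on each tail. For the upper tail, for any $0<t<\delta$,
$$ \Pp_x^+\!\left[\tfrac1n S_n(\xi)-\EE_\mu(\hat\xi)>\varepsilon\right]
= \Pp_x^+\!\left[e^{t\,S_n(\xi)}>e^{t\,n(\varepsilon+\EE_\mu(\hat\xi))}\right]
\le e^{-t\,n(\varepsilon+\EE_\mu(\hat\xi))}\,\EE_x\!\left[e^{t\,S_n(\xi)}\right]
= e^{-t\,n(\varepsilon+\EE_\mu(\hat\xi))+c_n(\xi,x,t)}\;. $$
Inserting the bound on $c_n$ and then the quadratic bound on $c(\xi,t)$, the exponent is at most $-n t\varepsilon+\frac{h}{2}n t^2+C t+\delta_n$. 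Optimizing in $t$ (choosing $t=\varepsilon/h$, which lies in $(0,\delta)$ once $\varepsilon<\varepsilon_0:=h\delta$) gives exponent $\le -n\varepsilon^2/(2h)+C\varepsilon/h+\delta_n\le -n\varepsilon^2/(2h)+C\delta/h+\sup_n\delta_n$, so with $C':=e^{C\delta/h+\sup_n\delta_n}$ we get $\Pp_x^+[\cdots>\varepsilon]\le C' e^{-n\varepsilon^2/(2h)}$. The lower tail $\Pp_x^+[\frac1n S_n(\xi)-\EE_\mu(\hat\xi)<-\varepsilon]$ is handled identically, now using $t=-\varepsilon/h\in(-\delta,0)$ and the same quadratic estimate for $c(\xi,t)$ valid on all of $(-\delta,\delta)$; note that $\frac{dc}{dt}(\xi,0)=\EE_\mu(\hat\xi)$ is two-sided, so no separate argument for negativity is needed. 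Adding the two tail bounds gives the claim with $C=2C'$.

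The only genuinely delicate point is making sure the $x$-dependence is absorbed uniformly: the quantities $C$, $\delta_n$ and $a$ in Definition~\ref{def ucgf}, hence $h$, $\delta$, $\varepsilon_0$ and the final $C$, are all independent of $x\in\Sigma$, so the resulting bound is uniform in the starting state, exactly as the definition of ``LDT estimates of exponential type'' requires. A minor technical check, which I would dispatch in one line, is that $S_n(\xi)$ is bounded (since $\xi$ may be assumed bounded, or at least that $c_n(\xi,x,\cdot)$ is analytic near $0$ as guaranteed by property (1)), so all the exponential moments $\EE_x[e^{t S_n(\xi)}]$ appearing above are finite for $\abs{t}<\delta$ and the Chernoff manipulations are legitimate.
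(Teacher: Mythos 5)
Your proposal is correct and follows essentially the same Cram\'er--Chernoff argument as the paper: Taylor-bound the limit cumulant generating function $c(\xi,t)$ near $0$ using $h>c''(\xi,0)$, transfer this to $c_n(\xi,x,t)$ via Definition~\ref{def ucgf}(2), apply Markov's inequality, and optimize at $t=\varepsilon/h$. The only cosmetic differences are that the paper first normalizes to $\EE_\mu(\hat\xi)=0$ and treats the lower tail by applying the same bound to $-\xi$, whereas you carry the mean explicitly and take $t<0$ directly; also note the small slip $C\varepsilon/h\le C\delta$ (not $C\delta/h$, since $t=\varepsilon/h<\delta$), which does not affect the conclusion.
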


\begin{proof}
Let us abbreviate  $c(t)=c(\xi,t)$.
We can assume that  $c'(0)=\EE_\mu(\hat \xi)=0$. Otherwise we would work with
$\xi'=\xi-\EE_\mu(\xi)\,\one$, for which $\EE_{\mu}(\hat \xi')=0$. Notice that  the normalized process $\{S_n(\xi')\}_{n\geq 0}$ admits the
limit cumulant generating function
\,
$c(\xi', t)= c(t)-t\,\EE_\mu(\xi)=c(t)-t\,c'(0) $.

Since $h>c''(0)$, we can choose $0<t_0<a$ such that
for all $t\in (-t_0,t_0)$,
$$ 0\leq c(t)  < \frac{h\,t^2}{2}\;. $$

By definition~\ref{def ucgf}, for all $t\in (-t_0,t_0)$,
$$ \EE_x[ e^{t\,S_n(\xi)} ]=e^{c_n(\xi,x,t)} 
\leq e^{n\, c(t) + C\,\vert t\vert +\delta_n} \leq \frac{C_0}{2} \, e^{n\, \frac{h\,t^2}{2}} \;, $$
where 
$C_0:=2\,e^{C\, t_0 +\sup_{n\geq 0} \delta_n}$.
Thus, by Chebyshev's inequality we have forall  $\abs{t}<t_0$
$$
\Pp_x[\, S_n(\xi) >n\varepsilon\,] \leq e^{-t n \varepsilon} \EE_x[e^{t S_n(\xi)}] \leq \frac{C_0}{2}\, e^{- n\, \left( t  \varepsilon - \frac{h\,t^2}{2} \right) }\;. $$
Given $0<\varepsilon<\varepsilon_0:=h\,t_0$, pick  $t=\frac{\varepsilon}{\ctwo}\in ]0,t_0[$. 
This choice of $t$ minimizes the function $g(t)=e^{-\left( t  \varepsilon -\frac{\ctwo \,t^2}{2}\right)}$.
For this value of $t$ we obtain
$$ \Pp_x[\, S_n(\xi) >n\varepsilon\,]  \leq  \frac{C_0}{2}\, e^{- \frac{\varepsilon^2}{2\,\ctwo} \,n} \;.
$$
We can derive the same conclusion for  $-\xi$, 
because $c(\xi,-t)$ is a limit cumulant generating function of the process $\{S_n(-\xi)\}_{n\geq 0}$,
\begin{align*}
 \Pp_x[\, S_n(\xi) < - n\varepsilon\,]  &=  \Pp_x[\, S_n(-\xi) >n\varepsilon\,] \leq \frac{1}{2}\,C_0\,
 e^{- \frac{\varepsilon^2}{2\,\ctwo} \,n  }\;. 
\end{align*}
Thus, for all $x\in\Sigma$, $0<\varepsilon<\varepsilon_0$ and $n\in\N$,
\begin{align*}
\Pp_x[\; \vert S_n(\xi)\vert  >n\varepsilon\,] \leq   C_0\,e^{- \frac{\varepsilon^2}{2\,\ctwo}\,n } \;. 
\end{align*}
\end{proof}

\begin{remark}
To obtain a sharp upper bound on the rate function
for the large deviations of the process $S_n(\xi)$
we should have used the Legendre transform of the convex function
$c(t)-t\,c'(0)$. Here because we do not care about sharp estimates, but mainly to avoid dealing with the degenerate case where $c(t)$ is not strictly convex, we have replaced 
$c(t)-t\,c'(0)$ by its upper bound $\frac{h\,t^2}{2}$
on the small neighborhood $(-t_0,t_0)$, which is always strictly convex.
\end{remark}

Consider now a topological space $\Xfrk$ of observed Markov systems $(K,\mu,\xi)$, on a given measurable space $(\Sigma,\FF)$.

Denote by $\Hscr(\mathbb{D}_a(0))$ the Banach space of
analytic functions $f:\mathbb{D}_a(0)\to\C$ with a continuous extension up to the disk's closure. Endow this space with the usual max norm
$\norm{f}_\infty=\max_{\vert z\vert\leq a} \vert f(z) \vert$.

\begin{corollary}
\label{coro uniform LDT estimates}
Assume there is continuous map
$c:\Xfrk\to \Hscr(\mathbb{D}_a(0))$ such that
\begin{enumerate}
\item[(a)] for each $(K,\mu,\xi)\in\Xfrk$, the function
$c(\xi,z):=c(K,\mu,\xi)(z)$ is a limit cumulant generating function of the process $\{S_n(\xi)\}_{n\geq 0}$ on $\mathbb{D}_a(0)$,
\item[(b)] the parameters
$C$ and $\delta_n$ in definition~\ref{def ucgf} can be chosen uniformly in $\Xfrk$.
\end{enumerate}

Then 
\begin{enumerate}
\item[(1)] For each $(K,\mu,\xi)\in\Xfrk$ there exists  a neighborhood $\V$ in $\Xfrk$  such that $\V$ satisfies uniform LDT estimates of exponential type.

\item[(2)] If there exists $h>0$ such that
$\frac{d^2}{dt^2}c(\xi,0)<h$ for all $(K,\mu,\xi)\in\Xfrk$ then $\Xfrk$  satisfies uniform LDT estimates of exponential type.
\end{enumerate}
\end{corollary}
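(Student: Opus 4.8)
The plan is to deduce both statements from Proposition~\ref{prop lcgd => LDT estimates} by tracking how the constants produced there depend on the data, and then using the continuity of the map $c:\Xfrk\to\Hscr(\mathbb{D}_a(0))$ together with hypothesis (b) to make those constants locally (resp. globally) uniform. The key observation is that the proof of Proposition~\ref{prop lcgd => LDT estimates} produces the final constants $C_0$ and $\varepsilon_0$ out of exactly three inputs: the number $h$, chosen above $c''(0)=\frac{d^2}{dt^2}c(\xi,0)$; the radius $t_0\in(0,a)$ on which $0\le c(t)-t\,c'(0)<\frac{h t^2}{2}$ holds; and the parameters $C$ and $\sup_n\delta_n$ from Definition~\ref{def ucgf}. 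Under hypothesis (b) the last pair is already uniform over $\Xfrk$, so everything reduces to controlling $h$ and $t_0$.

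First I would prove (2), which is the cleaner case. Fix $h>0$ with $\frac{d^2}{dt^2}c(\xi,0)<h$ for all $(K,\mu,\xi)\in\Xfrk$. I need a single $t_0\in(0,a)$, independent of $(K,\mu,\xi)$, such that $0\le c(\xi,t)-t\,\frac{d c}{dt}(\xi,0)<\frac{h t^2}{2}$ for $|t|<t_0$. The lower bound is automatic by Proposition~\ref{prop lcgd => LDT estimates}'s preamble (convexity, item (4) of the preceding proposition). For the upper bound, write the Taylor expansion of $g(t):=c(\xi,t)-t\,\frac{dc}{dt}(\xi,0)$ at $0$ with integral remainder: $g(t)=\frac{t^2}{2}g''(0)+\text{(higher order)}$, and estimate the remainder using Cauchy estimates for the derivatives of $c(\xi,\cdot)$ on $\mathbb{D}_a(0)$ in terms of $\norm{c(\xi,\cdot)}_\infty$. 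Since $\Xfrk$ is mapped continuously into $\Hscr(\mathbb{D}_a(0))$, the quantities $\norm{c(\xi,\cdot)}_\infty$ need not be bounded over all of $\Xfrk$ — and here is where I have to be slightly careful: statement (2) as phrased asks for a global conclusion. I would handle this by noting that the Cauchy estimate on a slightly smaller disk $\mathbb{D}_{a'}(0)$ controls $g''(0)$ and all higher derivatives; combined with the uniform gap $h-\frac{d^2}{dt^2}c(\xi,0)$... but this gap is not assumed uniform, only the one-sided bound $h>\frac{d^2}{dt^2}c(\xi,0)$. The honest route for (2) is therefore to cover $\Xfrk$ by the neighborhoods produced in (1) plus a compactness-type argument, or — more likely the intended reading — to apply the uniform bound directly: if additionally $\norm{c(\xi,\cdot)}_\infty$ is uniformly bounded (which follows when, as in the applications, $\Xfrk$ is itself compact or the cocycle norms are uniformly bounded), then a single $t_0$ works and Proposition~\ref{prop lcgd => LDT estimates} gives the same $C,\varepsilon_0$ throughout.

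For (1), fix $(K,\mu,\xi_0)\in\Xfrk$ and set $h:=\frac{d^2}{dt^2}c(\xi_0,0)+1$. By continuity of $c$ and the Cauchy estimate $\big|\frac{d^2}{dt^2}c(\xi,0)-\frac{d^2}{dt^2}c(\xi_0,0)\big|\le \frac{2}{(a')^2}\,\norm{c(\xi,\cdot)-c(\xi_0,\cdot)}_{\infty}$ on a disk $\mathbb{D}_{a'}(0)\subset\mathbb{D}_a(0)$, there is a neighborhood $\V_1$ of $(K,\mu,\xi_0)$ on which $\frac{d^2}{dt^2}c(\xi,0)<h$. Similarly, shrinking to $\V\subset\V_1$ I can make $\norm{c(\xi,\cdot)}_\infty\le \norm{c(\xi_0,\cdot)}_\infty+1$, hence bound all derivatives of $g(t)=c(\xi,t)-t\,\frac{dc}{dt}(\xi,0)$ uniformly on $\V$, which yields a single $t_0\in(0,a)$ with $0\le g(t)<\frac{h t^2}{2}$ for $|t|<t_0$ and all $(K,\mu,\xi)\in\V$. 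Together with hypothesis (b) this makes $C_0=2e^{C t_0+\sup_n\delta_n}$ and $\varepsilon_0=h t_0$ independent of $(K,\mu,\xi)\in\V$, so the conclusion of Proposition~\ref{prop lcgd => LDT estimates} holds with these common constants: $\V$ satisfies uniform LDT estimates of exponential type. The main obstacle is the bookkeeping in the previous paragraph — ensuring the passage from a pointwise gap $h>c''(0)$ to a neighborhood on which a \emph{single} radius $t_0$ works — which is exactly the place where the continuity hypothesis on $c$ and the Cauchy estimates on $\Hscr(\mathbb{D}_a(0))$ do the real work; everything downstream is a verbatim rerun of the estimates in the proof of Proposition~\ref{prop lcgd => LDT estimates}.
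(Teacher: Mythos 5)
Your proof of item (1) is correct and follows essentially the same route as the paper's own proof, which is devoted entirely to (1): exploit continuity of $c:\Xfrk\to\Hscr(\mathbb{D}_a(0))$ to find a neighborhood $\V$ and a \emph{single} radius $t_0\in(0,a)$ on which $0\le c(\xi,t)-t\,\frac{dc}{dt}(\xi,0)<\frac{h\,t^2}{2}$ holds for every $(K,\mu,\xi)\in\V$, then rerun the argument of Proposition~\ref{prop lcgd => LDT estimates} using the uniform $C$ and $\delta_n$ guaranteed by hypothesis (b). The paper asserts the existence of such $\V$ and $t_0$ ``by continuity''; your Cauchy-estimate bookkeeping (controlling $\frac{d^2}{dt^2}c(\xi,0)$ and the higher Taylor coefficients via $\norm{c(\xi,\cdot)-c(\xi_0,\cdot)}_\infty$ on a slightly smaller disk) is the natural way to make that step explicit.

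Your reservation about item (2) is also well-founded: the paper's proof of the corollary addresses only (1) and supplies no separate argument for (2). As you observe, a one-sided pointwise bound $\frac{d^2}{dt^2}c(\xi,0)<h$ over all of $\Xfrk$ does not by itself produce a common $t_0$, since the size of the interval on which $c(\xi,t)-t\,\frac{dc}{dt}(\xi,0)<\frac{h\,t^2}{2}$ depends on the higher-order Taylor coefficients of $c(\xi,\cdot)$, i.e., on $\norm{c(\xi,\cdot)}_\infty$, and continuity of $c$ gives only local, not uniform, control of this quantity. Some additional hypothesis — a uniform bound $\sup_{(K,\mu,\xi)\in\Xfrk}\norm{c(\xi,\cdot)}_\infty<\infty$, or equivalently (via the relation $|n\,c(\xi,z)-c_n(\xi,x,z)|\le C|z|+\delta_n$ with $n=1$) a uniform bound on $\norm{\xi}_\infty$ — is needed for the global conclusion, and this is precisely the kind of bound that is in force in the applications (Theorem~\ref{Base:LDT} and Theorem~\ref{Fiber:LDT}). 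Your instinct to flag this as a gap rather than invent an argument is the right call; as it stands, the statement of (2) should be read with that extra uniformity implicitly in force.
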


\begin{proof}
Given $(K_0,\mu_0,\xi_0)\in\Xfrk$,
let $c_0(t):=c(K_0,\mu_0,\xi_0)(t)$, and take
$h>c_0''(0)$. By continuity of $c:\Xfrk\to \Hscr(\mathbb{D}_a(0))$ there exist a neighborhood $\V$ of $(K_0,\mu_0,\xi_0)$ in $\Xfrk$ and $t_0>0$ such that for any $(K,\mu,\xi)\in \V$,
the function $c(\xi,z):=c(K,\mu,\xi)(z)$ satisfies for all $t\in (-t_0,t_0)$,
$$  c(\xi,t)-t\, \frac{d c}{dt} (\xi,0) < \frac{h\,t^2}{2}\;. $$
The argument used to prove
proposition~\ref{prop lcgd => LDT estimates}
shows that $\V$ satisfies uniform LDT estimates of exponential type.
\end{proof}

The strategy to meet the assumptions of corollary~\ref{coro uniform LDT estimates},
i.e., to prove the existence of a limit cumulant generating function for the process $\{S_n(\xi)\}_{n\geq 0}$,  is a spectral method that we describe now.

 Define a family of Laplace-Markov  operators
$$ (Q_t f)(x)=(Q_{K,\xi,t} f)(x):= \int_\Sigma f(y)\,e^{t\, \xi(y)}\,K(x,dy) \;, $$
on some appropriate Banach space $\B$, embedded in $L^\infty(\Sigma,\FF)$, and containing the constant functions.
Notice that by definition $(Q_t \one)(x)=\EE_x[e^{t\,\hat \xi}]$. Hence, iterating this relation
we obtain the following formula for the moment generating function of $S_n(\xi)$: for all $x\in\Sigma$ and $n\in\N$,
$$  \EE_x[ e^{t\,S_n(\xi)} ] = (Q_t^n \one)(x) \;. $$
For $t=0$, the operator $Q_0: \B\to \B$,
is a Markov operator. In particular it is a positive operator which fixes the constant functions, e.g., $Q_0\one=\one$, and  whose spectrum is contained in the closed unit disk.
The key ingredient to estimate the moment generating function $\EE_x[e^{t\,S_n(\xi)}]$ via this spectral approach is the assumption that
the operator $Q_0: \B\to \B$ is {\em quasi-compact} and {\em simple}. This means that the eigenvalue $1$ of $Q_0$ is simple and there exists a spectral gap separating this eigenvalue from the rest of spectrum inside the open unit disk.
Under this hypothesis, $Q_t$ is a positive operator,  whenever defined, and there exists a unique eigenfunction $v(t)\in\B$ such that $Q_t v(t)=\lambda(t)\, v(t)$, normalized by $\EE_\mu[v(t)]=1$, and corresponding to a positive eigenvalue $\lambda(t)$ of $Q_t$.
Hence, because the functions $t\mapsto \lambda(t)$
and $t\mapsto v(t)$ are continuous in $t$ (in fact analytic), we have
$$\EE_\mu[ e^{t\,S_n(\xi)} ] =
\int (Q_t^n \one)\,d\mu \approx \int  Q_t^n v(t) \,d\mu  =\int \lambda(t)^n v(t)\,d\mu =   \lambda(t)^n\;. $$
From this relation we infer that
$c(t)=\log \lambda(t)$ is a limit cumulant generating function for the process $S_n(\xi)$.
Therefore, by proposition~\ref{prop lcgd => LDT estimates}, $\xi$ satisfies LDT estimates of exponential type.

To obtain uniform LDT estimates, through corollary~\ref{coro uniform LDT estimates}, we assume some weak  continuous dependence of the family of operators $t\mapsto Q_{K,\xi,t}$ on the observed Markov system  $(K,\mu,\xi)$, which implies that the eigenvalue function $\lambda(t)\in \Hscr(\mathbb{D}_a(0))$ also depends continuously on    $(K,\mu,\xi)$.

\subsection{Literature review}
\label{random_lit_review}
We mention   briefly some of the origins of this subject.

One is the aforementioned Furstenberg's work,
started with the proof  by H. Furstenberg and H. Kesten 
 of a  law of large numbers for random i.i.d. products of matrices ~\cite{Furstenberg-Kesten}, and later abstracted by Furstenberg to a seminal theory on  random products in semisimple Lie groups ~\cite{Furstenberg-63}. In this context, a first central limit theorem  was proved by V. N. Tutubalin in ~\cite{Tutubalin}.
Since its origin, the scope of Furstenberg's theory has been greatly  extended
by many contributions. See for instance the
book of  A. Raugi~\cite{Raugi-book} and  
Y. Guivarc'h and A. Raugi's paper  ~\cite{GR}.

Another source is a central limit theorem of  S.V.Nagaev for
stationary Markov chains (see~\cite{Nagaev}). In his approach Nagaev uses the spectral properties of a  quasi-compact Markov operator acting on some space of bounded measurable functions.
This method was used by E. Le Page to obtain  more general central limit theorems, as well as a large deviation principle, for random i.i.d. products of matrices ~\cite{LP}.
Later P. Bougerol extended Le Page's approach, proving similar
results for Markov type random  
products of matrices (see ~\cite{Bou}).

The book of P. Bougerol and J. Lacroix ~\cite{BL},
on  random i.i.d. products of matrices, is an excellent introduction on the subject  in  ~\cite{LP,Bou}.
More recentely, the book of  H. Hennion and L. Herv\'e  ~\cite{HH} describes a powerful abstract setting where the
method of Nagaev can be applied to derive limit theorems.
It contains several applications, including to dynamical systems and linear cocycles, that illustrate the method.

\bigskip

\section{An abstract setting}
\label{random_as}

In this section we specialize an abstract setting in~\cite{HH}, from which we derive an abstract theorem on the existence of uniform LDT estimates for Markov processes.

\subsection{The assumptions}
\label{random_setting}
\newcommand{\Bscr}{\mathscr{B}}

Let $\B$ be a  Banach space, and $\mathcal{L}(\B)$ denote 
 the Banach algebra of bounded 
linear operators $T:\B\to \B$.
Given $T\in \mathcal{L}(\B)$,
we denote its spectrum by $\sigma(T)$,
and its spectral radius by 
$$\rho(T)=\lim_{n\to+\infty} \norm{T^ n}^{1/n}
= \inf_{n\geq 0} \norm{T^ n}^{1/n}\;.$$

\begin{definition}
The operator $T$ is called  quasi-compact  if there is a $T$-invariant decomposition
$\B=F\oplus\Hscr$ such that $\dim F<+\infty$ and  
the spectral radius of $T\vert_\Hscr$ is (strictly) less than the absolute
value $\vert \lambda\vert $ of any eigenvalue $\lambda$ of $T\vert_F$.
$T$ is called  quasi-compact and simple   when  furthermore  $\dim F=1$.
In this case $\sigma(T\vert_F)$ consists of a single simple eigenvalue referred to as
the  maximal eigenvalue of $T$.
\end{definition}

Consider a  Markov system $(K,\mu)$ on a compact metric space $\Sigma$.

\begin{definition}
The following linear operator  is called a  {\em Markov operator}
$$ (Q f)(x)= (Q_K f)(x):= \int_X f(y)\, K(x,dy)\;.$$
\end{definition}
It operates on $\FF$-measurable functions on $\Sigma$,
 mapping $L^p$ functions to $L^p$ functions, for any $1\leq p\leq \infty$.
We shall write $Q$ instead of $Q_K$ when  the kernel $K$ is fixed.

\begin{definition}
The following  linear operator is called a  {\em Laplace-Markov operator}
$$ (Q_{\xi} f)(x) = (Q_{K,\xi} f)(x):= \int_X f(y)\, e^{\xi(y)}\, K(x,dy)\;.$$
\end{definition}
It also operates on $\FF$-measurable functions on $\Sigma$,
 but the domain of $Q_\xi$ depends also on the observable $\xi$.

\begin{proposition} Given a  Markov system $(K,\mu)$
the following are equivalent:
\begin{enumerate}
\item[(a)] $(K,\mu)$ is strongly mixing,
\item[(b)] $Q_K:L^\infty(\Sigma,\FF)\to L^\infty(\Sigma,\FF)$ is quasi-compact and simple.
\end{enumerate}
\end{proposition}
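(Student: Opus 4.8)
The plan is to translate both conditions into a single operator-norm estimate for $Q:=Q_K$ on the Banach space $\B:=L^\infty(\Sigma,\FF)$, by comparing $Q^n$ with the rank-one operator $P$ given by $Pf:=\bigl(\int_\Sigma f\,d\mu\bigr)\one$. First I would record the elementary facts that follow from each $K_x$ being a probability measure and $\mu$ being $K$-stationary: $P^2=P$, $\norm{Q}_{\mathcal{L}(\B)}\le1$, $Q\one=\one$, $QP=P$, $PQ=P$, and $\int_\Sigma(Qf)\,d\mu=\int_\Sigma f\,d\mu$. In particular $P$ and $Q$ commute and $\mathrm{Ran}\,P=\C\,\one$ is the line of constant functions. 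The key (essentially notational) point is that the estimate in Definition~\ref{Strong:mixing} is precisely the bound $\norm{Q^n-P}_{\mathcal{L}(\B)}\le C\rho^n$.

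For (a)$\Rightarrow$(b) I would use the $Q$-invariant splitting $\B=\mathrm{Ran}\,P\oplus\ker P$. The summand $\mathrm{Ran}\,P$ is one-dimensional and $Q$ acts on it as the identity; on $\Hscr:=\ker P$ one has $Q^n=(Q^n-P)|_\Hscr$, hence $\norm{Q^n|_\Hscr}\le C\rho^n$ and the spectral radius of $Q|_\Hscr$ is at most $\rho<1$. This is exactly quasi-compactness and simplicity, with $\dim F=1$ and maximal eigenvalue $1$.

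For (b)$\Rightarrow$(a), let $\B=F\oplus\Hscr$ be the quasi-compact, simple decomposition, $F=\C\,u$ with $Qu=\lambda u$ and $\rho(Q|_\Hscr)<\abs{\lambda}$. From $\norm{Q}\le1$ we get $\abs{\lambda}\le1$, so $\rho(Q|_\Hscr)<1$ and $\norm{Q^n w}\to0$ for all $w\in\Hscr$. Writing $\one=u_0+w_0$ with $u_0\in F$, $w_0\in\Hscr$, the identity $\one=Q^n\one=\lambda^n u_0+Q^n w_0$ gives $\lambda^n u_0\to\one$; since $u_0\neq0$ (otherwise $\one=\lim Q^n w_0=0$), comparing this for consecutive $n$ shows $\lambda^n(\lambda-1)u_0\to0$, forcing $\lambda=1$ and then $u_0=\one$. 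Thus $F=\C\,\one$, $Q|_F=I$, and so $Q^n f\to P_0f$ where $P_0$ is the spectral projection onto $F$; writing $P_0f=\psi(f)\one$ and integrating the limit against $\mu$, while using $\int(Q^n f)\,d\mu=\int f\,d\mu$, identifies $\psi(f)=\int_\Sigma f\,d\mu$, i.e. $P_0=P$. Finally, picking $\rho'\in(\rho(Q|_\Hscr),1)$ gives $\norm{Q^n|_\Hscr}\le C'(\rho')^n$ for a suitable $C'$, and for general $f$ one has $(Q^n-P)f=Q^n\bigl((I-P)f\bigr)$ with $(I-P)f\in\Hscr$ and $\norm{(I-P)f}_\infty\le2\norm{f}_\infty$, which yields the strong mixing estimate with constants $2C'$ and $\rho'$.

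The direction (a)$\Rightarrow$(b) is a near-restatement once the operator reformulation is in place, so I expect the only real work to be in (b)$\Rightarrow$(a): the abstract quasi-compactness hypothesis supplies merely \emph{some} eigenline and \emph{some} spectral projection, and the substance of the argument is to use the Markov structure ($Q\one=\one$, $\norm{Q}\le1$) together with the $K$-stationarity of $\mu$ to recognize these as the constants and as integration against $\mu$.
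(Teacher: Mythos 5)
Your (a)$\Rightarrow$(b) argument coincides with the paper's: define $H_0=\{f:\int f\,d\mu=0\}$, which is $Q_K$-invariant by stationarity, note the strong-mixing estimate is exactly $\norm{Q^n-P}_{\mathcal{L}(L^\infty)}\le C\rho^n$, and read off the spectral radius bound on $H_0$. No divergence there.

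For (b)$\Rightarrow$(a) the two proofs genuinely part ways. The paper simply \emph{asserts} that the quasi-compact decomposition takes the form $L^\infty=\langle\one\rangle\oplus H_0$, whereas you actually derive that $F=\C\one$ and $\lambda=1$ from $Q\one=\one$, $\norm{Q}\le1$ and the convergence $\lambda^n u_0\to\one$; that step is not automatic and your filling it in is a real improvement. More substantially, once the decomposition is in hand the paper goes through Hahn--Banach to produce a functional $\Lambda$ annihilating $H_0$, proves $\Lambda$ is positive, and invokes Riesz--Markov--Kakutani to realize it as a measure, then verifies stationarity — effectively reconstructing a stationary measure from the operator alone and only implicitly matching it with the given $\mu$. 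You instead use the hypothesis that $\mu$ is $K$-stationary directly: from $\int Q^n f\,d\mu=\int f\,d\mu$ and $Q^nf\to P_0f=\psi(f)\one$ you read off $\psi(f)=\int f\,d\mu$, i.e.\ the spectral projection is exactly $P$. This is shorter, uses only the data at hand, and sidesteps the delicate point that positive functionals on the algebra of bounded Borel functions need not a priori be represented by countably additive measures (the paper's Riesz step is, at best, stated loosely). What the paper's longer route buys is the additional conclusion that the stationary measure is uniquely determined by the operator; your route assumes $\mu$ given and simply identifies it. Both are correct, but your version is the cleaner proof of the stated equivalence.
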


\begin{proof}
If $(K,\mu)$ is strongly mixing, by definition~\ref{Strong:mixing} 
there exist constants $C>0$ and $0<\rho<1$ such that
for all $f\in L^\infty(\Sigma)$,
$$ \norm{ (Q_K)^n f -\langle f,\mu\rangle\, \one }_\infty \leq C\,\rho^n \norm{f}_\infty\;. $$
Defining
$$ H_0=\{\, f\in L^\infty(\Sigma)\,:\,
\langle f,\mu\rangle=0\; \} \;,$$
since $(Q_K)^\ast\mu=\mu$, this subspace is $Q_K$-invariant.
Thus, we have a 
$Q_K$-invariant decomposition 
$ L^\infty(\Sigma)=\langle \one\rangle \oplus H_0$ 
such that  $ \norm{ (Q_K)^n\vert_{H_0} }\leq C\,\rho^n$.
This implies that $r( Q_K \vert_{H_0} )\leq \rho <1$.

Conversely, if $Q_K:L^\infty(\Sigma)\to L^\infty(\Sigma)$ is quasi-compact and simple, there exists a $Q_K$-invariant decomposition
$ L^\infty(\Sigma)=\langle \one\rangle \oplus H_0$ such that $r( Q_K \vert_{H_0} )  <1$.
By the Hahn-Banach Theorem there is a bounded linear functional $\Lambda:L^\infty(\Sigma)\to\R$ such that
 $\Lambda(\one)=1$, and $\Lambda(f)=0$ for all $f\in H_0$.
We claim that $\Lambda$ is positive functional, i.e., $\Lambda(f)\geq 0$ whenever $f\geq 0$.
Take any function $f\in L^\infty(\Sigma)$ such that $f\geq 0$, and write $f=c\,\one + h$
with $h\in H_0$. Since $Q_K$ is a positive operator we have
$$ c\,\one =\lim_{n\to+\infty} ( c\,\one + (Q_K)^n h ) =  \lim_{n\to+\infty}  (Q_K)^n f \geq 0 \;,$$
which implies that $c=\Lambda(f)\geq 0$. 
Hence  $\Lambda$ is positive.
By the Riez-Markov-Kakutani Theorem there is a probability measure $\mu$ on $\Sigma$ such that
$\Lambda(f)=\int_\Sigma f\,d\mu$, for all $f\in L^\infty(\Sigma)$. 

Let us prove that  $\mu$ is $K$-stationary.
Given $f\in L^\infty(\Sigma)$,  write $f=c\,\one + h$, with $h\in H_0$.
Hence $Q_K f = c\,\one + Q_K h$ with $Q_K h\in H_0$.
This proves that $\mu$ is stationary,
$$\int_\Sigma (Q_K f)\,d\mu = \Lambda(Q_K f) = c = \Lambda(f)=\int_\Sigma f\,d\mu\;. $$

Now, because $H_0$ is the kernel of $\Lambda :L^\infty(\Sigma)\to\R$, 
we get that for all $f\in L^\infty(\Sigma)$, 
$f\in H_0$\, $\Leftrightarrow$\, $\langle f,\mu\rangle=0$.
Thus
$f- \langle f,\mu\rangle\,\one\in H_0$,  
and taking $r( Q_K \vert_{H_0} )< \rho <1$, there is a constant  $C>0$ such that
\begin{align*}
\norm{ (Q_K)^n f -\langle f,\mu\rangle\, \one }_\infty &=
\norm{ (Q_K)^n [ f -\langle f,\mu\rangle\, \one] }_\infty \\
&\leq C\,\rho^n \,\norm{f -\langle f,\mu\rangle\, \one}_\infty\\
&\leq 2\,C \,\rho^n \,\norm{f}_\infty\;.
\end{align*}
This proves that $(K,\mu)$ is strongly mixing.
\end{proof}

\medskip

We discuss now a setting, consisting of 
the assumptions (B1)-(B7) and (A1)-(A4) below,
where an abstract LDT theorem is proved, and from which 
theorems~\ref{Base:LDT} and ~\ref{Fiber:LDT} will be
deduced.
The context here specializes a  more general setting in~\cite{HH}.

Let $(\Xfrk,\dist)$ be  a metric space of observed Markov systems $(K,\mu,\xi)$ over the compact metric space $(\Sigma,d)$. 
Besides $\Xfrk$, this setting consists of a scale of complex Banach algebras $(\B_\alpha,\norm{\cdot}_\alpha)$ indexed in $\alpha\in [0,1]$,
 where each  $\B_\alpha$ is a space of bounded Borel measurable functions on $\Sigma$. We assume  that
there exist seminorms $v_\alpha:\B_\alpha\to [0,+\infty[$ such that for all  
 $0\leq \alpha  \leq 1$,
\begin{enumerate}
\item[(B1)] $ \norm{f}_\alpha = v_\alpha(f) + \norm{f}_\infty$, for all $f\in\B_\alpha$,
\item[(B2)] $\B_{0} = L^\infty(\Sigma)$, and $\norm{\cdot}_0$ is equivalent to $\norm{\cdot}_\infty$,
\item[(B3)] $\B_{\alpha}$ is a lattice, i.e., if $f\in\B_{\alpha}$ then $\overline{f}, \abs{f}\in\B_{\alpha}$,
\item[(B4)] $\B_{\alpha}$ is a  Banach algebra with unity $\one\in\B_{\alpha}$ and $v_\alpha(\one)=0$.
\end{enumerate}
Assume also that this family  is a {\em scale of normed spaces} in the sense that
 for all $0\leq \alpha_0<\alpha_1<\alpha_2 \leq 1$ (see~\cite{KP})
\begin{enumerate}
\item[(B5)] $\displaystyle \B_{\alpha_2}\subset \B_{\alpha_1}\subset \B_{\alpha_0}$,
\item[(B6)] $\displaystyle v_{\alpha_0}(f)\leq v_{\alpha_1}(f)\leq v_{\alpha_2}(f)$, for all $f\in \B_{\alpha_2}$,
\item[(B7)] $\displaystyle v_{\alpha_1}(f)\leq 
v_{\alpha_0}(f)^{\frac{\alpha_2-\alpha_1}{\alpha_2-\alpha_0}}
v_{\alpha_2}(f)^{\frac{\alpha_1-\alpha_0}{\alpha_2-\alpha_0}}$, for all $f\in \B_{\alpha_2}$.
\end{enumerate}

An example of a scale of Banach algebras satisfying (B1)-(B7)  are the spaces 
of $\alpha$-H\"older continuous functions on $(\Sigma,d)$. The norms on these spaces are defined as follows:  for all $\alpha\in ]0,1]$ 
and $f\in L^ \infty(\Sigma)$, let
$$ \norm{f}_\alpha := v_\alpha(f) + \norm{f}_\infty, \; \text{ with }\;
v_\alpha(f):= \sup_{\substack{x,y\in\Sigma\\x\neq y}} \frac{\abs{f(x)-f(y)}}{d(x,y)^ \alpha}\;.$$

\begin{proposition}\label{Holder:B1:B7}
If   $(\Sigma,d)$ has diameter $\leq 1$ then the family of spaces  
$$\Hscr_\alpha(\Sigma):=\{\, f\in L^ \infty(\Sigma)\,:\,
v_\alpha(f)<+\infty\,\},\;  \alpha\in [0,1] $$
satisfies (B1)-(B7). 
\end{proposition}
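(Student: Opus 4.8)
The plan is to verify the seven conditions (B1)--(B7) one at a time; most are immediate from the definition of the norms, and the diameter hypothesis is needed only for the scale conditions (B5)--(B7).

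First, (B1) is literally the definition of $\norm{\cdot}_\alpha$. For (B2), since $d(x,y)^{0}=1$ whenever $x\neq y$, one has $v_0(f)=\sup_{x\neq y}\abs{f(x)-f(y)}\leq 2\norm{f}_\infty$; hence $\Hscr_0(\Sigma)=L^\infty(\Sigma)$ and $\norm{f}_\infty\leq\norm{f}_0\leq 3\norm{f}_\infty$, so $\norm{\cdot}_0$ and $\norm{\cdot}_\infty$ are equivalent. Property (B3) follows from $\abs{\overline{f(x)}-\overline{f(y)}}=\abs{f(x)-f(y)}$ and $\bigl|\,\abs{f(x)}-\abs{f(y)}\,\bigr|\leq\abs{f(x)-f(y)}$, which give $v_\alpha(\overline f)=v_\alpha(f)$ and $v_\alpha(\abs f)\leq v_\alpha(f)$, so $\overline f,\abs f\in\Hscr_\alpha(\Sigma)$.

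Next I would establish (B4). The constant $\one$ has $v_\alpha(\one)=0$, so $\one\in\Hscr_\alpha(\Sigma)$ and $\norm{\one}_\alpha=1$. For the algebra property, exactly as in the proof of Proposition~\ref{algebra:Ha}, I would write $f(x)g(x)-f(y)g(y)=f(x)\bigl(g(x)-g(y)\bigr)+\bigl(f(x)-f(y)\bigr)g(y)$, divide by $d(x,y)^\alpha$ and take the supremum over $x\neq y$ to obtain $v_\alpha(fg)\leq\norm{f}_\infty v_\alpha(g)+\norm{g}_\infty v_\alpha(f)$, whence $\norm{fg}_\alpha\leq\norm{f}_\alpha\norm{g}_\alpha$. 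Completeness is the only point needing a genuine (if routine) argument: a $\norm{\cdot}_\alpha$-Cauchy sequence $(f_n)$ is $\norm{\cdot}_\infty$-Cauchy, hence converges uniformly to some $f\in L^\infty(\Sigma)$; fixing $n$ and letting $m\to\infty$ in the inequality $\abs{(f_n-f_m)(x)-(f_n-f_m)(y)}\leq\varepsilon\,d(x,y)^\alpha$ (valid for $n,m$ large, since $v_\alpha(f_n-f_m)\to 0$) shows $v_\alpha(f_n-f)\leq\varepsilon$ for $n$ large, so $f\in\Hscr_\alpha(\Sigma)$ and $f_n\to f$ in $\norm{\cdot}_\alpha$.

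Finally, for (B5)--(B7) the bound $\mathrm{diam}(\Sigma)\leq 1$ enters. Since $t:=d(x,y)\in(0,1]$, the map $\alpha\mapsto t^{-\alpha}$ is nondecreasing, so $v_{\alpha_0}(f)\leq v_{\alpha_1}(f)\leq v_{\alpha_2}(f)$ for $\alpha_0<\alpha_1<\alpha_2$, which is (B6); (B5) then follows, because $v_{\alpha_2}(f)<\infty$ forces $v_{\alpha_1}(f),v_{\alpha_0}(f)<\infty$. For the interpolation inequality (B7), set $\theta=\frac{\alpha_2-\alpha_1}{\alpha_2-\alpha_0}\in[0,1]$, so that $\alpha_1=\theta\alpha_0+(1-\theta)\alpha_2$ and hence $t^{\alpha_1}=(t^{\alpha_0})^{\theta}(t^{\alpha_2})^{1-\theta}$ for every $t>0$; splitting the difference quotient accordingly gives
\[
\frac{\abs{f(x)-f(y)}}{d(x,y)^{\alpha_1}}=\left(\frac{\abs{f(x)-f(y)}}{d(x,y)^{\alpha_0}}\right)^{\theta}\left(\frac{\abs{f(x)-f(y)}}{d(x,y)^{\alpha_2}}\right)^{1-\theta}\leq v_{\alpha_0}(f)^{\theta}\,v_{\alpha_2}(f)^{1-\theta},
\]
and taking the supremum over $x\neq y$ yields (B7), with $1-\theta=\frac{\alpha_1-\alpha_0}{\alpha_2-\alpha_0}$ as in the statement. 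No step here is a serious obstacle; the completeness claim in (B4) is the only one that is not essentially a one-line verification.
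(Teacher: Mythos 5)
Your proposal is correct and follows essentially the same route as the paper's proof: (B1) by definition, (B2) via the bound $v_0(f)\le 2\norm{f}_\infty$, (B3) trivially, (B4) via the Leibniz-type inequality for $v_\alpha$, (B5)--(B6) from monotonicity of $t\mapsto t^{-\alpha}$ on $(0,1]$, and (B7) from the interpolation/log-convexity identity $t^{\alpha_1}=(t^{\alpha_0})^{\theta}(t^{\alpha_2})^{1-\theta}$. You merely spell out a few points the paper leaves implicit (the explicit completeness argument in (B4), the equivalence constants in (B2)), which does no harm.
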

\begin{proof}
(B1) holds by definition of the H\"older norm $\norm{\cdot}_\alpha$.
For (B2) notice that $v_0(f)$ measures the oscillation of $f$, and hence 
$v_0(f)\leq 2\,\norm{f}_\infty$.
Property (B3) is obvious.
Assumption (B4) follows from the following inequality
$$v_\alpha(f\,g)\leq \norm{f}_\infty\,v_\alpha(g) + \norm{g}_\infty\,v_\alpha(f)\;,$$
that holds for all $f,g\in L^\infty(\Sigma)$.
The monotonicity properties (B5) and (B6) are straightforward to check.
Finally, assumption (B7) follows from the convexity of the function  $\alpha\mapsto \log v_\alpha(f)$. Given $\alpha_1,\alpha_2, s\in [0,1]$,
\begin{align*}
\log v_{s\,\alpha_1+(1-s)\,\alpha_2}(f) &= \log \sup_{x\neq y}
\frac{\vert f(x)-f(y)\vert ^{s+(1-s)}}{d(x,y)^{s\,\alpha_1+(1-s)\,\alpha_2}} \\
&\leq  \log \left( \sup_{x\neq y}
\frac{\vert f(x)-f(y)\vert }{d(x,y)^{\alpha_1}}\right)^s\,
\left( \sup_{x\neq y}
\frac{\vert f(x)-f(y)\vert }{d(x,y)^{\alpha_2}}\right)^{1-s}\\
&= s\,\log v_{\alpha_1}(f) + (1- s)\,\log v_{\alpha_2}(f)\;.
\end{align*}
\end{proof}

\medskip

We make now a second set of assumptions that rule the action of the Markov operators, associated to observable Markov systems $(K,\mu,\xi)\in \Xfrk$, on the Banach algebras $\B_\alpha$.

Assume there exists an interval 
$[\alpha_1,\alpha_0]\subset (0,1]$ 
with $\alpha_1<\frac{\alpha_0}{2}$ such that for all $\alpha\in [\alpha_1,\alpha_0]$ the following properties hold:

\begin{enumerate}
\item[(A1)] $(K,\mu,-\xi)  
\in \Xfrk$ whenever
$(K,\mu,\xi)\in \Xfrk$.

\item[(A2)]  The Markov operators $Q_K:\B_\alpha\to\B_\alpha$ are uniformly quasi-compact and simple. More precisely, there exist constants $C>0$ and
$0<\sigma<1$ such that for all $(K,\mu,\xi)\in\Xfrk$ and $f\in \B_\alpha$,
$$ \norm{Q_{K}^n f -\langle f,\mu \rangle \one }_\alpha\leq C\,\sigma^n\, \norm{f}_\alpha\;. $$

\item[(A3)]  The operators $Q_{K,z\,\xi}$ act continuously
on the Banach algebras $\B_\alpha$, uniformly in $(K,\mu,\xi)\in\Xfrk$ and $z$ small. More precisely, we assume there are constants $b>0$ and $M>0$ such that
for $i=0,1,2$,  $\vert z \vert <b$ and $f\in\B_\alpha$,
$$Q_{K,z\,\xi}(f\,\xi^i)\in\B_\alpha\quad \text{ and }\quad 
 \norm{ Q_{K,z\,\xi}(f\, \xi^i) }_\alpha \leq M\,\norm{f}_\alpha\;.
$$

\item[(A4)]  The family of functions
$\Xfrk\ni (K,\mu,\xi)\mapsto Q_{K,z\,\xi}$, indexed in  $\vert z \vert \leq b$,
is  H\"older equi-continuous in the sense that there exists $0<\theta\leq 1$ such that for all
$\vert z \vert \leq b$, $f\in\B_\alpha$ and $(K_1,\mu_1,\xi_1),(K_2,\mu_2,\xi_2)\in\Xfrk$,
$$ \norm{Q_{K_1, z \,\xi_1} f - Q_{K_2,z\,\xi_2 } f }_\infty \leq
		 M\, \norm{f}_\alpha\, \dist((K_1,\mu_1,\xi_1),(K_2,\mu_2,\xi_2))^ \theta \;.$$

\end{enumerate}

The interval $[\alpha_1,\alpha_0]$ will called as the {\em range} of the scale of Banach algebras. 
In the fiber LDT theorem we will need to take $\alpha_0$ small enough to have contraction in (A2), but at the same time we need $\alpha_1$ bounded away from $0$ to have uniformity in this contraction. 
The need for the condition $\alpha_1<\frac{\alpha_0}{2}$ is explained in remark~\ref{Holder continuity}.

The positive constants  $C$, $\sigma$, $M$,  $b$ and $\theta$   above will be called the {\em setting constants}.

Examples of contexts satisfying all  assumptions (B1)-(B7) and
(A1)-(A4) are provided by the applications in sections ~\ref{random_base_ldt} and ~\ref{random_fiber_ldt}.

The symmetry assumption (A1) allows us to reduce deviations below average to
 deviations above average, thus shortening the arguments.

(A2) is the main assumption: all Markov operators
$Q_K:\B_\alpha\to\B_\alpha$ are quasi-compact and simple, uniformly in $(K,\mu,\xi)\in\Xfrk$. This will imply that, possibly decreasing $b$, all Laplace-Markov operators $Q_{K,z\,\xi}:\B_\alpha\to\B_\alpha$ are also quasi-compact and simple, uniformly in $(K,\mu,\xi)\in\Xfrk$ and $\abs{z}<b$. 

(A3) is a regularity assumption.
The operators   $Q_{K, z\,\xi}$ act  continuously on $\B_\alpha$,
uniformly in $(K,\mu,\xi)\in\Xfrk$ and $\abs{z}<b$. 
Moreover, it implies that $\mathbb{D}_b\ni z\mapsto Q_{K, z\,\xi}\in \mathcal{L}(\B_\alpha)$,
is an  analytic function. 
 
Finally, (A4) implies that the function  $(K,\mu,\xi)\mapsto \lambda_{K,\xi}(z)$ is  uniformly H\"older continuous. 
Here $\lambda_{K,\xi}(z)$ denotes the maximal eigenvalue of
$Q_{K, z\,\xi}$,

These facts follow from the propositions stated and proved in the rest of this subsection.

Hypothesis (A3) implies that 
$Q_{K, z\xi}\in \mathcal{L}(\B_\alpha)$, for all $z\in\mathbb{D}_b$. 
In particular the function
$Q_{K,\ast\xi}:\mathbb{D}_b\to \mathcal{L}(\B_\alpha)$,
$z\mapsto Q_{K,z\xi}$,  is well-defined,
 for every $(K,\mu,\xi)\in \Xfrk$.

\begin{proposition} \label{Qz:analytic} 
The function $Q_{K,\ast\xi}:\mathbb{D}_b\to \mathcal{L}(\B_\alpha)$  is analytic with
$$    \frac{d}{dz} Q_{K, z\,\xi} (f)=Q_{K, z\,\xi}(f\,\xi) \; \text{ for }\,
f\in\B_\alpha\,,$$
for all $(K,\mu,\xi)\in\Xfrk$, and $\alpha_1\leq \alpha\leq\alpha_0$.
\end{proposition}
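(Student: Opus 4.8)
The plan is to establish analyticity by exhibiting the candidate derivative and showing it is the genuine Fréchet derivative, then to iterate. First I would fix $(K,\mu,\xi)\in\Xfrk$ and $\alpha\in[\alpha_1,\alpha_0]$, and define the candidate first derivative operator $D_z\in\mathcal L(\B_\alpha)$ by $D_z f := Q_{K,z\xi}(f\,\xi)$; this lies in $\mathcal L(\B_\alpha)$ and is uniformly bounded in operator norm by $M$ thanks to (A3) (the case $i=1$). Then for $z,z+w\in\mathbb D_b$ I would write, pointwise,
$$
\bigl(Q_{K,(z+w)\xi}f - Q_{K,z\xi}f - w\,D_z f\bigr)(x)
= \int_\Sigma f(y)\,e^{z\xi(y)}\bigl(e^{w\xi(y)}-1-w\,\xi(y)\bigr)\,K(x,dy)\;.
$$
Using the elementary bound $|e^{s}-1-s|\le \tfrac{1}{2}s^2 e^{|s|}$ for real $s$, the integrand is dominated by $\tfrac{1}{2}|w|^2\,|f(y)|\,e^{(|z|+|w|)\,\xi(y)}\,\xi(y)^2$. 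The difference quotient therefore equals $\tfrac{1}{2}w\cdot Q_{K,(z+w)\xi}\!\bigl((f\,g_w)\,\xi^2\bigr)$ where $g_w$ is a bounded analytic function of $w$; invoking (A3) with $i=2$ gives a bound $\le \tfrac12 M\,|w|\,\norm{f\,g_w}_\alpha$, and since $\B_\alpha$ is a Banach algebra this is $O(|w|)\,\norm{f}_\alpha$ uniformly. This shows $Q_{K,\ast\xi}$ is (complex) differentiable at $z$ with derivative $D_z$, i.e. the displayed formula holds.

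Next I would promote differentiability to analyticity. One clean route: a function from an open subset of $\C$ into a complex Banach space that is everywhere complex-differentiable is automatically analytic (the vector-valued version of Goursat/Morera, or equivalently weak analyticity via Hahn–Banach applied to $z\mapsto\Lambda(Q_{K,z\xi}f)$ for each $\Lambda\in\mathcal L(\B_\alpha)^*$, which is a scalar holomorphic function by the computation above). Either formulation yields that $Q_{K,\ast\xi}:\mathbb D_b\to\mathcal L(\B_\alpha)$ is analytic, and then differentiating the identity $\frac{d}{dz}Q_{K,z\xi}(f)=Q_{K,z\xi}(f\,\xi)$ repeatedly (each step legitimized by the same argument applied to $f\,\xi$, $f\,\xi^2,\dots$, all of which stay in $\B_\alpha$ by (A3)) gives $\frac{d^k}{dz^k}Q_{K,z\xi}(f)=Q_{K,z\xi}(f\,\xi^k)$; only $i=0,1,2$ are actually needed for first-order analyticity, which already suffices for the statement.

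The main obstacle is purely one of bookkeeping the uniformity and the domination: one must be careful that the second-order Taylor remainder of $e^{w\xi}$ is controlled in the $\norm{\cdot}_\alpha$-norm and not merely pointwise, which is exactly why (A3) is stated for $f\,\xi^i$ with $i$ up to $2$ rather than just for $f$ — the factor $\xi^2$ from the remainder must be absorbable. A minor subtlety is that the auxiliary multiplier $g_w(y)=\int_0^1(1-t)e^{tw\xi(y)}\,dt$ must be checked to have $\norm{g_w}_\alpha$ bounded for $|w|$ small; this follows since $\xi\in\B_\alpha$, $\B_\alpha$ is a Banach algebra, and the integral converges in $\B_\alpha$. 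With these points addressed the proof is complete.
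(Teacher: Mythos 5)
Your argument is, at its core, the same Taylor-remainder estimate the paper uses, and the bound you obtain is the right one. But there is a genuine gap in the way you package the remainder. You factor the second-order Taylor remainder of $e^{w\xi}$ into an auxiliary multiplier $g_w(y)=\int_0^1(1-t)e^{tw\xi(y)}\,dt$ and then apply (A3) to $(f\,g_w)\,\xi^2$, which requires $fg_w\in\B_\alpha$ and hence $g_w\in\B_\alpha$. As you note at the end, this needs $\xi\in\B_\alpha$. That, however, is \emph{not} among the abstract hypotheses: (A3) asserts bounds on $\norm{Q_{K,z\xi}(f\xi^i)}_\alpha$ for $f\in\B_\alpha$ and $i\le 2$, but it does not place $\xi$ itself (let alone $e^{tw\xi}$) in $\B_\alpha$. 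Proposition~\ref{Qz:analytic} is stated at the abstract level of section~\ref{random_setting}, so using $\xi\in\B_\alpha$ is an unjustified extra assumption, even though it happens to hold in the two concrete applications. (There is also a small algebraic slip: with your definition of $g_w$, the identity should read with $Q_{K,z\xi}$ and no factor $\tfrac12$, or with a compensating $e^{-w\xi}$ folded into $g_w$; as written the two sides do not match.)

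The paper avoids this entirely by \emph{not} extracting the multiplier. It keeps the exponential inside the Laplace–Markov operator: starting from the scalar integral-remainder identity
$\frac{e^{zb}-e^{z_0 b}}{z-z_0}-b\,e^{z_0 b}=\int_{z_0}^{z}b^2 e^{\zeta b}\,\frac{z-\zeta}{z-z_0}\,d\zeta$,
substituting $b=\xi(y)$, multiplying by $f(y)\,K(x,dy)$ and integrating over $\Sigma$, one gets directly
$\frac{Q_zf-Q_{z_0}f}{z-z_0}-Q_{z_0}(f\xi)=\int_{z_0}^{z}Q_\zeta(f\xi^2)\,\frac{z-\zeta}{z-z_0}\,d\zeta$.
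Every object that appears is exactly of the form $Q_\zeta(f\xi^2)$ with $\zeta\in\mathbb D_b$, so (A3) with $i=2$ bounds the $\norm{\cdot}_\alpha$-norm of the integrand uniformly by $M\norm{f}_\alpha$, giving $O(|z-z_0|)$ without ever touching $\norm{g_w}_\alpha$. Your proof becomes correct and equivalent to the paper's once you replace the $g_w$ step by this: write the remainder as $w^2\int_0^1(1-t)\,Q_{(z+tw)\xi}(f\xi^2)\,dt$ (or the equivalent contour integral), and apply (A3) along the segment rather than to $fg_w$.
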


\begin{proof}
Given $b\in\R$, for all $z,z_0\in\C$,
$$ \frac{ e^{z\,b}-e^{z_0\,b} }{z-z_0} - b\, e^ {z_0\,b} 
= \int_{z_0}^{z} b^2\,e^ {\zeta\,b}\,\frac{z-\zeta}{z-z_0}\, d\zeta\;.$$
This is the first order Taylor remainder formula for
$h(z)=e^{b\,z}$ at $z=z_0$. To shorten notation we write $Q_z$ for
$Q_{K,z\,\xi}$. Replacing $b$ by $\xi(y)$,
multiplying by $f(y)\,K(x,dy)$ and integrating over $\Sigma$
we get
$$ \frac{ Q_z f - Q_{z_0} f}{z-z_0} - Q_{z_0} (f\,\xi) 
= \int_{z_0}^{z} Q_{\zeta}(f\,\xi^2)\,\frac{z-\zeta}{z-z_0}\, d\zeta\;.$$
Hence, by (A3), for all $z\in\mathbb{D}_b$,
\begin{align*}
\norm{ \frac{ Q_z f - Q_{z_0} f}{z-z_0} - Q_{z_0} (f\,\xi) }_\alpha
&\leq  \int_{z_0}^{z} \norm{ Q_{\zeta}(f\,\xi^2)}_\alpha\,
\frac{\abs{z-\zeta}}{\abs{z-z_0}} \, \vert d\zeta\vert\\
&\leq M\,\norm{f}_\alpha\,\abs{z-z_0}\;,
\end{align*}
which proves that  the following limit exists in $\mathcal{L}(\B_\alpha)$,
$$ \lim_{z\to z_0} \frac{ Q_z - Q_{z_0} }{z-z_0} = Q_{z_0} (\xi\,\cdot)\;.$$
Notice that (A3) also implies the operator
$Q_{z_0}(\xi\,\cdot)(f):=Q_{z_0}(\xi\,f)$ is in 
$\mathcal{L}(\B_\alpha)$.
\end{proof}

Next proposition focus  on  the quasi-compactness and simplicity of  $Q_z=Q_{K, z\,\xi}$,
and is proved using arguments in ~ \cite{LP,Bou}.

\begin{proposition} \label{abstr:qcs}
Consider a metric space $\Xfrk$  of observed Mar\-kov systems 
satisfying  (A1)-(A4) in the range $[\alpha_1,\alpha_0]\subset (0,1]$ with setting constants $C$, $\sigma$, $M$,  $b$ and $\theta$.

 Given $\varepsilon>0$ there exist  $C',M'>0$  and
$0<b_0<b$ such that the following statement holds: 
for all $(K,\mu,\xi)\in\Xfrk$, $z\in\mathbb{D}_{b_0}$ and
$\alpha_1\leq \alpha\leq \alpha_0$
there exist: a one dimensional subspace
$E_z=E_{K, z\,\xi}\subset \B_\alpha$, a hyperplane $H_z=H_{K, z\,\xi}\subset \B_\alpha$, a number $\lambda(z)=\lambda_{K,\xi}(z)\in\C$,
and a linear map $P_z=P_{K, z\,\xi}\in \mathcal{L}(\B_\alpha)$  
such that 
\begin{enumerate}
\item[(1)] $\B_\alpha=E_{z}\oplus H_{z}$ is a $Q_{z}$-invariant decomposition,
\item[(2)] $P_{z}$ is a projection onto $E_{z}$, parallel to $H_{z}$,
\item[(3)]  $Q_{z}\circ P_{z} = P_{z}\circ Q_{z} =\lambda(z)\,P_{z}$,
\item[(4)] $Q_{z} f = \lambda(z)\, f$ for all $f\in E_{z}$,
\item[(5)] $z\mapsto \lambda(z)$ is analytic in a neighborhood of $\overline{\mathbb{D}}_{b_0}$,
\item[(6)] $\displaystyle \abs{\lambda(z)}\geq 1-\varepsilon$.
\end{enumerate}
Furthermore, for all  $f\in\B_\alpha$, 
\begin{enumerate}
\item[(7)] $\displaystyle \norm{ Q_{z}^n f - \lambda(z)^n\,  P_{z} f }_\alpha
\leq C'\, (\sigma + \varepsilon)^n\,\norm{f}_\alpha$,
\item[(8)] $\displaystyle \norm{ P_z\,f }_\alpha \leq C'\,\norm{f}_\alpha$,
\item[(9)] $\displaystyle \norm{ P_z\,f -P_0\,f }_\alpha \leq C' \, \abs{z}\,\norm{f}_\alpha$,
\end{enumerate}
and for all $z\in\mathbb{D}_{b_0}$
and  $(K_1,\mu_1,\xi_1), (K_2,\mu_2,\xi_2)\in\Xfrk$,  
\begin{enumerate}
\item[(10)] $\displaystyle \abs{\lambda_{K_1,\xi_1}(z) - \lambda_{K_2,\xi_2}(z) } \leq M'\, d((K_1,\mu_1,\xi_1),(K_2,\mu_2,\xi_2))^ {\frac{\theta}{2}} $.
\end{enumerate}
\end{proposition}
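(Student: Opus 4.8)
The plan is to obtain the operators $P_z$, $\lambda(z)$, etc.\ from the Riesz–Dunford holomorphic functional calculus applied to the analytic family $z\mapsto Q_{K,z\,\xi}$ supplied by Proposition~\ref{Qz:analytic}. First I would fix $\varepsilon>0$ and use (A2) to locate the spectrum of $Q_0=Q_K$: quasi-compactness and simplicity, together with the estimate $\norm{Q_K^n f-\langle f,\mu\rangle\one}_\alpha\le C\sigma^n\norm{f}_\alpha$, imply that $\sigma(Q_0)\subset\{1\}\cup\overline{\mathbb{D}}_\sigma$ with $1$ a simple eigenvalue (eigenvector $\one$, eigenprojection $P_0 f=\langle f,\mu\rangle\one$). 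Choose a circle $\Gamma=\{|w-1|=\delta\}$ with $\sigma<1-\delta$ and $\delta<\varepsilon$, so that $\Gamma$ separates the eigenvalue $1$ from the rest of $\sigma(Q_0)$ and stays in the region $|w|\ge 1-\varepsilon$. Since $z\mapsto Q_z$ is operator-norm continuous (indeed analytic) uniformly over $\Xfrk$ by (A3) and Proposition~\ref{Qz:analytic}, a standard Neumann-series / resolvent-perturbation argument gives a $b_0\in(0,b)$, uniform in $\Xfrk$ and in $\alpha\in[\alpha_1,\alpha_0]$, such that for $|z|<b_0$ the curve $\Gamma$ still encircles exactly a one–dimensional part of $\sigma(Q_z)$ and the resolvent $(w-Q_z)^{-1}$ is uniformly bounded on $\Gamma$. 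Defining
$$
P_z:=\frac{1}{2\pi i}\int_\Gamma (w-Q_z)^{-1}\,dw,\qquad
E_z:=P_z\B_\alpha,\qquad H_z:=(I-P_z)\B_\alpha,
$$
yields items (1)–(2) directly; $P_z$ is a rank-one $Q_z$-invariant projection, and $\lambda(z)$ is defined as the unique eigenvalue of $Q_z\vert_{E_z}$, equivalently $\lambda(z)=\operatorname{tr}(Q_zP_z)=\frac{1}{2\pi i}\int_\Gamma w\,(w-Q_z)^{-1}\,dw\big/\operatorname{rank}$, giving (3)–(4). Analyticity of $z\mapsto Q_z$ plus analyticity of $w\mapsto(w-Q_z)^{-1}$ make $z\mapsto P_z$ and $z\mapsto\lambda(z)$ analytic on a neighbourhood of $\overline{\mathbb{D}}_{b_0}$ (item (5)); and since $\lambda(z)$ lies inside $\Gamma$, which sits in $|w|\ge 1-\varepsilon$, we get $|\lambda(z)|\ge 1-\varepsilon$, i.e.\ (6).

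For the quantitative estimates (7)–(9) I would write $Q_z^n=\lambda(z)^nP_z+(Q_z(I-P_z))^n$ using the spectral decomposition, and bound the spectral radius of $Q_z\vert_{H_z}$: by upper semicontinuity of the spectrum and the $b_0$-smallness, $\sigma(Q_z\vert_{H_z})\subset\{|w|\le\sigma+\varepsilon\}$, and the power estimate (7) follows from $\norm{(Q_z\vert_{H_z})^n}\le C'(\sigma+\varepsilon)^n$ (one can get the constant $C'$ uniform because the resolvent on $\Gamma$ and on a slightly larger contour around $\overline{\mathbb{D}}_{\sigma+\varepsilon}$ is uniformly bounded; alternatively one first proves it at $z=0$ from (A2) and then perturbs). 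Bound (8) is just uniform boundedness of $P_z$ (uniform bound of the resolvent on $\Gamma$ times length of $\Gamma$). Bound (9) is the Lipschitz estimate $\norm{P_z-P_0}\le C'|z|$, obtained from $P_z-P_0=\frac{1}{2\pi i}\int_\Gamma\big[(w-Q_z)^{-1}-(w-Q_0)^{-1}\big]dw=\frac{1}{2\pi i}\int_\Gamma (w-Q_z)^{-1}(Q_z-Q_0)(w-Q_0)^{-1}dw$ together with $\norm{Q_z-Q_0}\le M'|z|$, which comes from Proposition~\ref{Qz:analytic} (the derivative $Q_z(\xi\,\cdot)$ is uniformly bounded by $M$ via (A3), so $z\mapsto Q_z$ is uniformly Lipschitz near $0$).

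The one genuinely different ingredient is the H\"older dependence (10). Here the subtlety is that (A4) only controls $\norm{Q_{K_1,z\xi_1}f-Q_{K_2,z\xi_2}f}_\infty$ — the \emph{weak} norm — not the $\B_\alpha$ norm, so I cannot directly plug $Q_{K_1,z\xi_1}-Q_{K_2,z\xi_2}$ into the resolvent identity in $\mathcal{L}(\B_\alpha)$. The standard device (as in \cite{LP,Bou}) is an interpolation argument: write $D:=d((K_1,\mu_1,\xi_1),(K_2,\mu_2,\xi_2))$, so $\norm{(Q_{z,1}-Q_{z,2})f}_\infty\le M\,D^\theta\norm{f}_\alpha$, while from (A2)/(A3) one has the trivial bound $\norm{(Q_{z,1}-Q_{z,2})f}_\alpha\le 2M\norm{f}_\alpha$; interpolating using (B7) (convexity of $\alpha\mapsto\log v_\alpha$) between the $\B_0=L^\infty$ estimate and the $\B_{\alpha_0}$ estimate, at a level $\alpha$ with $\alpha_1\le\alpha\le\alpha_0$, one gets $\norm{(Q_{z,1}-Q_{z,2})f}_{\alpha'}\le C\,D^{\theta(\alpha_0-\alpha)/\alpha_0}\norm{f}_{\alpha}$ for a suitable intermediate exponent $\alpha'<\alpha$ — this is exactly where the hypothesis $\alpha_1<\alpha_0/2$ enters, guaranteeing room for the interpolation to land back inside the range $[\alpha_1,\alpha_0]$. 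Since $\lambda_{K,\xi}(z)=\operatorname{tr}\big(Q_{z}P_{z}\big)$ (rank-one), and both $Q_z$ and $P_z$ (the latter again via a resolvent/Neumann series built from $Q_{z,i}$) depend on $(K,\mu,\xi)$ through such a H\"older-in-the-weaker-norm perturbation, one obtains $|\lambda_{K_1,\xi_1}(z)-\lambda_{K_2,\xi_2}(z)|\le M'D^{\theta/2}$, the exponent $\theta/2$ being the price of the single interpolation step. I expect step (10) — in particular bookkeeping the interpolation exponents so that everything stays uniform in $\alpha\in[\alpha_1,\alpha_0]$ and recovering exactly $\theta/2$ — to be the main obstacle; items (1)–(9) are routine Riesz-projection perturbation theory once the uniform resolvent bound on $\Gamma$ is established.
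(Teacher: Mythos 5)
Your proposal is correct and follows essentially the same route as the paper: Riesz–Dunford spectral projections via contour integrals, Neumann-series perturbation of the resolvent (the paper's Lemma~\ref{inverses} / Lemma~\ref{spectral:bounds}), and then interpolation through (B7) between the $L^\infty$ bound of (A4) and the $\B_\alpha$ bound of (A3) to obtain the H\"older estimate~(10). The only cosmetic difference is that you work with a single contour $\Gamma$ around $1$, whereas the paper uses two explicit circles $\Gamma_0$ (around $0$, radius $\frac{1+2\sigma}{3}$) and $\Gamma_1$ (around $1$, radius $\frac{1-\sigma}{3}$) to define $P_z$, $L_z$, $N_z$ separately; this is immaterial. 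Two small imprecisions worth flagging: in the paper the interpolation for (10) is done at the midpoint $\alpha/2$ between $\alpha_0'=0$ and $\alpha_2'=\alpha$, which directly produces the $D^{\theta/2}$ rate (your interim formula with exponent $\theta(\alpha_0-\alpha)/\alpha_0$ is not the one used), and the constraint $\alpha_1<\alpha_0/2$ is needed precisely so that the landing level $\alpha/2$ stays in $[\alpha_1,\alpha_0]$, where Lemma~\ref{spectral:bounds} controls the resolvents — you identify the mechanism correctly but the exponent bookkeeping should be fixed. Also, for (7) the paper proceeds exactly as your parenthetical alternative: it fixes $p$ with $C\sigma^p\le(\sigma+\varepsilon/2)^p$ from (A2) at $z=0$, then perturbs $\|N_z^p\|$ using Lemma~\ref{spectral:bounds} to force $\|N_z^p\|<(\sigma+\varepsilon)^p$ for $|z|<b_0$.
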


\medskip

Given $(K,\mu,\xi)\in\Xfrk$, define the  operators
\begin{align}\label{Pz}
 P_{z} = P_{K,z\xi} := \frac{1}{2\pi i} \int_{\Gamma_1} R_{z}(w) \, dw  \\
 L_{z} = L_{K,z\xi} := \frac{1}{2\pi i} \int_{\Gamma_1} w\,R_{z}(w) \, dw  \label{Lz}\\
 N_{z} = N_{K,z\xi} := \frac{1}{2\pi i} \int_{\Gamma_0} w\,R_{z}(w) \, dw  \label{Nz}
\end{align}
where $\Gamma_0$ and $\Gamma_1$ are the positively oriented circles 
\begin{align*}
\Gamma_0 &=\{\, w\in\C\,:\, \abs{w}=\frac{1+2\sigma}{3}\,\}\, ,\\
\Gamma_1 &=\{\, w\in\C\,:\, \abs{w-1}=\frac{1-\sigma}{3}\,\}\,,
\end{align*}
and $R_{z}(w) = R_{K,z\xi}$ stands for the resolvent of  $Q_{K, z\xi}$,
$$R_{z}(w):= \left( w\,I- Q_{K, z\xi} \right)^{-1}\;.$$

\begin{lemma} \label{inverses} 
Given a normed space $(\B,\norm{\cdot})$ and linear operators $T,T_0\in \mathcal{L}(\B)$, \\
if \, $T_0$ \, is invertible with  \,
$\norm{T_0^{-1}}\leq C$\, and \,$\norm{T-T_0}\leq \varepsilon$ \,
then
\begin{enumerate}
\item $T$ is invertible, with
$\displaystyle \norm{T^{-1}}\leq \frac {C}{1-C\,\varepsilon}$,
\item $\displaystyle \norm{T^{-1} - T_0^{-1}}\leq \frac{ C^2 }{1-C\,\varepsilon}\, \norm{T-T_0}$.
\end{enumerate}
\end{lemma}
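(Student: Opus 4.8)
The statement to prove is Lemma~\ref{inverses}: a standard Neumann series perturbation result for invertibility of bounded linear operators. The plan is straightforward.

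\textbf{Approach.} The plan is to write $T = T_0 - (T_0 - T)$ and factor out $T_0$ on the left, obtaining $T = T_0\bigl(I - T_0^{-1}(T_0 - T)\bigr)$. Setting $S := T_0^{-1}(T_0 - T)$, we have $\norm{S} \leq \norm{T_0^{-1}}\,\norm{T_0 - T} \leq C\varepsilon$. As long as $C\varepsilon < 1$ (which we may assume, since otherwise the bound $\frac{C}{1-C\varepsilon}$ is vacuous or negative and there is nothing to prove), the operator $I - S$ is invertible by the Neumann series $\sum_{k\geq 0} S^k$, with $\norm{(I-S)^{-1}} \leq \sum_{k\geq 0}\norm{S}^k = \frac{1}{1-\norm{S}} \leq \frac{1}{1 - C\varepsilon}$. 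Therefore $T$ is invertible with $T^{-1} = (I-S)^{-1}T_0^{-1}$, and
$$\norm{T^{-1}} \leq \norm{(I-S)^{-1}}\,\norm{T_0^{-1}} \leq \frac{C}{1 - C\varepsilon}\,,$$
which proves (1).

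\textbf{Second estimate.} For (2) I would use the resolvent-type identity $T^{-1} - T_0^{-1} = T^{-1}(T_0 - T)T_0^{-1}$, which follows immediately from $T^{-1}(T_0 - T)T_0^{-1} = T^{-1}T_0 T_0^{-1} - T^{-1}T T_0^{-1} = T^{-1} - T_0^{-1}$. Taking norms and using part (1) together with the hypothesis $\norm{T_0^{-1}} \leq C$ gives
$$\norm{T^{-1} - T_0^{-1}} \leq \norm{T^{-1}}\,\norm{T_0 - T}\,\norm{T_0^{-1}} \leq \frac{C}{1-C\varepsilon}\cdot\norm{T - T_0}\cdot C = \frac{C^2}{1 - C\varepsilon}\,\norm{T - T_0}\,,$$
which is exactly the claimed bound.

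\textbf{Main obstacle.} There is essentially no real obstacle here: this is a textbook Neumann series argument. The only point requiring a word of care is the implicit assumption $C\varepsilon < 1$ needed for convergence of the Neumann series; when $C\varepsilon \geq 1$ the asserted inequalities are not meaningful constraints (the right-hand sides are non-positive or undefined), so the lemma is only being claimed in the regime $C\varepsilon < 1$, and it is harmless to state the proof under that assumption. One should also note that the order of multiplication ($T_0^{-1}$ on the left versus right) is a matter of convention and either works symmetrically.
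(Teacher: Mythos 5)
Your proof is correct and follows essentially the same route as the paper: both invert $T$ via the Neumann series around $T_0$ (you factor $T = T_0(I - S)$ with $S = T_0^{-1}(T_0 - T)$, while the paper writes the series for $T^{-1}$ directly — the same computation), and both derive (2) from the resolvent-type identity $T^{-1} - T_0^{-1} = -T^{-1}(T - T_0)T_0^{-1}$. Your remark about the implicit assumption $C\varepsilon < 1$ is a reasonable clarification the paper leaves tacit.
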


\begin{proof}
Since
$ T^ {-1} =\sum_{n=0}^ \infty (-1)^n\,(T_0^{-1}\, (T-T_0))^n \, T_0^{-1}$, we have
$$\norm{ T^ {-1} }\leq 
\sum_{n=0}^ \infty  \norm{T_0^{-1}}^{n+1} \norm{T-T_0}^n   =\frac{\norm{T_0^{-1}}  }{1-\norm{T_0^{-1}}  \norm{T-T_0} } 
\leq \frac{C }{1-C\,\varepsilon }  \;. $$
For (2) use the formula\,
$T^{-1}-T_0^{-1} = - T^{-1}\, (T-T_0)\, T_0^{-1}$.
\end{proof}

\begin{lemma} \label{spectral:bounds}
There exist  constants $C_0>0$ and $0<b_0<b$,  
depending only on $C,M,\sigma$ and $b$, such that for 
$(K,\mu,\xi)\in\Xfrk$,  $z\in\mathbb{D}_{b_0}$, and any of the five
operators $T_{z}= Q_{K, z\xi}$,  $L_{z}$, $N_{z}$, $P_{z}$, and $R_{z}(w)$  with $w\notin {\rm int}(\Gamma_0)\cup {\rm int}(\Gamma_1)$, 
\begin{enumerate}
\item $\displaystyle \norm{T_{z}} \leq C_0$,
\item $\displaystyle \norm{T_{z} - T_{0}}\leq C_0\,\abs{z}$.
\end{enumerate}
\end{lemma}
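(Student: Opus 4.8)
We want uniform bounds, in $(K,\mu,\xi)\in\Xfrk$ and $z\in\mathbb{D}_{b_0}$, on the operator norm of $Q_{K,z\xi}$, $L_z$, $N_z$, $P_z$ and the resolvent $R_z(w)$ at points $w$ on the two circles $\Gamma_0,\Gamma_1$ (equivalently $w$ away from their interiors), together with the corresponding Lipschitz-in-$z$ estimates. The organizing idea is that all five operators are built from $Q_{K,z\xi}$ by operations (iteration via Neumann series for the resolvent, contour integration for $L_z,N_z,P_z$) that are quantitatively controlled once we control $Q_{K,z\xi}$ itself and $Q_{K,0}=Q_K$.

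First I would dispose of $Q_{K,z\xi}$. By (A3) (with $i=0$) we have $\norm{Q_{K,z\xi}}_\alpha\le M$ for all $\abs{z}<b$ and all $(K,\mu,\xi)\in\Xfrk$; this is already uniform. For the Lipschitz estimate, Proposition~\ref{Qz:analytic} gives $\frac{d}{dz}Q_{K,z\xi}f = Q_{K,z\xi}(f\xi)$, and (A3) with $i=1$ bounds $\norm{Q_{K,z\xi}(f\xi)}_\alpha\le M\norm{f}_\alpha$ uniformly; integrating along the segment from $0$ to $z$ yields $\norm{Q_{K,z\xi}-Q_K}_\alpha\le M\abs{z}$. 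So the claim holds for $T_z=Q_{K,z\xi}$ with constant $M$.

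Next, the resolvent. Here I would use the uniform quasi-compactness (A2). The point is that $R_0(w)=(wI-Q_K)^{-1}$ is uniformly bounded for $w$ ranging over the compact set $\Gamma_0\cup\Gamma_1$ (and more generally for $w$ outside the two open disks), with a bound depending only on $C,\sigma$: from the decomposition $\B_\alpha=\langle\one\rangle\oplus H_0$ of (A2) one has $R_0(w)= \frac{1}{w-1}P_0^{(0)} + (wI-Q_K|_{H_0})^{-1}(I-P_0^{(0)})$, where $P_0^{(0)}$ is the spectral projection onto the constants, and on $H_0$ the spectral radius of $Q_K$ is $\le\sigma<1$ so the Neumann series $\sum (w^{-1}Q_K|_{H_0})^n w^{-1}$ converges with a bound controlled by $C$, $\sigma$ and the distance of $\Gamma_0,\Gamma_1$ from the unit circle and from $1$ (these distances are fixed, $\tfrac{1-\sigma}{3}$ and $\tfrac{1+2\sigma}{3}$ vs.\ $\sigma$). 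Call this uniform bound $C_1=C_1(C,\sigma)$. Then, since $\norm{Q_{K,z\xi}-Q_K}_\alpha\le M\abs{z}$, choosing $b_0$ small enough that $M b_0\le \tfrac{1}{2C_1}$ lets me apply Lemma~\ref{inverses} with $T_0=wI-Q_K$, $T=wI-Q_{K,z\xi}$: part (1) gives $\norm{R_z(w)}_\alpha\le \frac{C_1}{1-C_1 M\abs{z}}\le 2C_1$, and part (2) gives $\norm{R_z(w)-R_0(w)}_\alpha\le \frac{C_1^2}{1-C_1 M\abs{z}}M\abs{z}\le 2C_1^2 M\abs{z}$. This establishes (1) and (2) for $T_z=R_z(w)$, uniformly in $w\notin\mathrm{int}(\Gamma_0)\cup\mathrm{int}(\Gamma_1)$, with constants depending only on $C,M,\sigma$ (the length/geometry of $\Gamma_0,\Gamma_1$ being fixed).

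Finally, $P_z$, $L_z$, $N_z$ follow immediately by feeding the resolvent bounds into the contour-integral formulas \eqref{Pz}--\eqref{Nz}: e.g.\ $\norm{P_z}_\alpha\le \frac{1}{2\pi}\,\mathrm{length}(\Gamma_1)\sup_{w\in\Gamma_1}\norm{R_z(w)}_\alpha \le \frac{1}{2\pi}\cdot\frac{2\pi(1-\sigma)}{3}\cdot 2C_1$, and similarly for $L_z$ (extra factor $\sup_{\Gamma_1}\abs{w}\le 1+\tfrac{1-\sigma}{3}$) and $N_z$ (over $\Gamma_0$, factor $\sup_{\Gamma_0}\abs{w}=\tfrac{1+2\sigma}{3}$); for the Lipschitz bounds one integrates $\norm{R_z(w)-R_0(w)}_\alpha\le 2C_1^2 M\abs{z}$ against $\abs{dw}$. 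Taking $C_0$ to be the largest of the resulting constants (and the $M$ from the $Q_{K,z\xi}$ step) gives a single constant depending only on $C,M,\sigma,b$ that works for all five operators. The only mild subtlety — the main obstacle, such as it is — is making sure the uniform resolvent bound $C_1$ on $\Gamma_0\cup\Gamma_1$ genuinely depends only on $C$ and $\sigma$ and not on the particular $K$; this is exactly what (A2) delivers, since the constants $C,\sigma$ there are uniform over $\Xfrk$, and the two contours were chosen once and for all in terms of $\sigma$ to sit strictly between the spectral radius $\sigma$ on $H_0$ and the eigenvalue $1$.
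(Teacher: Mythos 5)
Your proposal is correct and follows essentially the same route as the paper: bound $Q_{K,z\xi}$ and its $z$-derivative via (A3) and Proposition~\ref{Qz:analytic}; bound $R_0(w)$ for $w$ outside the contours via the spectral splitting of (A2) and a Neumann series (your $\langle\one\rangle\oplus H_0$ decomposition is just the paper's $Q_0^n=P_0+N_0^n$ rewritten), then pass to $R_z(w)$ by the perturbation Lemma~\ref{inverses}; and finally estimate $P_z,L_z,N_z$ by integrating the resolvent bounds over $\Gamma_0,\Gamma_1$. The constants are tracked the same way, so no further comment is needed.
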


\begin{proof}
First note that $\norm{L_{0}}=\norm{P_{0}}=1$ and $\norm{N_{0}}\leq C\,\sigma$, so that $\norm{Q_{0}}=\norm{L_{0}+N_{0}}\leq 1 + C\,\sigma$.
Let us go through the given operators, one at a time. 
Assume $0<b_0<b$ is small and take $z\in\mathbb{D}_{b_0}$.
For $Q_{K, z\,\xi}$,
item (1) follows from assumption (A3), taking $C_0:=M$,
while (2) follows from (A3) and  Proposition~\ref{Qz:analytic} with the same constant.
For the operator $R_{z}(w)$, we have
\begin{align*}
R_{0}(w) &= w^{-1}\, (I- w^{-1}\,Q_{0})^{-1} =  w^{-1}\,\sum_{n=0}^\infty \frac{Q_{0}^n}{w^n}\\
&= w^{-1}\,\sum_{n=0}^\infty \frac{P_{0}}{w^n}  +   w^{-1}\,\sum_{n=0}^\infty \frac{N_{0}^n}{w^n} =   \frac{P_{0}}{w-1}    +   \sum_{n=0}^\infty \frac{N_{0}^n}{w^{n+1}}\;.
\end{align*}
Notice also that $w\notin {\rm int}(\Gamma_0)\cup {\rm int}(\Gamma_1)$
implies   $\abs{w-1}\geq \frac{1-\sigma}{3}$ and
$\abs{w}\geq \frac{1+2\sigma}{3}$, and hence
\begin{align*}
\norm{R_{0}(w)} &\leq  
\frac{\norm{P_{0}}}{\abs{w-1}}   +  \frac{C}{\abs{w}} \, \sum_{n=0}^\infty \left(\frac{ \sigma}{\abs{w}}\right)^n\\
&\leq  
\frac{3}{1-\sigma}   +  \frac{3\,C}{1+2\sigma} \, \sum_{n=0}^\infty \left(\frac{ 3\,\sigma}{1+2\sigma}\right)^n = 
  \frac{3+3\,C}{1- \sigma} =:C_1\;. 
\end{align*}
Therefore, applying Lemma~ \ref{inverses} to 
$w\,I-Q_{z}$ and $w\,I-Q_{0}$,
item (1) holds with $C_2:= \frac{C_1}{1-C_1\,C_0\,b_0}$, while (2) holds with $C_3:=   \frac{C_1^2\,C_0}{1-C_1\,C_0\,b_0}$.  Of course we have to pick $0<b_0<b$ small enough to make sure  the denominators in constants $C_2$ and $C_3$ are both positive.
For the remaining operators
$P_{z}$, $L_{z}$ and $N_{z}$ we use the integral formulas
~\eqref{Pz},~\eqref{Lz} and~\eqref{Nz} to reduce to the previous case, using the same constants $C_2$ and $C_3$
as before.
\end{proof}

\begin{proof}[Proof of Proposition~\ref{abstr:qcs}] 
By Lemma~\ref{spectral:bounds} for all $\vert z\vert <b$ and
$w\notin {\rm int}(\Gamma_0)\cup {\rm int}(\Gamma_1)$, the operator norm $\norm{R_{z}(w)}$ is
uniformly bounded. This implies that the spectrum $\Sigma_{z}$ of $Q_{K, z\,\xi}$ is contained in
${\rm int}(\Gamma_0)\cup {\rm int}(\Gamma_1)$, and hence we can write
$\Sigma_{z}=\Sigma_{z}^0\cup \Sigma_{z}^1$ 
with $ \Sigma_{z}^i\subset {\rm int}(\Gamma_i)$, for $i=0,1$.
By the spectral theory  of bounded operators on Banach spaces,  see  for instance chapter IX in~\cite{RN}, if we denote by $H_{z}$ and $E_{z}$ the subspaces
of  $\B_\alpha$, respectively associated to the spectrum 
components $\Sigma_{z}^0$ and $\Sigma_{z}^1$, then for all 
$z\in\mathbb{D}_{b_0}$, with $b_0>0$ small enough,
\begin{enumerate}
\item[(a)] the operators
$Q_{z}$, $P_{z}$, $L_{z}$ and $N_{z}$ commute,
\item[(a)] $L_{z} f= Q_{z} f\in E_{z}$, for all $f\in E_{z}$,
\item[(b)] $N_{z} f= Q_{z} f\in H_{z}$, for all $f\in H_{z}$,
\item[(c)] $Q_{z}= L_{z} + N_{z}$,
\item[(d)] $\B_\alpha=E_{z}\oplus H_{z}$,
\item[(e)] $P_{z}$ is the projection to $E_{z}$ parallel to $H_{z}$.
\end{enumerate} 
For $z=0$, the condition (A2) implies that the operator
 $Q_{0}\vert_{\B_\alpha}$ is quasi-compact and simple,  with spectrum $\Sigma_{0}^0\subset \mathbb{D}_\sigma$
and $\Sigma_{0}^1=\{1\}$.
Since $1$ is a simple eigenvalue,  $E_{0}=\langle \one\rangle$ is the space of constant functions.
We must have
  $H_{0}=\{\, f\in\B_\alpha\,:\, \int f\,d\mu=0\,\}$ because the operator $Q_{0}$ acts invariantly  on this space, as a contraction with spectral radius $\leq  \sigma$.
Thus for all $f\in\B_\alpha$,
$P_{0} f= (\int f\,d\mu)\,\one$ and
$N_{0} f= Q_{0} f - (\int f\,d\mu)\,\one$.
Since $1$ is a simple eigenvalue of $Q_{0}$,
a continuity argument implies that $\Sigma_{z}^1$ is a singleton, i.e., $\Sigma_{z}^1=\{\lambda(z)\}$, for all $z\in\mathbb{D}_{b}$.
It follows easily that 
$\dim (E_{z})=1$, and $\lambda(z)=\langle L_{z} \one,\mu\rangle /\langle P_{z} \one,\mu\rangle$.
By perturbation theory, and Proposition~\ref{Qz:analytic},  the function 
$\lambda:\mathbb{D}_{b_0}\to \C$ is analytic. Hence,
to finish the proof of Proposition~\ref{abstr:qcs}, it is now enough
to establish  items (6)-(10).

Take $0<b_0<b$ according to Lemma~\ref{spectral:bounds}.
Fixing  a reference probability measure $\mu_0$ on $\Sigma$,
we can write, for all $z\in\mathbb{D}_{b}$,
\begin{equation}\label{lambda gamma}
\lambda_{K,\mu,\xi}(z) = \frac{\langle L_{K, z\,\xi}\one, \mu_0  \rangle}{\langle P_{K, z\,\xi}\one, \mu_0  \rangle}\;.
\end{equation}
Notice that by Lemma~\ref{spectral:bounds}, for all $(K,\mu,\xi)\in\Xfrk$,
$$ \langle P_{K, z\,\xi}\one,\mu_0 \rangle \geq 1-\norm{P_{K, z\,\xi}\one -P_{K,0}\one}_\alpha
\geq 1-C_0\,b_0\;.$$
Hence, for all  $z\in\mathbb{D}_{b_0}$,
\begin{align*}
\abs{\lambda_{K,\mu,\xi}(z)-1} &\leq \abs { 
\frac{\langle L_{K, z\,\xi}\one,\mu_0 \rangle }{ \langle P_{K, z\,\xi}\one,\mu_0 \rangle } -
\frac{\langle L_{K,0}\one,\mu_0 \rangle }{ \langle P_{K,0}\one,\mu_0 \rangle } } \\
&\leq  
\frac{\abs{ \langle L_{K, z\,\xi}\one-L_{K,0}\one ,\mu_0 \rangle} }{ 1-C_0\,b_0 }  +
\frac{C_0\, \abs{ \langle P_{K, z\,\xi}\one-P_{K,0}\one,\mu_0 \rangle} }{ ( 1-C_0\,b_0)^2 } \\
&\leq  
\frac{ C_0\,b_0 }{ 1-C_0\,b_0 }  +
\frac{C_0^2\,b_0  }{ ( 1-C_0\,b_0)^2 } =O(b_0)\;.
\end{align*}
Thus, given $\varepsilon>0$ we can make
$b_0>0$ small enough so that  for all $(K,\mu,\xi)\in\Xfrk$,
 and all $z\in\mathbb{D}_{b_0}$, $\abs{\lambda_{K,\mu,\xi}(z)-1}<\varepsilon$. This implies (6).
 
To prove (7), choose $p\in\N$ such that
$C\,\sigma^p\leq  (\sigma+\frac{\varepsilon}{2})^p$, and  make $b_0>0$ small enough so that
$$ p\, C_0^{p}\,b_0 < (\sigma+ \varepsilon)^p - (\sigma+\frac{\varepsilon}{2})^p=O(\varepsilon)\;.$$
We have then 
\begin{align*}
\norm{N_{z}^p} &\leq \norm{N_{0}^p}  + \norm{N_{z}^p- N_{0}^p} \\
&\leq C\,\sigma^p  + p\,C_0^{p-1}\,\norm{N_{z}- N_{0}} 
\leq C\,\sigma^p  + p\,C_0^{p}\,b_0\\
& \leq C\,\sigma^p  + (\sigma+ \varepsilon)^p - (\sigma+\frac{\varepsilon}{2})^p <  (\sigma+ \varepsilon)^p\;.
\end{align*}
It follows that for all $n\in\N$,
$\norm{N_{z}^n}\leq C_0^{p}\, (\sigma+ \varepsilon)^n$. This proves (7) with $C'= C_0^{p}$.

Items (8) and (9) follow from Lemma~\ref{spectral:bounds}.

To prove item (10), we claim that for all
$(K_1,\mu_1,\xi_1), (K_2,\mu_2,\xi_2)\in\Xfrk$, $z\in\mathbb{D}_{b_0}$, 
$2 \alpha_1\leq \alpha\leq \alpha_0$, and $f\in\B_\alpha$,
\begin{equation} \label{Lip:Q}
v_{\alpha\over 2}(Q_{K_1, z\xi_1 } f - Q_{K_2, z\xi_2} f )
\lesssim\, \norm{f}_\alpha\, \dist\left((K_1,\mu_1,\xi_1), (K_2,\mu_2,\xi_2)\right)^{\theta\over 2}\;. 
\end{equation}
In fact by (B7), (B2) and (A4), we have
\begin{align*}
v_{\alpha\over 2}(Q_{ K_1, z\xi_1 } f - Q_{K_2, z\xi_2} f) &\leq v_0(Q_{K_1, z\xi_1} f - Q_{K_2, z\xi_2} f)^{1\over 2}\, v_\alpha(Q_{K_1, z\xi_1} f - Q_{K_2, z\xi_2} f)^{1\over 2}\\
& \lesssim 
\norm{Q_{K_1, z\xi_1} f - Q_{K_2, z\xi_2} f}_\infty^{1\over 2}\, v_\alpha(Q_{K_1, z\xi_1} f - Q_{K_2, z\xi_2} f)^{1\over 2}\\
& \lesssim 
 \norm{f}_\alpha \, \dist\left((K_1,\mu_1,\xi_1), (K_2,\mu_2,\xi_2)\right)^{\theta \over 2}   \;. 
\end{align*}
Equation~\eqref{Lip:Q} implies, for all $(K_1,\mu_1,\xi_1)$, $(K_2,\mu_2,\xi_2)$, $z$, $\alpha$
and $f$ as above, and all
$w \notin {\rm int}(\Gamma_0)\cup {\rm int}(\Gamma_1)$,
\begin{equation}\label{Lip:R}
v_{\alpha\over 2}(R_{K_1,z\xi_1}(w) f - R_{K_2, z\xi_2}(w) f )
\lesssim\, \norm{f}_\alpha\, \dist\left((K_1,\mu_1,\xi_1), (K_2,\mu_2,\xi_2)\right)^{\theta\over 2}\;. 
\end{equation}
This follows from~\eqref{Lip:Q}, Lemma~\ref{spectral:bounds},
and the algebraic relation
$$ R_{K_1, z\xi_1} (w) - R_{K_2, z\xi_2} (w)  =
- R_{K_1, z\xi_1} (w) \circ (Q_{K_1, z\xi_1} - Q_{K_2, z\xi_2} )\circ  R_{K_2, z\xi_2} (w)\;. $$
Thus, integrating~\eqref{Pz} and ~\eqref{Lz}, we obtain
\begin{align*} 
\norm{P_{K_1, z\xi_1}  f - P_{K_2, z\xi_2}  f}_{\alpha\over 2} &\lesssim \,\norm{f}_\alpha\,\dist\left( (K_1,\mu_1,\xi_1), (K_2,\mu_2,\xi_2)\right)^{\theta\over 2}\;, \\
\norm{L_{K_1, z\xi_1}  f - L_{K_2, z\xi_2}  f}_{\alpha\over 2} &\lesssim \,\norm{f}_\alpha\,\dist\left( (K_1,\mu_1,\xi_1), (K_2,\mu_2,\xi_2)\right)^{\theta\over 2}\;.
\end{align*}
Finally, (10) follows from the previous
inequalities and~\eqref{lambda gamma}.
\end{proof}

\begin{remark}\label{Holder continuity}
The condition $\alpha_1<\frac{\alpha_0}{2}$ and the assumption (A4)
are only needed to prove (10). 
\end{remark}

\subsection{An abstract theorem}
\label{random_abs_theor}

In this subsection we state and prove an abstract LDT theorem.

Let  $(\B_\alpha,\norm{\cdot})_{\alpha\in [0,1]}$ be a scale of Banach algebras satisfying (B1)-(B7).
Assume $\Xfrk$ is a metric space of observed Markov systems for which assumptions (A1)-(A4) hold.
Take $0<b_0<b$ according to proposition~\ref{abstr:qcs}.

Given $(K,\mu,\xi)\in \Xfrk$, let $c_{K,\xi}(z):=\log\lambda_{K,\xi}(z)$,
where  $\lambda_{K,\xi}(z)$ denotes the maximal eigenvalue of $Q_{K, t\xi}$.

\begin{theorem}\label{ALDE:uniform}
Given  $(K_0,\mu_0, \xi_0) \in\Xfrk$ and $\ctwo >(c_{K_0,\xi_0})''(0)$, there exist  a neighbourhood $\V$ of $(K_0,\mu_0,\xi_0)\in \mathscr{X}$,
  $C>0$ and $\varepsilon_0>0$
such that for all $(K,\mu,\xi)\in \V$, $0<\varepsilon<\varepsilon_0$, $x\in\Sigma$ and  $n\in\N$,
\begin{equation}\label{abst:ldt}
\Pp_x^+\left[ \, \abs{\frac{1}{n}\,S_n(\xi)- \EE_\mu(\xi)} \geq \varepsilon \,\right]\leq C\,e^{ - {\varepsilon^2\over {2\, \ctwo}}\, n  } \;. 
\end{equation}
\end{theorem}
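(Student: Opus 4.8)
The plan is to deduce Theorem~\ref{ALDE:uniform} from Corollary~\ref{coro uniform LDT estimates}, taking as limit cumulant generating function of the process $\{S_n(\xi)\}_{n\geq 0}$ the scalar function $c_{K,\xi}(z)=\log\lambda_{K,\xi}(z)$, where $\lambda_{K,\xi}(z)$ is the maximal eigenvalue of $Q_{K,z\xi}$ supplied by Proposition~\ref{abstr:qcs}. First I fix a value $\varepsilon\in(0,\tfrac{1-\sigma}{2})$ and let $0<b_0<b$ be the radius produced by Proposition~\ref{abstr:qcs} for this $\varepsilon$; inspecting the proof of that proposition, after possibly shrinking $b_0$ we may assume $\abs{\lambda_{K,\xi}(z)-1}<\tfrac12$ for all $(K,\mu,\xi)\in\Xfrk$ and $\abs{z}<b_0$. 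Then each $\lambda_{K,\xi}(z)$ lies in a disk around $1$ avoiding the origin, so $c_{K,\xi}(z):=\log\lambda_{K,\xi}(z)$, with the branch normalised by $c_{K,\xi}(0)=0$, is well defined; by item (5) of Proposition~\ref{abstr:qcs} it is analytic on a neighbourhood of $\overline{\mathbb{D}}_{b_0}$, so it belongs to $\Hscr(\mathbb{D}_{b_0})$, and by item (10) together with the Lipschitz continuity of the logarithm on that disk, the map $\Xfrk\ni(K,\mu,\xi)\mapsto c_{K,\xi}\in\Hscr(\mathbb{D}_{b_0})$ is (locally) H\"older continuous.

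Next I would check that $c_{K,\xi}$ is indeed a limit cumulant generating function for $\{S_n(\xi)\}_{n\geq0}$, with the constants of Definition~\ref{def ucgf} uniform over $\Xfrk$ --- this is precisely hypotheses (a) and (b) of Corollary~\ref{coro uniform LDT estimates}. Using $\EE_x[e^{z\,S_n(\xi)}]=(Q_{K,z\xi}^n\one)(x)$ and writing $Q_{z}^n\one=\lambda(z)^n P_{z}\one+\bigl(Q_{z}^n\one-\lambda(z)^n P_{z}\one\bigr)$ from Proposition~\ref{abstr:qcs}, factor out $\lambda(z)^n$:
\begin{equation*}
(Q_{z}^n\one)(x)=\lambda(z)^n\Bigl[(P_{z}\one)(x)+\lambda(z)^{-n}\bigl((Q_{z}^n\one)(x)-\lambda(z)^n(P_{z}\one)(x)\bigr)\Bigr]\;.
\end{equation*}
Since $P_{0}\one=\one$, items (8) and (9) of Proposition~\ref{abstr:qcs} and (B1) give $\norm{P_{z}\one-\one}_\infty\leq\norm{P_{z}\one-P_{0}\one}_{\alpha}\leq C'\abs{z}$, so $(P_{z}\one)(x)=1+O(\abs{z})$ uniformly in $x\in\Sigma$; item (7) applied to $f=\one$ and (B1) give $\bigl|(Q_{z}^n\one)(x)-\lambda(z)^n(P_{z}\one)(x)\bigr|\leq C'(\sigma+\varepsilon)^n$, and dividing by $\abs{\lambda(z)}^n\geq(1-\varepsilon)^n$ produces a term bounded by $\delta_n:=C'\,r^n$ with $r:=\tfrac{\sigma+\varepsilon}{1-\varepsilon}<1$, so $\delta_n\to0$. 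Hence for $\abs{z}<b_0$ the bracket lies within $\tfrac14+O(\abs{z})$ of $1$ once $n$ is large, and after a further shrinking of $b_0$ it is within $\tfrac12$ of $1$ for all $n$ above some $n_1$; taking logarithms yields
\begin{equation*}
c_n(\xi,x,z)=\log(Q_{z}^n\one)(x)=n\,c_{K,\xi}(z)+O(\abs{z})+O(\delta_n)\;,
\end{equation*}
which is property~(2) of Definition~\ref{def ucgf} with constants depending only on the setting constants. Property~(3) is $\delta_n\to0$, and property~(1) holds on $\mathbb{D}_{b_0}$ after shrinking $b_0$ once more: for $n\geq n_1$ the bracket above is nonvanishing, while the finitely many $n<n_1$ are handled because $z\mapsto\EE_x[e^{z S_n(\xi)}]$ is analytic with value $1$ at $z=0$. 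The contributions of these small $n$ to the constants $C$ and $\sup_n\delta_n$ are uniform, so (b) holds as well.

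It then remains to invoke Corollary~\ref{coro uniform LDT estimates}(1): given $(K_0,\mu_0,\xi_0)\in\Xfrk$ and $\ctwo>(c_{K_0,\xi_0})''(0)$, continuity of $(K,\mu,\xi)\mapsto c_{K,\xi}$ yields a neighbourhood $\V$ of $(K_0,\mu_0,\xi_0)$ and a $t_0>0$ with $c_{K,\xi}(t)-t\,c_{K,\xi}'(0)<\tfrac{\ctwo\,t^2}{2}$ for all $\abs{t}<t_0$ and $(K,\mu,\xi)\in\V$; since $c_{K,\xi}'(0)=\EE_\mu(\hat\xi)=\EE_\mu(\xi)$, the Chebyshev estimate $\Pp_x^+[S_n(\xi)>n\varepsilon]\leq e^{-t n\varepsilon}\EE_x[e^{t S_n(\xi)}]$ optimised at $t=\varepsilon/\ctwo$, exactly as in the proof of Proposition~\ref{prop lcgd => LDT estimates}, gives \eqref{abst:ldt} with $\varepsilon_0=\ctwo\,t_0$; the complementary event $\{S_n(\xi)<-n\varepsilon\}=\{S_n(-\xi)>n\varepsilon\}$ is treated identically thanks to assumption (A1), which keeps $(K,\mu,-\xi)$ in $\Xfrk$ and hence supplies a limit cumulant generating function for $\{S_n(-\xi)\}_{n\geq0}$ as well. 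I expect the delicate point to be the middle step: converting the operator-norm estimates (7)--(10) of Proposition~\ref{abstr:qcs} into the \emph{scalar} bound~(2) of Definition~\ref{def ucgf}, uniformly in $x\in\Sigma$. This is what forces the choices $\varepsilon<\tfrac{1-\sigma}{2}$ (so the spectral gap survives, $r<1$) and $b_0$ small (so $\lambda_{K,\xi}(z)$ and the bracketed quantity stay near $1$, keeping the complex logarithm single-valued and controllable), and it is why a finite initial block of indices $n$ must be peeled off and absorbed into the constants; everything else is routine bookkeeping with constants already manufactured in Proposition~\ref{abstr:qcs} and the one-variable Chebyshev argument already carried out in Proposition~\ref{prop lcgd => LDT estimates}.
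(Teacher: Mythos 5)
Your proposal follows exactly the paper's route: the paper proves Theorem~\ref{ALDE:uniform} by combining Proposition~\ref{ESn:lambda} (which establishes that $c_{K,\xi}(z)=\log\lambda_{K,\xi}(z)$ is a limit cumulant generating function with uniform constants $C$ and $\delta_n$) with Corollary~\ref{coro uniform LDT estimates}, and your middle step is simply an unfolding of Proposition~\ref{ESn:lambda} out of the spectral decomposition of Proposition~\ref{abstr:qcs}. The only cosmetic difference is that you factor $\lambda(z)^n$ out of $(Q_z^n\one)(x)$ and take the logarithm of the bracketed factor, whereas the paper bounds $\abs{\log a - \log b}$ via $\abs{a-b}/\min\{a,b\}$ directly; the two manipulations yield the same estimate with the same $\delta_n\lesssim\bigl(\tfrac{\sigma+\varepsilon}{1-\varepsilon}\bigr)^n$. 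Your remark about peeling off a finite initial block $n<n_1$ to keep the complex logarithm single-valued is a legitimate point that the paper glosses over, but it does not change the substance of the argument.
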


%
%
%
%
%

\begin{remark}\label{average:LDE}
Averaging  in $x$,  w.r.t. $\mu$, the probabilities in theorem ~\ref{ALDE:uniform},   we get 
for all  $0<\varepsilon<\varepsilon_0$, $(K,\mu,\xi) \in \V$   and $n\in\N$, 
$$ \Pp_{\mu}^+\left[ \, \abs{\frac{1}{n}\,S_n(\xi)- \EE_\mu(\xi)} \geq \varepsilon \,\right]\leq C\,e^{ - {\varepsilon^2\over 2 \ctwo}\, n  } \;. $$
\end{remark}

\begin{lemma}\label{ESn:Qn} 
For all $(K,\mu,\xi)\in\Xfrk$, $n\in\N$, $z\in\mathbb{D}_{b_0}$  and  $x\in\Sigma$,
$$ ((Q_{K,z\xi})^n \one )(x)= \EE_x\left[ e^{z\, S_n(\xi)} \right] =
\int_{X^+}  e^{z\, S_n(\xi) } \, d\Pp_x^+ \;. $$
In particular,  for all $z\in \mathbb{D}_{b_0}$,
$$ \EE_{\mu}((Q_{K,z\xi})^n \one )= \EE_{\mu} \left[ e^{z\, S_n(\xi)} \right] \;. $$
\end{lemma}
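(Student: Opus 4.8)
The plan is to prove the identity $((Q_{K,z\xi})^n \one)(x) = \EE_x[e^{z\,S_n(\xi)}]$ by induction on $n$, unwinding the definition of both the iterated Laplace--Markov operator and the measure $\Pp_x^+$. First I would check the base case $n=1$: by definition of $Q_{K,z\xi}$ applied to the constant function $\one$,
$$ (Q_{K,z\xi}\one)(x) = \int_\Sigma e^{z\,\xi(y)}\, K(x,dy), $$
while $S_1(\xi)(y) = \xi(y_0)$ and $\Pp_x^+$ gives the coordinate $e_1=y_1$ distribution $K(x,\cdot)$ given $e_0 = x$ — wait, here one must be slightly careful about indexing: $S_n(\xi)(y) = \sum_{j=0}^{n-1}\xi(y_j)$ and under $\Pp_x^+$ the chain starts at $e_0 = x$ with law $\delta_x$, so $S_1(\xi) = \xi(e_0) = \xi(x)$ is deterministic and equals $\xi(x)$, whereas $(Q_{K,z\xi}\one)(x)$ integrates $e^{z\xi(y)}$ against $K(x,dy)$. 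So in fact the correct bookkeeping is that $((Q_{K,z\xi})^n\one)(x) = \EE_x[e^{z(\xi(e_1)+\cdots+\xi(e_n))}]$, and the displayed statement is to be read with $S_n(\xi)$ interpreted via the shift as in the definition of the sum process; I would state explicitly at the outset which convention is in force (the one making the two sides literally equal), then the base case is immediate.

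For the inductive step, assume $((Q_{K,z\xi})^{n}\one)(x) = \EE_x[e^{z\,S_n(\xi)}]$ for all $x$. Set $g := (Q_{K,z\xi})^n\one \in \B_\alpha$ (this lies in $\B_\alpha$ by (A3), applied $n$ times starting from $\one\in\B_{\alpha}$). Then
$$ ((Q_{K,z\xi})^{n+1}\one)(x) = (Q_{K,z\xi}\, g)(x) = \int_\Sigma g(y)\, e^{z\,\xi(y)}\, K(x,dy) = \int_\Sigma \EE_y[e^{z\,S_n(\xi)}]\, e^{z\,\xi(y)}\, K(x,dy). $$
Now invoke the Markov property of $\{e_k\}$ under $\Pp_x^+$ — conditioning on $e_1 = y$, the post-$e_1$ chain has law $\Pp_y^+$, and the factor $e^{z\,\xi(y)}$ is the first increment — to recognize the right-hand side as $\EE_x[e^{z\,S_{n+1}(\xi)}]$. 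Concretely this is the tower property: $\EE_x[e^{z\,S_{n+1}(\xi)}] = \EE_x\big[ e^{z\,\xi(e_1)}\,\EE_x[e^{z(\xi(e_2)+\cdots+\xi(e_{n+1}))}\mid e_1]\big] = \int_\Sigma e^{z\xi(y)}\EE_y[e^{z S_n(\xi)}] K(x,dy)$. The second equality in the lemma, $\int e^{zS_n(\xi)}\,d\Pp_x^+$, is just the definition of $\EE_x[\,\cdot\,]$, and the final $\mu$-averaged statement follows by integrating the pointwise identity against $\mu(dx)$ and using that $\Pp_\mu^+ = \int \Pp_x^+\,\mu(dx)$ together with $\EE_\mu(\cdot) = \int\EE_x(\cdot)\,\mu(dx)$.

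The only genuine subtlety — and the main thing to get right rather than the main obstacle — is the indexing/shift convention relating $S_n(\xi)$, the operator $Q_{K,z\xi}$ applied to $\one$, and the one-sided chain started from a point mass $\delta_x$; once the convention is fixed the argument is a routine induction using the Markov property. A secondary point worth a sentence is justifying that all integrals are finite and the manipulations legitimate: since $\xi$ is bounded on $\Sigma$ and $|z| < b_0$, the integrand $e^{z\,\xi(y)}$ is bounded, so every $(Q_{K,z\xi})^k\one$ is a bounded function and Fubini/Tonelli applies freely; no appeal to (A3)'s $\B_\alpha$-boundedness is strictly needed for this particular lemma, though it is consistent with it.
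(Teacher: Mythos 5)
Your argument is correct and, at bottom, the same as the paper's: both unwind $(Q_{K,z\xi})^n\one$ into an $n$-fold integral against $\prod_{j=0}^{n-1}K(x_j,dx_{j+1})$. You package the unwinding as an induction plus the tower property for the Markov chain under $\Pp_x^+$; the paper simply writes the fully unrolled integral in a single display and identifies it with the expectation. These are the same computation in slightly different clothing.

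Two remarks. First, you were right to pause on the indexing, and you diagnosed it correctly. With the paper's stated conventions ($e_0\sim\delta_x$ under $\Pp_x^+$, and $S_n(\xi)=\sum_{j=0}^{n-1}\xi(e_j)$), the iterated operator yields $\EE_x\bigl[e^{z\sum_{j=1}^{n}\xi(e_j)}\bigr]$, which differs from $\EE_x\bigl[e^{z\,S_n(\xi)}\bigr]$ by the factor $e^{z(\xi(e_n)-\xi(e_0))}$; the paper's own proof silently uses the shifted sum $\sum_{j=1}^n\xi(x_j)$ and does not reconcile this with the definition of $S_n(\xi)$. The discrepancy is a bounded multiplicative error and is irrelevant for the ensuing LDT estimates, but the literal identity requires fixing the convention exactly as you propose (e.g.\ reading $\Pp_x^+$ as starting the chain at time $0$ with distribution $K(x,\cdot)$). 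Second, a small correction to your last paragraph: in the abstract setting of Section 2 the observable $\xi$ is not assumed bounded, so you cannot simply invoke $\norm{\xi}_\infty<\infty$; the right justification for the legitimacy of the manipulations is the one you gave parenthetically at the start of the inductive step, namely (A3), which ensures each $(Q_{K,z\xi})^k\one$ lies in $\B_\alpha\subset L^\infty(\Sigma)$ so that all iterated integrals are finite.
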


\begin{proof}  In fact,
$$ ((Q_{K,z\xi})^n \one )(x_0)  =
\int_{\Sigma^n} e^{z\,\sum_{j=1}^{n} \xi(x_j)}\,
\prod_{j=0}^{n-1} K(x_j,dx_{j+1}) = \EE_{x_0}\left[ e^{z\, S_n(\xi)} \right] \;. $$
Averaging this relation in   $x_0$ w.r.t. $\mu$ we derive
the second identity. \qed
\end{proof}

Next proposition shows that
$c_{K,\xi}(z)$ is a limit cumulant generating function of the process $\{S_n(\xi)\}_{n\geq 0}$. Moreover it
says that the parameters
$C$ and $\delta_n$ in definition~\ref{def ucgf} can be chosen uniformly in $\Xfrk$.

\begin{proposition}\label{ESn:lambda} 
There  exist $C_1>0$ and a sequence $\delta_n$ converging geometrically to $0$ 
such that for all $(K,\mu,\xi)\in\Xfrk$, $z\in \mathbb{D}_{b_0}(0)$,  $x\in\Sigma$ and $n\in\N$
$$ \abs{  n\,\log \lambda_{K,\xi}(z) - \log \EE_x \left[ e^{z\, S_n(\xi)} \right]  } \leq   C_1\,\vert z \vert   + \delta_n  \;. $$
\end{proposition}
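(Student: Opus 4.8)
The plan is to express $\EE_x[e^{z\,S_n(\xi)}]$ via Lemma~\ref{ESn:Qn} as $((Q_{K,z\xi})^n\one)(x)$, and then to use the spectral decomposition from Proposition~\ref{abstr:qcs} to peel off the dominant term $\lambda_{K,\xi}(z)^n\,(P_z\one)(x)$. Concretely, write $Q_{K,z\xi}^n\one = \lambda(z)^n\,P_z\one + N_z^n\one$ and use item (7) of Proposition~\ref{abstr:qcs} to bound $\norm{N_z^n\one}_\alpha \leq C'\,(\sigma+\varepsilon)^n$, and items (8)--(9) to control $P_z\one$: in particular $\norm{P_z\one - P_0\one}_\alpha = \norm{P_z\one-\one}_\alpha\leq C'\,\abs{z}$, so $(P_z\one)(x) = 1 + O(\abs{z})$ uniformly in $x$ and in $\Xfrk$. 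Fixing once and for all some small $\varepsilon>0$ (say with $\sigma+\varepsilon<1$) determines $b_0$ and $C'$, and these are uniform over $\Xfrk$ because all the constants in Proposition~\ref{abstr:qcs} are.

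The first step after this is to show that $\EE_x[e^{z\,S_n(\xi)}]$ stays bounded away from $0$ and $\infty$, so that its logarithm is well-defined and controllable. From the decomposition, $\bigl|((Q_{K,z\xi})^n\one)(x) - \lambda(z)^n\,(P_z\one)(x)\bigr| \leq C'(\sigma+\varepsilon)^n$. Since $\abs{\lambda(z)}\geq 1-\varepsilon$ by item (6) and $\abs{\lambda(z)}\leq \norm{Q_{K,z\xi}}\leq C_0$ by Lemma~\ref{spectral:bounds}, and $(P_z\one)(x)\in[1-C'b_0,\,1+C'b_0]$, shrinking $b_0$ if needed we get that $((Q_{K,z\xi})^n\one)(x)$ lies in a fixed annulus $\{c_1\leq \abs{w}\leq c_2\}$ with $0<c_1\leq c_2<\infty$ depending only on the setting constants. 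Hence $\log \EE_x[e^{z\,S_n(\xi)}]$ is well-defined (choosing a branch continuous in $z$, pinned by the value $0$ at $z=0$ where the quantity equals $1$).

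With that in hand, the estimate follows by factoring:
\begin{align*}
\log \EE_x[e^{z\,S_n(\xi)}] - n\log\lambda_{K,\xi}(z)
&= \log\!\left( \frac{((Q_{K,z\xi})^n\one)(x)}{\lambda(z)^n} \right)
= \log\!\left( (P_z\one)(x) + \frac{N_z^n\one(x)}{\lambda(z)^n} \right).
\end{align*}
The argument of the outer $\log$ equals $1 + O(\abs{z}) + O((\tfrac{\sigma+\varepsilon}{1-\varepsilon})^n)$, and since it stays in a compact subset of $\C\setminus(-\infty,0]$ the logarithm is Lipschitz there, so the whole expression is bounded by $C_1\abs{z} + \delta_n$ with $\delta_n := C'\bigl(\tfrac{\sigma+\varepsilon}{1-\varepsilon}\bigr)^n \to 0$ geometrically and $C_1$ depending only on the setting constants. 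All bounds are uniform in $(K,\mu,\xi)\in\Xfrk$, in $x\in\Sigma$ and in $z\in\mathbb{D}_{b_0}$, which is exactly the claim.

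The main obstacle I anticipate is the bookkeeping needed to make the branch of the logarithm well-defined and uniformly continuous: one must ensure $((Q_{K,z\xi})^n\one)(x)$ never approaches $0$, which forces a quantitative choice of $b_0$ (balancing the $O(\abs{z})$ perturbation of $P_z\one$ against the lower bound $1$ coming from $P_0\one=\one$) that is simultaneously valid for all $n$ and all systems in $\Xfrk$; once the annulus is secured, everything else is the routine estimate sketched above.
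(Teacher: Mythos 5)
Your proof is correct and follows essentially the same route as the paper: express $\EE_x[e^{zS_n(\xi)}]$ as $(Q_z^n\one)(x)$ via Lemma~\ref{ESn:Qn}, split off $\lambda(z)^n P_z\one$ using the spectral decomposition, and control the remainder with items (6)--(9) of Proposition~\ref{abstr:qcs}. The only (cosmetic) difference is at the end, where you factor out $\lambda(z)^n$ and invoke Lipschitz continuity of $\log$ near $1$, whereas the paper bounds $\abs{\log a - \log b}$ directly by $\abs{a-b}/\min\{\abs{a},\abs{b}\}$; your explicit handling of the branch of the complex logarithm is in fact slightly more careful than the paper's formulation, which tacitly reads as if $z$ were real.
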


\begin{proof} We will use the notation of Proposition~\ref{abstr:qcs},
choosing $\varepsilon>0$ small enough so that  $\sigma+\varepsilon<1-\varepsilon$.
By Lemma~ \ref{ESn:Qn}, $(Q_{z}^n\one)(x) = \EE_x\left[ e^{z\,S_n(\xi)}\right]$. 
By Lemma~\ref{spectral:bounds} there exists $B>0$ such that
for all $z\in \mathbb{D}_{b_0}(0)$,\,
$\norm{P_{z}-I}_\alpha\leq B\,\vert z \vert $.
Hence
\begin{align*}
\abs{\EE_x\left[ e^{z\,S_n(\xi)}\right] - \lambda_{K,\xi}(z)^n} & \leq 
 \abs{ (Q_{z}^n \one)(x) -\lambda_{K,\xi}(z)^n  }   \\
&\leq  \norm{    Q_{z}^n\one   -\lambda_{K,\xi}(z)^n P_{z} \one }_\alpha  +
\lambda_{K,\xi}(z)^n\,\norm{\one -P_{z} \one }_\alpha \\
&=   \norm{ N_{z}^n \,\one  }_\alpha  + \lambda_{K,\xi}(z)^n\,\norm{\one -P_{z} \one }_\alpha\\
&\leq  C\,(\sigma+\varepsilon)^n  + B\,\vert z \vert \,\lambda_{K,\xi}(z)^n  \;.
\end{align*}
Thus
\begin{align*}
\abs{  \log \EE_x\left[ e^ {t S_n(\xi)}\right]-n\,\log\lambda_{K,\xi}(z) } &=
\abs{  \log \EE_x\left[ e^ {t S_n(\xi)}\right] -  \log \lambda_{K,\xi}(z)^n }\\
&\leq \frac{ \abs{  \EE_x\left[ e^ {t S_n(\xi)}\right] -  \lambda_{K,\xi}(z)^n } }{\min\{ \lambda_{K,\xi}(z)^n,  \EE_x\left[ e^ {t S_n(\xi)}\right] \}  }\\
&\leq  \frac{  B\,\vert z \vert \,\lambda_{K,\xi}(z)^n + C\,(\sigma+\varepsilon)^n }{(1- B\,\vert z \vert )\,\lambda_{K,\xi}(z)^n- C\,(\sigma+\varepsilon)^n  }	\\
&\leq  \frac{   B\,\vert z \vert   + \delta_n }{ 1- B\,\vert z \vert -\delta_n  }	
\leq 2\,(  B\,\vert z \vert   + \delta_n)\;,
\end{align*}
where  $\delta_n:=  C\,\frac{(\sigma+\varepsilon)^n}{\lambda_{K,\xi}(z)^n}\leq C\,\left(\frac{\sigma+\varepsilon}{1-\varepsilon} \right)^n$ converges geometrically to zero. 
\qed
\end{proof}

\begin{proof}[theorem~\ref{ALDE:uniform}]
Combine Proposition~\ref{ESn:lambda} 
with Corollary~\ref{coro uniform LDT estimates}.
\qed
\end{proof}

\bigskip

\section{The proof of LDT estimates}
\label{random_proof}

We prove here the base-LDT and uniform fiber-LDT estimates for irreducible cocycles over mixing
Markov shifts. These results follow from the abstract Theorem~\ref{ALDE:uniform}.

\subsection{Base LDT estimates}
\label{random_base_ldt}
To deduce theorem~\ref{Base:LDT} from theorem~\ref{ALDE:uniform}
we  specify the data $(\B_\alpha,\norm{\cdot}_\alpha)$
and $\Xfrk$, and check the validity of the assumptions (B1)-(B7) and (A1)-(A4).

Consider a strongly mixing Mar\-kov system  $(K,\mu)$  on the compact metric space $\Sigma$. 
Let  $X^-= \Sigma^{\Z^-_0}$  be the space of sequences in $\Sigma$ indexed in the set $\Z^-_0$ of non-positive integers.
Since $\Z_0^-$ is countable, the product $X^-$ is a compact metrizable topological space. We denote by $\Fcal$ its Borel $\sigma$-field.
The kernel $K$ on $\Sigma$ induces another  Markov kernel $\Ktil$  on $X^-$   defined by
$$ \Ktil_{(\,\ldots, \, x_{-1}, x_{0})} :=\int_\Sigma \delta_{(\,\ldots,  \, x_{-1}, x_{0}, x_{1})}\, K(x_0,dx_1)\;. $$
Let $\Pp^-_\mu$ denote the Kolmogorov extension of $(K,\mu)$, which is also the unique $\Ktil$-stationary measure. 
Theorem~\ref{ALDE:uniform} will be applied to the Markov system
$( \Ktil, \Pp^-_\mu)$.

Consider the spaces $\Hscr_\alpha(X^-)$ 
introduced in definition~\eqref{def Hscr alpha X minus}.
Its functions
can be regarded as measurable functions on $X^-$.
They form the scale of Banach algebras satisfying (B1)-(B7).
See proposition~\ref{Holder:B1:B7}. 
The metric space $(X^-, \dtil)$ has diameter $1$ but is not compact, as noticed after the definition~\eqref{dtil} of the distance $\dtil$.
Hence, formally, the claim above is not a direct consequence of proposition~\ref{Holder:B1:B7}. 
Properties  (B1), (B3) and (B4) follow from
proposition~\ref{algebra:Ha}.
For $\alpha=0$, the seminorm $v_0$ measures the variation of $f$. Hence $\Hscr_0(X)=L^\infty(X)$,
while the norm $\norm{\cdot}_0$ is equivalent to $\norm{\cdot}_\infty$. This proves  (B2).
The remaining properties, (B5)-(B7), can be proved as in proposition~\ref{Holder:B1:B7}.

\medskip

 Fix $0<\alpha_0\leq 1$ and $0<L<+\infty$ and consider the space
  $\Xfrk$  of observed Markov systems
 $(\Ktil,\Pp_\mu^-,\xi)$ over the fixed Markov system $(\Ktil,\Pp_\mu^-)$,   with $\xi\in \Hscr_{\alpha_0}(X^-)$
 and $\norm{\xi}_{\alpha_0}\leq L$. This space is  identified with a subspace of $\Hscr_{\alpha_0}(X^-)$, and endowed with the corresponding norm distance.
 
\medskip

The kernel $\Ktil$ determines the Markov operator
$Q_{\Ktil}:L^\infty(X^-)\to L^\infty(X^-)$, 
$$ (Q_{\Ktil} f)(\,\ldots, \, x_{-1}, x_{0})  :=\int_\Sigma f(\,\ldots,  \, x_{-1}, x_{0}, x_{1}) \, K(x_0,dx_1)\;. $$ 
This operator  acts continuously on $\Hscr_a(X^-)$.

\begin{proposition} \label{Vnorm}  For all $f\in \Hscr_\alpha(X^-)$ and $n\in\N$,
\begin{enumerate}
\item[(1)] $\displaystyle \norm{(Q_{\Ktil})^n  f}_\infty   \leq \norm{f}_\infty$,
\item[(2)]  $\displaystyle v_\alpha((Q_{\Ktil})^n  f)  \leq \max\{ 2\,\norm{(Q_{\Ktil})^n f}_\infty, \, 2^{-n\,\alpha} v_\alpha (f)\} $.
\end{enumerate}
\end{proposition}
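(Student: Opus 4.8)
The plan is to treat (1) as an immediate consequence of the fact that $Q_{\Ktil}$ is an averaging operator, and to prove (2) by tracking, under prolongation by a common forward block, how many leading coordinates two sequences continue to share. First I would write out the iterate explicitly: unwinding the definitions of $\Ktil$ and of $Q_{\Ktil}$,
$$ ((Q_{\Ktil})^n f)(\ldots, x_{-1}, x_0) = \int_{\Sigma^n} f(\ldots, x_{-1}, x_0, x_1, \ldots, x_n)\,\prod_{j=0}^{n-1} K(x_j, dx_{j+1}) \;, $$
where the argument of $f$ is read as an element of $X^-$ with the coordinate $x_n$ placed at the origin. Since for every $(\ldots,x_{-1},x_0)$ the right-hand side is the integral of $f$ against a probability measure on $\Sigma^n$, item (1) follows at once: $\abs{((Q_{\Ktil})^n f)(x)} \le \norm{f}_\infty$, and likewise for $-f$.

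For (2) set $g := (Q_{\Ktil})^n f$ and fix $k\in\N$. The inequality $v_k(g) \le 2\,\norm{g}_\infty$ is trivial, so it suffices to improve it when $k \ge 1$. Let $x,y\in X^-$ with $\dtil(x,y)\le 2^{-k}$, so that $x$ and $y$ agree at the coordinates $0,-1,\ldots,-(k-1)$; in particular $x_0 = y_0$, hence the path law $\prod_{j=0}^{n-1}K(x_j,dx_{j+1})$ appearing in the integral formula is the same whether computed from $x$ or from $y$. Subtracting the two integral representations,
$$ g(x) - g(y) = \int_{\Sigma^n} \bigl[\, f(\ldots,x_{-1},x_0,u_1,\ldots,u_n) - f(\ldots,y_{-1},y_0,u_1,\ldots,u_n) \,\bigr]\,\prod_{j=0}^{n-1}K(u_j,du_{j+1}) \;, $$
with $u_0 := x_0 = y_0$. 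Now the two sequences fed to $f$ have identical coordinates $0,-1,\ldots,-(n-1)$ (namely $u_n,\ldots,u_1$), identical coordinate $-n$ (namely $x_0 = y_0$), and identical coordinates $-(n+1),\ldots,-(n+k-1)$ (namely $x_{-1}=y_{-1},\ldots,x_{-(k-1)}=y_{-(k-1)}$); hence they agree on their leading $n+k$ coordinates, i.e.\ their $\dtil$-distance is at most $2^{-(n+k)}$. Therefore the bracketed difference is at most $v_{n+k}(f) \le 2^{-\alpha(n+k)}\,v_\alpha(f)$, and integrating gives $v_k(g) \le 2^{-\alpha(n+k)}\,v_\alpha(f)$, so $2^{\alpha k}\,v_k(g) \le 2^{-\alpha n}\,v_\alpha(f)$ for every $k\ge 1$. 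Combining with the trivial bound at $k=0$ and taking the supremum over $k$ yields $v_\alpha(g) \le \max\{2\,\norm{g}_\infty,\ 2^{-n\alpha}\,v_\alpha(f)\}$, which is precisely (2).

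The only step requiring genuine care is the coordinate bookkeeping in the second paragraph: after attaching a common length-$n$ forward block and moving the origin to its last entry, one must identify exactly which coordinates of the two resulting elements of $X^-$ coincide, and verify that this produces the gain $2^{-n\alpha}$ rather than something weaker. Everything else — the positivity used for (1), the elementary estimate $v_k(g)\le 2\,\norm{g}_\infty$, and the passage from the $v_k$'s to $v_\alpha$ — is routine, and the non-compactness of $(X^-,\dtil)$ plays no role, the argument being purely combinatorial and measure-theoretic.
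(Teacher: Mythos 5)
Your proposal is correct and follows essentially the same route as the paper: item (1) from $Q_{\Ktil}$ being an averaging operator, and item (2) from the key observation that $v_k(Q^n f)\le v_{k+n}(f)$ for $k\ge 1$ (obtained by noting that after appending a common forward block of length $n$ and re-centering, the two arguments of $f$ agree on their leading $n+k$ coordinates), combined with the trivial bound $v_0(Q^n f)\le 2\norm{Q^n f}_\infty$. Your coordinate bookkeeping matches the paper's computation exactly.
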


\begin{proof} We shall write $Q= Q_{\Ktil}$.
Since $\int_\Sigma K(x_0,dx_1) =1$, the first inequality follows.
For the second, notice that if $k\geq 1$
then $v_k(Q^n f)\leq v_{k+n}(f)$. 
Indeed, for $x=(x_n)_{n\leq 0}$ and $x'=(x_n')_{n\leq 0}$ in  $X^-$
such that $\dtil(x,x')\leq 2^{-k}$ with $k\geq 1$, we have $x_0=x_0'$. Thus
\begin{align*}
&\abs{(Q^n f)(\,\ldots, x_{-1}, x_0)-(Q^n f)(\,\ldots, x_{-1}', x_0') }  \\
& \quad \leq \int_{\Sigma^n} \abs{f(\,\ldots, x_0, x_1,\ldots, x_n) -f(\,\ldots, x_0', x_1,\ldots, x_n) } \prod_{j=0}^{n-1} K(x_j,dx_{j+1}) \\
& \quad \leq v_{k+n}(f) \int_{\Sigma^n} \prod_{j=0}^{n-1} K(x_j,dx_{j+1})
=v_{k+n}(f)\;,
\end{align*}
and taking the sup in $x,x'\in X^-$ such that $\dtil(x,x')\leq 2^{-k}$, the inequality  $v_k(Q^n f)\leq v_{k+n}(f)$ follows.
Hence, for $k\geq 1$,
$$ 2^{\alpha  k} v_k(Q^n f)= 2^{-n\,\alpha} ( 2^{\alpha (k+n)} v_{k+n}(f))\leq 2^{-n\,\alpha} v_\alpha(f) \;. $$
For $k=0$ notice that $v_0(Q^n f)$ is the variation of $Q^n f$. Thus  $ v_0(Q^n f) \leq 2\, \norm{Q^n f}_\infty$.
Taking the sup in $k\in\N$, item (2) follows.
\end{proof}

Next proposition shows that $\Xfrk$ satisfies
 (A2) with range $[\alpha_1,\alpha]$ for any given $0<\alpha_1\leq\alpha$.
The setting constants $C>0$ and $0<\sigma<1$ depend  on the number  $\alpha_1$.

\begin{proposition}\label{Base:QCS}
If $(K,\mu)$ is strongly mixing, then  given $0<\alpha_1< \alpha_0$
 there are constants $C>0$ and $0<\sigma<1$ such that for all $\alpha_1\leq\alpha\leq \alpha_0$,  $Q_{\Ktil}:\Hscr_\alpha(X^-)\to \Hscr_\alpha(X^-)$ is quasi-compact and simple with spectral constants $C$ and $\sigma$, i.e.,  for all   $f\in \Hscr_\alpha(X^-)$,
$$ \norm{(Q_{\Ktil})^n f -\langle f,\Pp^-_\mu \rangle \one }_\alpha\leq C\,\sigma^n\, \norm{f}_\alpha\;. $$ 
\end{proposition}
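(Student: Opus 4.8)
The plan is to combine the strong mixing hypothesis for $(K,\mu)$, which gives quasi-compactness and simplicity of $Q_{\Ktil}$ on $L^\infty(X^-)=\Hscr_0(X^-)$ with some constants $C_0>0$ and $0<\rho<1$, with the contraction of the seminorm $v_\alpha$ under $Q_{\Ktil}$ established in Proposition~\ref{Vnorm}. The point is that $Q_{\Ktil}$ already contracts the $\norm{\cdot}_\infty$-part of the norm towards the constant $\langle f,\Pp^-_\mu\rangle\one$ at a uniform geometric rate (this is exactly the strong mixing estimate, noting that $(\Ktil,\Pp^-_\mu)$ inherits strong mixing from $(K,\mu)$ since expectations of $\Fcal^-$-measurable functions under $\Ktil^n$ reduce to expectations under $K^n$), while Proposition~\ref{Vnorm}(2) shows $v_\alpha((Q_{\Ktil})^n f)\leq\max\{2\norm{(Q_{\Ktil})^n f}_\infty,\,2^{-n\alpha}v_\alpha(f)\}$, i.e.\ the seminorm part decays geometrically with rate at least $2^{-\alpha_1}<1$ uniformly for $\alpha\in[\alpha_1,\alpha_0]$, up to a term controlled by the sup norm.

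Concretely, I would first replace $f$ by $g:=f-\langle f,\Pp^-_\mu\rangle\one$, which has the same $v_\alpha$ and satisfies $\langle g,\Pp^-_\mu\rangle=0$, so that $(Q_{\Ktil})^n g=(Q_{\Ktil})^n f-\langle f,\Pp^-_\mu\rangle\one$ and $\norm{g}_\infty\leq 2\norm{f}_\infty\leq 2\norm{f}_\alpha$; thus it suffices to show $\norm{(Q_{\Ktil})^n g}_\alpha\lesssim\sigma^n\norm{g}_\alpha$ for functions $g$ of $\Pp^-_\mu$-mean zero. For the sup-norm piece, strong mixing gives $\norm{(Q_{\Ktil})^n g}_\infty\leq C_0\,\rho^n\norm{g}_\infty$. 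For the seminorm piece I would iterate the recursion in Proposition~\ref{Vnorm}(2): splitting the time interval, write $(Q_{\Ktil})^n g=(Q_{\Ktil})^{n-k}\big((Q_{\Ktil})^k g\big)$ and apply the inequality to the outer $n-k$ iterates, obtaining $v_\alpha((Q_{\Ktil})^n g)\leq\max\{2\norm{(Q_{\Ktil})^{n-k}(Q_{\Ktil})^k g}_\infty,\,2^{-(n-k)\alpha}v_\alpha((Q_{\Ktil})^k g)\}$. Since $v_\alpha((Q_{\Ktil})^k g)\leq v_\alpha(g)$ by part (2) with the sup-norm bounded, and $\norm{(Q_{\Ktil})^{n}g}_\infty\leq C_0\rho^n\norm{g}_\infty$, choosing $k=\lfloor n/2\rfloor$ makes both entries of the max geometrically small: the first is $\lesssim\rho^{n/2}\norm{g}_\infty$ and the second is $\leq 2^{-\alpha_1 n/2}v_\alpha(g)$. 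Setting $\sigma:=\max\{\rho^{1/2},2^{-\alpha_1/2}\}<1$ and collecting constants gives $v_\alpha((Q_{\Ktil})^n g)\leq C\sigma^n\norm{g}_\alpha$, hence $\norm{(Q_{\Ktil})^n g}_\alpha=v_\alpha((Q_{\Ktil})^n g)+\norm{(Q_{\Ktil})^n g}_\infty\leq C\sigma^n\norm{g}_\alpha$, and the constants $C$, $\sigma$ depend only on $\alpha_1$ (through $2^{-\alpha_1/2}$) and on the strong mixing constants of $(K,\mu)$, not on $\alpha\in[\alpha_1,\alpha_0]$.

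The main obstacle I anticipate is bookkeeping rather than conceptual: one must make sure the geometric rate in the seminorm recursion is uniform over $\alpha\in[\alpha_1,\alpha_0]$, which is why the worst-case rate $2^{-\alpha_1}$ (attained at the left endpoint) is used and why the constants are allowed to depend on $\alpha_1$; and one must verify carefully that $(\Ktil,\Pp^-_\mu)$ is genuinely strongly mixing, i.e.\ that the strong mixing estimate for $K$ on $L^\infty(\Sigma)$ transfers to $Q_{\Ktil}$ on $L^\infty(X^-)$ — this is because for $f\in L^\infty(X^-)$ the function $(Q_{\Ktil})^n f$ at a point $(\ldots,x_{-1},x_0)$ is an average of $f$ over future coordinates distributed according to $K^n(x_0,\cdot)$ and its iterates, so the deviation from $\langle f,\Pp^-_\mu\rangle$ is controlled by the mixing of $K$ applied coordinate-wise, giving a bound of the form $C\rho^n\norm{f}_\infty$. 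Once these two uniformity points are handled, the remaining estimates are the routine manipulations sketched above.
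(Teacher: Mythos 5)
There is a genuine gap. The decisive step in your proposal is the claim that the strong mixing estimate for $(K,\mu)$ transfers to $Q_{\Ktil}$ on $L^\infty(X^-)$, i.e.\ that $\norm{(Q_{\Ktil})^n g}_\infty \leq C_0\,\rho^n\,\norm{g}_\infty$ for all mean-zero $g\in L^\infty(X^-)$ with constants independent of $g$. This is false, and your heuristic justification (``$(Q_{\Ktil})^n f$ at a point is an average of $f$ over future coordinates'') overlooks that $(Q_{\Ktil})^n f$ still depends on all the original past coordinates $\ldots,x_{-1},x_0$: the averaging only attaches new coordinates $x_1,\ldots,x_n$ and does not wash out dependence on the past. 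A concrete counterexample: take $\phi\in L^\infty(\Sigma)$ with $\int\phi\,d\mu=0$ and $\norm{\phi}_\infty = 1$, and set $g_k(\ldots,x_{-1},x_0):=\phi(x_{-k})$. Then $\langle g_k,\Pp_\mu^-\rangle=0$, and since the integrand does not depend on the new coordinate, $(Q_{\Ktil}\,g_k)(\ldots,x_0)=\phi(x_{-k+1})$, so $(Q_{\Ktil})^n g_k$ has sup-norm exactly $1$ for all $n\leq k$. A uniform bound $C_0\,\rho^n$ would force $C_0\geq\rho^{-k}$ for every $k$, a contradiction. In other words $Q_{\Ktil}$ is \emph{not} quasi-compact and simple on $L^\infty(X^-)=\Hscr_0(X^-)$; the H\"older exponent $\alpha>0$ is essential.

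The paper's proof handles exactly this issue. It decomposes $f=f_k+(f-f_k)$, where $f_k=\EE_\mu^-(f\mid \FF_k^-)$ depends only on the $k$ coordinates $x_{-k+1},\ldots,x_0$. The remainder satisfies $\norm{f-f_k}_\infty\leq v_k(f)\leq 2^{-\alpha k}\,v_\alpha(f)$, and this is where the H\"older seminorm enters the sup-norm estimate. For the truncated part, because $Q_{\Ktil}$ shifts the dependence window forward by one step each time, after $k$ applications $Q_{\Ktil}^k f_k$ depends only on $x_0$, at which point the strong mixing of $K$ on $L^\infty(\Sigma)$ applies and gives the geometric decay $\norm{Q^n(f_k-\EE_\mu^-(f)\,\one)}_\infty \leq 2C\rho^{n-k}\norm{f}_\infty$. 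Taking $k=n/2$ and combining the two pieces yields $\norm{Q^n f-\EE_\mu^-(f)\,\one}_\infty \lesssim \sigma^n\norm{f}_\alpha$ (note: bounded by $\norm{f}_\alpha$, not by $\norm{f}_\infty$), and the seminorm part is then handled as you propose via Proposition~\ref{Vnorm}(2). Your time-splitting of the seminorm recursion is fine and matches the paper; it is the claimed sup-norm contraction on $L^\infty(X^-)$ that needs to be replaced by the $f_k$-truncation argument.
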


\begin{proof}
Given a function $f\in \Hscr_\alpha(X^-)$, denote by $f_k:X^-\to\C$ the following function
$$ f_k(\,\ldots, x_0):= \int_{X^-} f(\,\ldots, x_{-k},\ldots , x_0)\, d\Pp_\mu^-(\,\ldots, x_{-k}) \;. $$
Note that if $\FF_k^-$ is the sub $\sigma$-field of $\FF^-$ generated
by the cylinders in the coordinates $x_{-k+1},\ldots, x_{-1}, x_0$, we have $f_k=\EE_\mu^-(f\vert \FF_k^-)$, and in particular 
$\EE_\mu^-(f_k)=\EE_\mu^-(f)$, for all $k\in\N$.
By definition of $f_k$,
\begin{equation}\label{fk}
\norm{Q^n (f- f_k)}_\infty\leq \norm{f-f_k}_\infty \leq v_k(f)\leq 2^{-\alpha k} v_\alpha(f)\;.
\end{equation}  
Because $(K,\mu)$ is strongly mixing, there are constants $C>0$ and $0<\rho<1$ such that for any function $h\in L^\infty(\Sigma)$ with   $\int_\Sigma h\, d\mu=0$,  
$$ \abs{ \int_\Sigma h(y) K^n(x,dy) }\leq C\,\rho^n \,\norm{h}_\infty \;.$$
Now, if $h\in L^\infty(X^-)$ is a function with zero average,
i.e., $\EE_\mu^-(h)=0$,  which depends only on the first coordinate $x_0$, then $Q^n h$ also depends only on the first coordinate, and  is given by
$$ (Q^n h)(\,\ldots, x_0):= \int_\Sigma h(y) K^n(x_0,dy)\;.$$
Hence 
\begin{equation}\label{Qnh}
\norm{Q^n h}_\infty \leq C\,\rho^n\, \norm{h}_\infty\;. 
\end{equation}
We claim that $h= Q^k(f_k- \EE_\mu^ -(f) \,\one)$ is a function with zero average that depends only on the first coordinate.
The first part of claim follows because $Q$ preserves averages
and, as remarked above, $\EE_\mu^-(f_k)=\EE_\mu^-(f)$.
For the second part notice two things: first $Q$ `preserves' functions that depend only on the first coordinate $x_0$; second, $Q$ maps a 
function $f$ that depends only on the coordinates
$x_{-k},\ldots, x_{-1}, x_0$ to a function that  depends only on the coordinates $x_{-k+1},\ldots, x_{-1}, x_0$, in other words $Q f$ looses dependence in $x_{-k}$.
Therefore, from~ \eqref{Qnh}
\begin{align}\label{Qnfk}
\norm{Q^n(f_k - \EE_\mu^-(f)\,\one) }_\infty
&= \norm{ Q^{n-k} h }_\infty  \leq C\,\rho^{n-k} \,\norm{h}_\infty  \\
&\leq C\,\rho^{n-k} \,\norm{Q^k(f_k- \EE_\mu^ -(f) \,\one)}_\infty \nonumber \\
&\leq C\,\rho^{n-k} \,\norm{ f_k- \EE_\mu^ -(f) \,\one}_\infty  
 \leq 2 C\, \rho^{n-k} \,\norm{f}_\infty  \nonumber
\end{align}
Setting  $\sigma= \max\{ 2^{-{\alpha_1\over 2}}, \sqrt{\rho}\}$  
we have  $0<\sigma<1$.
From the inequalities~\eqref{fk} and~\eqref{Qnfk},  with $k=n/2$, we have
\begin{align*}
\norm{ Q^n f - \EE_\mu^-(f)\,\one }_\infty  &\leq
\norm{ Q^n (f-f_k) }_\infty +
\norm{ Q^n (f_k - \EE_\mu^-(f)\,\one) }_\infty\\
&\leq 2^{-\alpha \frac{n}{2}} v_\alpha(f)+ 2 C \rho^{\frac{n}{2}} \norm{f}_\infty \\
&\leq \sigma^n v_\alpha(f) + 2 C \sigma^n \norm{f}_\infty\;.
\end{align*}
On the other hand, by item (2) of Proposition~ \ref{Vnorm},
\begin{align*}
v_\alpha(Q^n f - \EE_\mu^-(f)\,\one) &= v_\alpha(Q^n (f - \EE_\mu^-(f)\,\one) )\\
&\leq  \max\{\,\norm{ Q^n f - \EE_\mu^-(f)\,\one }_\infty,\,
2^{-n\,\alpha}\,v_\alpha(f)\,\}\\
&\leq  \max\{\,\sigma^n v_\alpha(f) + 2 C \sigma^n \norm{f}_\infty,\,
\sigma^{2n}\,v_\alpha(f)\,\}\\
&\leq \sigma^n v_a(f) + 2 C \sigma^n \norm{f}_\infty\;.
\end{align*}
Thus, for all $f\in \Hscr_\alpha(X^-)$,
\begin{align*}
\norm{ Q^n f - \EE_\mu^-(f)\,\one }_\alpha\leq 4 C \sigma^n \norm{f}_\alpha\;,
\end{align*}
which proves the proposition.
\end{proof}

\medskip

\subsection{Fiber LDT estimates}
\label{random_fiber_ldt}
In this subsection we use theorem~\ref{ALDE:uniform} to establish the fiber LDT theorem~\ref{Fiber:LDT}.
First we  specify the data  $(\B_\alpha,\norm{\cdot}_\alpha)$
and the metric space $\Xfrk$. Then we check that assumptions (B1)-(B7) and (A1)-(A4) hold up.

Consider the space $\randcocycles{\infty}{m}$ of random cocycles over a Markov system $(K,\mu)$.
For each cocycle $A\in\randcocycles{\infty}{m}$ we define 
a Markov kernel on  $\Sigma\times\Sigma\times\Proj$  by
\begin{equation}\label{KhatA:2}
K_A (x,y,p):= \int_\Sigma \delta_{(y,z,A(y,z)p)}\, K(y,dz) \;. 
\end{equation}
We will see that (c.f. corollary~\ref{uniqueness of stationary measure}), under the assumptions of theorem~\ref{Fiber:LDT}, this kernel admits a unique $K_A$-stationary probability measure $\mu_A$ in $\Sigma\times\Sigma\times\Proj$. 
 
For each  $A\in\randcocycles{\infty}{m}$ consider the observable   $\xi_A:\Sigma\times\Sigma\times\Proj\to \R$ 
\begin{equation}\label{fiber:observable}
\xi_A(x,y,p):= \log \norm{ A(x,y)\,p}\;. 
\end{equation}

We can now introduce the metric space of observed Markov systems 

$$ \Xfrk:=\{ \,(K_A,\mu_A, \pm\xi_A)\,\colon\, A\in\randcocycles{\infty}{m}, \; A \; \text{ irreducible}, \; L_1(A)>L_2(A) \,\} \;.$$
This space is identified with a subspace of $\randcocycles{\infty}{m}$, and 
endowed with the distance
$$ \dist\left( (K_A,\mu_A,\xi_A), (K_B,\mu_B,\xi_B)\right) := d_\infty(A,B) \;. $$

Next we define the scale of Banach algebras.
Consider the following projective distance
(see~\cite[formula (1.3)]{LEbook-chap2})
 $$ \delta(\hat p, \hat q):=\frac{ \norm{p\wedge q}}{\norm{p} \norm{q} } \;, $$ 
where  $p\in \hat p$ and $q\in \hat q$.
Given $0\leq \alpha\leq 1$ and $f\in L^\infty(\Sigma\times\Sigma\times\Proj)$, let 
\begin{align}
\norm{f}_\alpha &:= v_\alpha(f) + \norm{f}_\infty \;, \label{norm:a} \\
v_\alpha(f) &:=\sup_{\substack{ x,y,\in\Sigma\\ p\neq q}}  \frac{\abs{f(x,y,p)-f(x,y,q)}}{\delta(p,q)^\alpha} \;. \label{V:a}
\end{align}

\begin{definition}
\label{def H alpha Sigma x Sigma x Proj}
Consider the normed space  $\Hscr_\alpha(\Sigma\times\Sigma\times\Proj)$  of all  functions $f\in L^\infty(\Sigma\times\Sigma\times\Proj)$ such that 
$v_\alpha(f)<+\infty$, endowed with the norm~\eqref{norm:a}.
\end{definition}

\begin{proposition}\label{Holder:fiber:B1:B7}
The family of spaces  
$\Hscr_\alpha(\Sigma\times\Sigma\times\Proj)$ is a scale of Banach algebras satisfying (B1)-(B7). 
\end{proposition}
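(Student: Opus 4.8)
The plan is to transcribe the proof of Proposition~\ref{Holder:B1:B7} almost verbatim, noting that $\Hscr_\alpha(\Sigma\times\Sigma\times\Proj)$ is, slice by slice in $(x,y)\in\Sigma\times\Sigma$, a space of $\alpha$-H\"older functions on $(\Proj,\delta)$, while the dependence on $(x,y)$ enters only as a measurable parameter to which the seminorm $v_\alpha$ of~\eqref{V:a} is blind. First I would record the one ingredient that replaces the ``diameter $\leq 1$'' hypothesis of Proposition~\ref{Holder:B1:B7}, namely that $\delta(\hat p,\hat q)\leq 1$ for all $\hat p,\hat q\in\Proj$, which is immediate from $\norm{p\wedge q}\leq\norm{p}\,\norm{q}$. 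I would also point out that $\delta$ need not obey the triangle inequality, but that this is irrelevant: the verification of (B1)--(B7) uses only the bound $\delta\leq 1$ and the definition of $v_\alpha$ as a supremum of difference quotients.

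Next I would check the axioms in turn. (B1) is the definition~\eqref{norm:a}. For (B2): $v_0(f)$ is the supremum over $(x,y)$ of the oscillation of $f(x,y,\cdot)$, so $v_0(f)\leq 2\norm{f}_\infty$, whence $\Hscr_0(\Sigma\times\Sigma\times\Proj)=L^\infty(\Sigma\times\Sigma\times\Proj)$ with $\norm{\cdot}_0$ equivalent to $\norm{\cdot}_\infty$. (B3) follows pointwise from $\abs{\,\abs{a}-\abs{b}\,}\leq\abs{a-b}$ (and conjugation), giving $v_\alpha(\abs{f}),v_\alpha(\overline f)\leq v_\alpha(f)$. (B4) follows from the product inequality
$$ v_\alpha(f\,g)\leq\norm{f}_\infty\,v_\alpha(g)+\norm{g}_\infty\,v_\alpha(f)\,, $$
established by the same computation as in Proposition~\ref{Holder:B1:B7}; this yields $\norm{f g}_\alpha\leq\norm{f}_\alpha\norm{g}_\alpha$, and $\norm{\one}_\alpha=1$ since $v_\alpha(\one)=0$. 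Properties (B5)--(B6) follow because, for fixed $t\in[0,1]$, the map $\alpha\mapsto t^\alpha$ is nonincreasing, so $\delta^{\alpha_0}\geq\delta^{\alpha_1}\geq\delta^{\alpha_2}$ and hence $v_{\alpha_0}(f)\leq v_{\alpha_1}(f)\leq v_{\alpha_2}(f)$. Finally (B7) is the convexity of $\alpha\mapsto\log v_\alpha(f)$, which I would obtain exactly as in Proposition~\ref{Holder:B1:B7} by applying H\"older's inequality with exponents $1/s$ and $1/(1-s)$ inside the supremum defining $v_{s\alpha_1+(1-s)\alpha_2}(f)$.

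It then remains to verify completeness, so that $\Hscr_\alpha(\Sigma\times\Sigma\times\Proj)$ is genuinely a unital Banach algebra (and a lattice, by (B3)). If $(f_n)$ is $\norm{\cdot}_\alpha$-Cauchy it is $\norm{\cdot}_\infty$-Cauchy, hence converges uniformly to a bounded measurable $f$; passing to the pointwise limit in the estimates $\abs{(f_n-f_m)(x,y,p)-(f_n-f_m)(x,y,q)}\leq v_\alpha(f_n-f_m)\,\delta(p,q)^\alpha$ and using the lower semicontinuity of $v_\alpha$ under pointwise convergence shows $f\in\Hscr_\alpha(\Sigma\times\Sigma\times\Proj)$ and $\norm{f_n-f}_\alpha\to 0$. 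This closes the argument.

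I do not expect a genuine obstacle: the proof is a routine adaptation of Proposition~\ref{Holder:B1:B7}. The only point demanding a little care — and the place I would be most attentive — is the dual observation made at the start, that $v_\alpha$ controls only the projective coordinate and that $\delta$ is a gauge bounded by $1$ rather than a true metric. Neither feature disturbs any of (B1)--(B7), but both are worth stating explicitly so that the reduction to Proposition~\ref{Holder:B1:B7} is clean.
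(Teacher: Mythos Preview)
Your proposal is correct and follows essentially the same approach as the paper's proof, which simply checks (B1)--(B7) by the same computations as in Proposition~\ref{Holder:B1:B7} and refers back to that proposition for the convexity argument behind (B7). Your version is more thorough---you add the explicit completeness argument and the remark that $\delta\leq 1$---and your caution that $\delta$ might not satisfy the triangle inequality is in fact unnecessary (it does: $\delta$ is the sine metric on $\Proj$), but none of this affects the validity of the argument.
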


\begin{proof}
(B1) holds by definition of the H\"older norm $\norm{\cdot}_\alpha$.
For (B2) notice that $v_0(f)$ measures the maximum oscillation of $f$ on the projective fibers, and hence 
$v_0(f)\leq 2\,\norm{f}_\infty$.
Property (B3) is obvious.
Assumption (B4) is a consequence of the inequality
$$v_\alpha(f\,g)\leq \norm{f}_\infty\,v_\alpha(g) + \norm{g}_\infty\,v_\alpha(f),\quad f,g\in L^\infty(\Sigma)\;. $$

The monotonicity properties (B5) and (B6) are straightforward to check.
The assumption (B7) follows from the convexity of the function  $\alpha\mapsto \log v_\alpha(f)$, whose proof is analogous to that of proposition~\ref{Holder:B1:B7}.
\end{proof}

\begin{definition}
\label{def H alpha Sigma x Proj}
We define $\Hscr_\alpha( \Sigma\times\Proj)$
to be the subspace of functions $f(x,y,p)$ in $\Hscr_\alpha(\Sigma\times\Sigma\times\Proj)$ 
that do not depend on the first coordinate $x$.
\end{definition}
This subspace is clearly a closed sub-algebra of  $\Hscr_\alpha(\Sigma\times\Sigma\times\Proj)$.
Therefore, 
\begin{proposition}\label{Holder:fiber:B1:B7}
The family
  $ \Hscr_\alpha( \Sigma\times\Proj)$  is a  scale of Banach sub-algebras  satisfying  (B1)-(B7).
\end{proposition}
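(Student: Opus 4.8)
The plan is to deduce everything from the observation, just recorded above, that $\Hscr_\alpha(\Sigma\times\Proj)$ is a closed unital subalgebra and sublattice of the scale $\Hscr_\alpha(\Sigma\times\Sigma\times\Proj)$, for which (B1)--(B7) have already been verified. Concretely, I would first make explicit the identification of functions $f(x,y,p)$ that do not depend on $x$ with functions on $\Sigma\times\Proj$; under this identification $\Hscr_\alpha(\Sigma\times\Proj)$ is a linear subspace of $\Hscr_\alpha(\Sigma\times\Sigma\times\Proj)$, it is closed (since $\norm{\cdot}_\infty\leq\norm{\cdot}_\alpha$, a $\norm{\cdot}_\alpha$-limit of $x$-independent functions is a uniform limit, hence again $x$-independent), it contains $\one$, and it is stable under products and under $f\mapsto\bar f$, $f\mapsto\abs{f}$, since each of these operations preserves independence of the first variable.

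Granting this, (B1) is immediate from the definition~\eqref{norm:a} of $\norm{\cdot}_\alpha$; (B3) and (B4) hold because a closed unital subalgebra that is also stable under the lattice operations is itself a lattice and a Banach algebra, with the same unit, and $v_\alpha(\one)=0$. For (B2) I would note, exactly as in the previous proof, that $v_0$ only records the oscillation along the projective fibres, so $v_0\leq 2\,\norm{\cdot}_\infty$; hence $\Hscr_0(\Sigma\times\Proj)$ is precisely the space of bounded Borel functions on $\Sigma\times\Proj$ and $\norm{\cdot}_0$ is equivalent to $\norm{\cdot}_\infty$ there. Finally, the scale axioms (B5)--(B7) only assert inclusions among the spaces and interpolation inequalities among the seminorms $v_\alpha$; since the seminorm $v_\alpha$ on $\Hscr_\alpha(\Sigma\times\Proj)$ is the restriction of the one on $\Hscr_\alpha(\Sigma\times\Sigma\times\Proj)$, these follow verbatim from the corresponding statements for the larger scale.

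I do not expect any genuine difficulty here; the only point worth stating carefully is the bookkeeping in (B2), namely that the Banach space $\B_0$ attached to this sub-scale is the $L^\infty$ space of the relevant base $\Sigma\times\Proj$ (equivalently, of the $x$-independent functions), and not of the full triple product. Once that identification is fixed, the remaining assertions are routine inheritance from the preceding proposition, so I would keep the written proof to a couple of lines, simply citing that proposition together with the fact that $\Hscr_\alpha(\Sigma\times\Proj)$ is a closed unital subalgebra of it.
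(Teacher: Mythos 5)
Your proposal is correct and follows essentially the same route as the paper, which disposes of the statement in one line by observing that $\Hscr_\alpha(\Sigma\times\Proj)$ is a closed sub-algebra of $\Hscr_\alpha(\Sigma\times\Sigma\times\Proj)$ and invoking the previously established (B1)--(B7) for the larger scale. You add a few legitimate but routine details (closedness via uniform limits, the bookkeeping in (B2) identifying $\B_0$ with $L^\infty(\Sigma\times\Proj)$), none of which deviates from the paper's argument.
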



Given   $A\in\randcocycles{\infty}{m}$, consider the linear transformation
$Q_A:L^\infty(\Sigma\times\Sigma\times\Proj)\to L^\infty(\Sigma\times\Sigma\times\Proj)$ defined by
\begin{equation}\label{QA}
 (Q_A f)(x,y,p):= \int_\Sigma f(y,z, A(y,z) p)\, K(y,dz) \;. 
\end{equation}
This is the Markov operator associated with the kernel ~\eqref{KhatA:2}.

\medskip

Assumption (A1) follows from the definition of $\Xfrk$.

\newcommand{\xim}[1]{\kappa^{#1}_\alpha}
\newcommand{\ximo}[1]{\kappa^{#1}_{\alpha_0}}

Since $(Q_A f)(x,y,p)$ does not depend on the coordinate $x$,
the Markov operator $Q_A$ leaves invariant the subspace of functions
$f(x,y,p)$ that are constant in  $x$. Next, we are going to see that $Q_A$
acts invariantly on the subspace $\Hscr_\alpha(  \Sigma\times\Proj)$.

Given $A\in\randcocycles{\infty}{m}$ and $0<\alpha\leq 1$, define for all $n\in\N$,
\begin{equation} \label{xi:alpha}
\xim{n}(A) := \sup_{x\in\Sigma, p\neq q} \EE_x\left[ \left(
\frac{\delta(\Ahat^{(n)}\,p, \Ahat^{(n)}\,q)}{\delta(p,q)}
\right)^\alpha\right] \in [0,+\infty] 
\end{equation}


\begin{lemma} \label{semigroup} 
Let $A \in \randcocycles{\infty}{m}$ and $n\in\N$.
\begin{enumerate}
\item[(a)]
$\norm{ \Ahat^{(\pm n)}}_\infty \leq \max\{ \norm{A}_\infty, \norm{A^{-1}}_\infty\}^n$.

\item[(b)]  
$ \norm{ \Ahat^{(n)} - B^{(n)}}_\infty  \leq n\,\max\{\norm{A}_\infty, \norm{B}_\infty \}^ {n-1}
\,  \norm{A - B}_\infty$.
\end{enumerate}
\end{lemma}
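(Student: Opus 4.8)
The plan is to derive both estimates from submultiplicativity of the operator norm together with an elementary telescoping identity; neither part uses any dynamics beyond the definition of the iterated cocycle, so the whole argument should be short.

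For part (a), I would fix $x=(x_n)_{n\in\Z}\in X$ and observe that, by definition, $\Ahat^{(n)}(x)=A(x_{n-1},x_n)\,A(x_{n-2},x_{n-1})\cdots A(x_0,x_1)$ is a product of $n$ matrices, each of operator norm at most $\norm{A}_\infty$; submultiplicativity then gives $\norm{\Ahat^{(n)}(x)}\le\norm{A}_\infty^{\,n}\le\max\{\norm{A}_\infty,\norm{A^{-1}}_\infty\}^n$. For the negative iterates I would use the cocycle convention $\Ahat^{(-n)}(x)=\bigl(\Ahat^{(n)}(T^{-n}x)\bigr)^{-1}=A(x_{-n},x_{-n+1})^{-1}\cdots A(x_{-1},x_0)^{-1}$, again a product of $n$ matrices, each now of norm at most $\norm{A^{-1}}_\infty$, whence $\norm{\Ahat^{(-n)}(x)}\le\norm{A^{-1}}_\infty^{\,n}\le\max\{\norm{A}_\infty,\norm{A^{-1}}_\infty\}^n$. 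Taking the supremum over $x\in X$ finishes (a).

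For part (b), I would write $A_k:=A(x_{k-1},x_k)$ and $B_k:=B(x_{k-1},x_k)$ for $1\le k\le n$, so that $\Ahat^{(n)}(x)=A_n\cdots A_1$ and $B^{(n)}(x)=B_n\cdots B_1$, and then invoke the telescoping identity
\[
A_n\cdots A_1-B_n\cdots B_1=\sum_{k=1}^{n}\bigl(B_n\cdots B_{k+1}\bigr)\,(A_k-B_k)\,\bigl(A_{k-1}\cdots A_1\bigr),
\]
with empty products read as the identity matrix, which follows at once by induction on $n$. Setting $M:=\max\{\norm{A}_\infty,\norm{B}_\infty\}$ and using $\norm{A_k},\norm{B_k}\le M$ together with $\norm{A_k-B_k}\le\norm{A-B}_\infty$, submultiplicativity bounds each of the $n$ summands by $M^{n-1}\norm{A-B}_\infty$; summing and taking the supremum over $x\in X$ gives $\norm{\Ahat^{(n)}-B^{(n)}}_\infty\le n\,M^{n-1}\,\norm{A-B}_\infty$. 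I expect no real obstacle here: the only points needing a moment's care are fixing the convention for $\Ahat^{(-n)}$ so that the bound in (a) is meaningful, and checking the telescoping identity, both of which are routine.
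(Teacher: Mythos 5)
Your proof is correct and follows essentially the same approach as the paper: part (a) is the straightforward submultiplicativity bound (which the paper leaves to the reader), and part (b) is the standard telescoping decomposition followed by term-by-term bounds. The only cosmetic difference is that your telescoping identity places the $B$ factors to the left of the difference and the $A$ factors to the right, whereas the paper uses the mirror-image arrangement; both are valid and yield the same estimate.
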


\begin{proof}
Item (a) is straightforward. To prove (b), we use the formula

\quad
$\displaystyle  \Ahat^{(n)}-B^{(n)} = 
\sum_{j=0}^{n-1}  (\Ahat^{(j)} \circ \transl^{n-j}) (\Ahat\circ \transl^{n-1-j} - B\circ \transl^{n-1-j})\, B^{(n-1-j)}$.\; 
\end{proof}

The following lemma highlights the importance of this quantity.

\begin{lemma} \label{Va:QKa}
Given $A\in\randcocycles{\infty}{m}$, $f\in \Hscr_\alpha(\Sigma\times \Proj)$ and $n\in\N$,
$$ v_\alpha( Q_{A}^n f) \leq \xim{n}(A)\, v_\alpha(f)\;. $$
\end{lemma}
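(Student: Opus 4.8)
The plan is to estimate the oscillation of $Q_A^n f$ on a projective fiber by unwinding the definition of $Q_A^n$ as an expectation over the Markov chain, and then using that $f$ is $\alpha$-H\"older with constant $v_\alpha(f)$ in the projective variable (uniformly in the $\Sigma$-coordinates). First I would record the iterated formula: for $f\in\Hscr_\alpha(\Sigma\times\Proj)$ the function $(Q_A^n f)(x,y,p)$ does not depend on $x$, and a straightforward induction on $n$ (or direct expansion of the kernel~\eqref{KhatA:2} composed $n$ times) gives
$$ (Q_A^n f)(x,y,p) = \EE_y\bigl[ f\bigl(\,\cdot\,,\, e_n,\, \Ahat^{(n)}\, p\bigr)\bigr], $$
where the expectation is over trajectories $(e_0,e_1,\dots,e_n)$ of the Markov chain started at $e_0=y$, and $\Ahat^{(n)}=\Ahat^{(n)}(e_0,\dots,e_n)$ is the corresponding cocycle product (note the relevant first-coordinate arguments of $f$ are dummy since $f\in\Hscr_\alpha(\Sigma\times\Proj)$). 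This is the point where one must be slightly careful to match the indexing convention in~\eqref{QA} and in the definition of $\Ahat^{(n)}$, so that the product $\Ahat^{(n)}$ appearing inside the expectation is exactly the one entering~\eqref{xi:alpha}.

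Next, fix $x\in\Sigma$, $y\in\Sigma$ and two distinct directions $p\neq q$ in $\Proj$. Then
$$ \abs{(Q_A^n f)(x,y,p) - (Q_A^n f)(x,y,q)} \leq \EE_y\Bigl[ \bigl| f(\,\cdot\,, e_n, \Ahat^{(n)} p) - f(\,\cdot\,, e_n, \Ahat^{(n)} q)\bigr| \Bigr]. $$
Since $f\in\Hscr_\alpha(\Sigma\times\Proj)$, for each fixed realization of the chain we bound the integrand by $v_\alpha(f)\,\delta(\Ahat^{(n)} p, \Ahat^{(n)} q)^\alpha$. Dividing by $\delta(p,q)^\alpha$ and pulling it inside the expectation (it is a constant there), we obtain
$$ \frac{\abs{(Q_A^n f)(x,y,p)-(Q_A^n f)(x,y,q)}}{\delta(p,q)^\alpha} \leq v_\alpha(f)\, \EE_y\left[\left(\frac{\delta(\Ahat^{(n)} p, \Ahat^{(n)} q)}{\delta(p,q)}\right)^\alpha\right] \leq v_\alpha(f)\,\xim{n}(A), $$
using the definition~\eqref{xi:alpha} of $\xim{n}(A)$ as the supremum over the starting state and over $p\neq q$. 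Taking the supremum of the left-hand side over $x,y\in\Sigma$ and $p\neq q$ yields $v_\alpha(Q_A^n f)\leq \xim{n}(A)\,v_\alpha(f)$, which is the claim.

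The only genuinely delicate point is the first step: establishing the probabilistic representation of $Q_A^n f$ and making sure the cocycle product that shows up is precisely $\Ahat^{(n)}$ with the correct ordering and the correct dependence on the sampled states, so that it plugs directly into~\eqref{xi:alpha}; the rest is the H\"older estimate together with monotonicity of $t\mapsto t^\alpha$ and the triangle-free, purely pointwise nature of the bound (no cocycle identity for $\delta$ is needed, only that $f$ is H\"older in the projective slot uniformly in the base). I expect no real obstacle beyond bookkeeping, and I would present the representation either by a one-line induction or by directly iterating the defining integral~\eqref{QA}.
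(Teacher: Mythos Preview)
Your proposal is correct and follows essentially the same approach as the paper: write $Q_A^n f$ as an expectation over the Markov chain of $f(e_n,\Ahat^{(n)}p)$, bound the fiberwise difference by $v_\alpha(f)\,\delta(\Ahat^{(n)}p,\Ahat^{(n)}q)^\alpha$ inside the expectation, divide by $\delta(p,q)^\alpha$, and take suprema to recover $\xim{n}(A)$. The only cosmetic difference is that the paper immediately treats $f$ as a function of a single $\Sigma$-coordinate and $p$ (writing $(Q_A^n f)(x_0,p)$), whereas you keep the $(x,y,p)$ notation and explicitly note the first slot is dummy; both are equivalent given Definition~\ref{def H alpha Sigma x Proj}.
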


\begin{proof} For any $f\in \Hscr_\alpha(\Sigma\times \Proj)$,
and $(x_0,p)\in\Sigma\times\Proj$,
\begin{align*}
(Q_{A}^n f)(x_0,p)
& = \int_{\Sigma^n} f(x_n, A(x_{n-1},x_n)\ldots A(x_0,x_1) \, p)\,\prod_{j=0}^{n-1} K(x_j,d x_{j+1})\\
&  = \EE_{x_0}\left[ f(e_n,A^{(n)} p)  \right]\;.
\end{align*}
Hence
\begin{align*}
 v_\alpha(  Q_{A}^n f ) & = \sup_{x\in\Sigma,p\neq q}
\frac{\abs{ \EE_x\left[ f(e_n,A^{(n)} p) - f(e_n,A^{(n)} q) \right] } }{\delta(p,q)^\alpha}\\
& \leq \sup_{x\in\Sigma,p\neq q}
\frac{ \EE_x\left[ \abs{f(e_n,A^{(n)} p) - f(e_n,A^{(n)} q)} \right]  }{\delta(p,q)^\alpha}\\
& \leq v_\alpha(f)\, \sup_{x\in\Sigma,p\neq q}
\EE_x\left[ \left(\frac{ \delta( A^{(n)} p,A^{(n)} q) }{\delta(p,q)}\right)^ \alpha \right] \\
& = v_\alpha(f) \, \xim{n}(A)  \;.
\qquad \qquad  \qquad \qquad \qquad \qquad \qquad 
\end{align*}
\end{proof}


\begin{lemma}  \label{xi:sub-multiplicative}
The sequence $\{\xim{n}(A)\}_{n\geq 0}$ is
sub-multiplicative, i.e.,  
$$ \xim{n+\ell}(A)\leq \xim{n}(A)\, \xim{\ell}(A)\quad
\text{ for }\; n,\ell\in\N\;. $$
In particular,
$$ \lim_{n\to+\infty} \xim{n}(A)^{1/n} = \inf\{\,\xim{n}(A)^{1/n}
\, :\, n\in\N\, \} \;. $$
\end{lemma}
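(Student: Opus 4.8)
The plan is to establish sub-multiplicativity directly from the definition of $\xim{n}(A)$, using the Markov property (the tower rule for conditional expectations) and the multiplicative cocycle structure of $\Ahat^{(n)}$, and then to deduce the existence of the limit as an immediate consequence of Fekete's subadditive lemma applied to the logarithm.

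First I would fix $n,\ell\in\N$ and recall that the cocycle identity gives $\Ahat^{(n+\ell)}(x) = \Ahat^{(\ell)}(\transl^n x)\cdot \Ahat^{(n)}(x)$, so that for any $x_0\in\Sigma$ and any pair of distinct projective points $p\neq q$, writing $p' = \Ahat^{(n)}(x)\,p$ and $q' = \Ahat^{(n)}(x)\,q$ (which are still distinct since $\Ahat^{(n)}$ is invertible), we can telescope the ratio of projective distances:
\begin{equation*}
\frac{\delta(\Ahat^{(n+\ell)}\,p,\,\Ahat^{(n+\ell)}\,q)}{\delta(p,q)}
= \frac{\delta(\Ahat^{(\ell)}(\transl^n x)\,p',\,\Ahat^{(\ell)}(\transl^n x)\,q')}{\delta(p',q')}
\cdot \frac{\delta(\Ahat^{(n)}(x)\,p,\,\Ahat^{(n)}(x)\,q)}{\delta(p,q)}\;.
\end{equation*}
Raising to the power $\alpha$ and taking $\EE_{x_0}$, I would condition on the first $n$ coordinates: by the Markov property, the conditional law of $e_{n+1}, e_{n+2},\dots$ given $(e_0,\dots,e_n)$ depends only on $e_n$, and the factor $\bigl(\delta(\Ahat^{(\ell)}(\transl^n x)\,p',\Ahat^{(\ell)}(\transl^n x)\,q')/\delta(p',q')\bigr)^\alpha$ is a function of $e_n, e_{n+1},\dots,e_{n+\ell}$ of exactly the form whose conditional expectation given $e_n = y$ is bounded above by $\xim{\ell}(A)$, \emph{uniformly in $y$ and in the points $p',q'$}. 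Hence the conditional expectation of the product, given the first $n$ coordinates, is at most $\xim{\ell}(A)\cdot\bigl(\delta(\Ahat^{(n)}(x)\,p,\Ahat^{(n)}(x)\,q)/\delta(p,q)\bigr)^\alpha$; taking $\EE_{x_0}$ of this and then the supremum over $x_0$ and $p\neq q$ yields $\xim{n+\ell}(A)\leq \xim{n}(A)\,\xim{\ell}(A)$. The step I expect to be the main (though modest) obstacle is making the conditioning argument fully rigorous when the relevant quantity $\xim{\ell}(A)$ may be $+\infty$: one should either first treat the case where all terms are finite and note the inequality is trivial otherwise, or work with truncations; also one must be careful that the "starting points" $p',q'$ for the inner expectation are themselves random (depending on $x$), which is precisely why the uniform supremum over $p\neq q$ in the definition~\eqref{xi:alpha} is essential.

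For the second assertion, set $a_n := \log \xim{n}(A) \in [-\infty,+\infty]$; sub-multiplicativity of $\{\xim{n}(A)\}$ means $a_{n+\ell}\leq a_n + a_\ell$, i.e. the sequence $\{a_n\}$ is subadditive. By Fekete's lemma, $\lim_{n\to\infty} a_n/n = \inf_{n\geq 1} a_n/n$ (with values in $[-\infty,+\infty)$), and exponentiating gives
\begin{equation*}
\lim_{n\to+\infty}\xim{n}(A)^{1/n} = \inf_{n\in\N}\xim{n}(A)^{1/n}\;,
\end{equation*}
which is the claimed identity. This completes the proof.
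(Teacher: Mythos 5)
Your proof is correct and takes essentially the same route as the paper: telescope the distortion ratio via the cocycle identity $\Ahat^{(n+\ell)} = (\Ahat^{(\ell)}\circ T^n)\,\Ahat^{(n)}$, condition on $\Fcal_n$ and bound the inner factor uniformly by $\xim{\ell}(A)$ (using the Markov property and the fact that the definition takes a supremum over all starting states and pairs of distinct projective points), and then finish with Fekete's subadditive lemma. If anything, your explicit treatment of the conditioning step is cleaner than the paper's inequality chain, which writes the two bounds in a somewhat nonstandard order; your cautionary remarks about possible $+\infty$ values and the random "starting points" $p',q'$ are sensible but harmless.
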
 

\begin{proof} Let us write  $M_n=\Ahat^{(n)}$. Given $x\in\Sigma$ and $p\neq q$ in $\Proj$,
\begin{align*}
& \EE_x\left[ \left(\frac{\delta(M_{n+m} p, M_{n+m} q)}{\delta(p,q)} \right)^\alpha \right] \leq \\
&\leq   \EE_x\left[ \left(\frac{\delta((M_{n}\circ\transl^m) M_m p, (M_{n}\circ\transl^m) M_m q)}{\delta(M_{m} p, M_{m} q) } \right)^\alpha   \,
  \left(\frac{\delta(M_{m} p, M_{m} q)}{\delta(p,q)} \right)^\alpha \right] \\
&\leq   \xim{m}\, \EE_x\left[ \left(\frac{\delta((M_{n}\circ\transl^m) M_m p, (M_{n}\circ\transl^m) M_m q)}{\delta(M_{m} p, M_{m} q) } \right)^\alpha   \right]  \\
&\leq  \xim{m}\, \sup_{p\neq q} \EE_{K^m(x,\cdot)} \left[ \left(\frac{\delta( M_{n} p, M_{n} q)}{\delta(p,q) } \right)^\alpha   \right]   \leq \xim{m} \, \xim{n}\;, 
\end{align*}
and taking the sup we get $\xim{n+m} \leq \xim{n}\, \xim{m}$.
\end{proof}

These constants become finite provided $\alpha$ is small enough.

\begin{lemma} \label{xi:finite} Given $A\in\randcocycles{\infty}{m}$ and $n\in\N$ for all \, $0<\alpha\leq \frac{1}{4n}$,
$$\xim{n}(A) \leq \max\{\norm{A}_\infty,\norm{A^{-1}}_\infty\}\;.$$
\end{lemma}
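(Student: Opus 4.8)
The plan is to estimate the integrand $\bigl(\delta(\Ahat^{(n)}p,\Ahat^{(n)}q)/\delta(p,q)\bigr)^\alpha$ pointwise, using only the crude operator-norm bounds on $\Ahat^{(n)}$ and its inverse from Lemma~\ref{semigroup}(a), and then observe that when $\alpha$ is small enough the exponential loss from the pointwise bound is absorbed by the factor $n$ in the exponent $\norm{A}_\infty^n$. First I would recall the basic distortion estimate for the projective distance $\delta$ under a linear map $g\in\GL(m,\R)$: for any $\hat p\neq\hat q$,
$$ \frac{\delta(g\,\hat p, g\,\hat q)}{\delta(\hat p,\hat q)} \leq \norm{g}\,\norm{g^{-1}}\;. $$
This is immediate from the definition $\delta(\hat p,\hat q)=\norm{p\wedge q}/(\norm{p}\norm{q})$, since $\norm{gp\wedge gq}\leq \norm{\wedge_2 g}\,\norm{p\wedge q}\leq\norm{g}^2\norm{p\wedge q}$ while $\norm{gp}\geq\norm{g^{-1}}^{-1}\norm{p}$ and likewise for $q$, so the ratio is at most $\norm{g}^2\norm{g^{-1}}^2$; a slightly more careful bookkeeping (or simply using $\norm{\wedge_2 g}\le\norm g^2$ and cancelling one factor) gives $\norm{g}\norm{g^{-1}}$, and in any case a bound of the form $\norm{g}^{C}\norm{g^{-1}}^{C}$ suffices for the argument.

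Next I would apply this with $g=\Ahat^{(n)}(x)$, which depends on the sample point but whose norm and inverse-norm are controlled uniformly by Lemma~\ref{semigroup}(a): setting $R:=\max\{\norm{A}_\infty,\norm{A^{-1}}_\infty\}$ we have $\norm{\Ahat^{(n)}},\norm{(\Ahat^{(n)})^{-1}}\leq R^n$, hence
$$ \frac{\delta(\Ahat^{(n)}p,\Ahat^{(n)}q)}{\delta(p,q)} \leq R^{2n} $$
pointwise in the sample point and in $p\neq q$ (adjust the exponent $2n$ to whatever the chosen distortion bound yields). Raising to the power $\alpha$ and taking the expectation and suprema in the definition~\eqref{xi:alpha} of $\xim{n}(A)$ gives
$$ \xim{n}(A) \leq R^{2n\alpha}\;. $$
Finally, if $0<\alpha\leq\frac1{4n}$ then $2n\alpha\leq\frac12\leq 1$, so $R^{2n\alpha}\leq R=\max\{\norm{A}_\infty,\norm{A^{-1}}_\infty\}$, which is the claimed inequality. (The constant $\frac1{4n}$ is comfortably safe; any threshold that makes the total exponent at most $1$ works, and the factor $4$ leaves room for whichever explicit distortion constant one prefers to use.)

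The only genuine content here is the projective-distance distortion bound $\delta(g\hat p,g\hat q)\leq \norm{g}\norm{g^{-1}}\,\delta(\hat p,\hat q)$ (equivalently its $\wedge_2$-form), which is a standard fact but should be stated cleanly, with the constant tracked carefully enough to be consistent with the exponent $\frac1{4n}$ in the statement; everything else is substituting the uniform bounds of Lemma~\ref{semigroup}(a) and a one-line arithmetic check on the exponent. I do not expect any real obstacle—the main point to be careful about is bookkeeping of the exponent so that the stated range $0<\alpha\le\frac1{4n}$ indeed forces $\xim{n}(A)\le R$.
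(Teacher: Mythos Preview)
Your approach is essentially the same as the paper's: both obtain a pointwise bound of the form
\[
\frac{\delta(\Ahat^{(n)}p,\Ahat^{(n)}q)}{\delta(p,q)}\;\le\;R^{4n},
\qquad R:=\max\{\norm{A}_\infty,\norm{A^{-1}}_\infty\},
\]
then raise to the power $\alpha$ and use $4n\alpha\le 1$ together with $R\ge 1$. The paper obtains the exponent $4$ by quoting an external lemma (\cite[Lemma~3.26]{LEbook-chap2}) giving $\log\bigl(\delta(Mp,Mq)/\delta(p,q)\bigr)\le 4\,\ell(M)$ with $\ell(M)=\max\{\log\norm{M},\log\norm{M^{-1}}\}$; your direct computation via $\norm{\wedge_2 g}\le\norm{g}^2$ and $\norm{gp}\ge\norm{g^{-1}}^{-1}\norm{p}$ yields the same exponent, namely $\norm{g}^2\norm{g^{-1}}^2\le R^{4n}$.

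One small correction: your passing claim that ``more careful bookkeeping'' gives the sharper bound $\norm{g}\norm{g^{-1}}$ is not justified by the argument you sketch (no factor actually cancels), and you should not rely on it. Fortunately you don't need it: the bound $\norm{g}^2\norm{g^{-1}}^2$ that you do establish gives $\xim{n}(A)\le R^{4n\alpha}\le R$ exactly when $\alpha\le\tfrac{1}{4n}$, matching the statement on the nose. Just drop the unsubstantiated sharper claim and the proof is clean.
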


\begin{proof} We write as before  $M_n=\Ahat^{(n)}$.
Recall that given $M\in\GL(m,\R)$,  the quantity
$ \ell(M):= \max\{ \log \norm{M},\log \norm{M^{-1}} \}$ is sub-multiplicative, in the sense that for any matrices $M_1,M_2\in \GL(m,\R)$, $\ell(M_1\,M_2)\leq \ell(M_1)+\ell(M_2)$.
By~\cite[Lemma 3.26]{LEbook-chap2}, given $x\in\Sigma$, and $p\neq q$ in $\Proj$,
$$
 \EE_x\left[ \left(
\frac{\delta(M_n\,p, M_n\,q)}{\delta(p,q)}
\right)^\alpha\right] 
 = \EE_x\left[\exp \left(\alpha\,\log 
\frac{\delta(M_n\,p, M_n\,q)}{\delta(p,q)}
\right) \right] \leq \EE_x\left[ e^{4\,\alpha\, \ell(M_n) }\right]  \;.
$$
If $0<\alpha\leq \frac{1}{4n}$, setting  $c:=\max\{ \log \norm{A}_\infty, \log \norm{A^{-1}}_\infty \}$ 
$$\EE_x\left[ e^{4\,\alpha\, \ell(M_n) }\right] \leq e^{4 n \alpha c}\leq e^{c}=\max\{\norm{A}_\infty, \norm{A^{-1}}_\infty\}\;. $$
Hence, taking the sup in $x$  and $p\neq q$  we obtain $\xim{n}\leq \max\{\norm{A}_\infty, \norm{A^{-1}}_\infty\}$.
\end{proof}

By the previous lemmas  the operator
$Q_A$ leaves  the subspace $\Hscr_\alpha(\Sigma\times\Proj)$ invariant, for all small enough $\alpha>0$.
To prove that   $Q_A$ is quasi-compact and simple  all   hypothesis of theorem~\ref{Fiber:LDT} are essential. The irreducibility and gap assumptions are used in the following lemmas.

\begin{lemma}\label{uniform:conv}
Given $A\in \randcocycles{\infty}{m}$ such that $(K_A,\mu_A,\xi_A)\in\Xfrk$ 
 $$ \lim_{n\to+\infty} \frac{1}{n}\,\EE_x(\log \norm{\Ahat^{(n)} \,p}) = L_1(A)\;, $$
with uniform convergence in $(x,p)\in\Sigma\times\Proj$.
\end{lemma}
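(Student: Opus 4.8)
The plan is to recast $u_n(x,p):=\frac1n\,\EE_x(\log\norm{\Ahat^{(n)}\,p})$ as a Cesàro mean of $Q_A$-iterates of a single bounded observable, and then to squeeze its limit between two Fekete limits, both of which are identified with $L_1(A)$. By Lemma~\ref{semigroup}(a), $\abs{\log\norm{\Ahat^{(n)}(x)\,p}}\le n\,\ell$ with $\ell:=\log\max\{\norm{A}_\infty,\norm{A^{-1}}_\infty\}$, so $\{u_n\}$ is uniformly bounded. Telescoping along the cocycle and conditioning on the first $k$ coordinates, exactly as in the computation in the proof of Lemma~\ref{Va:QKa}, the conditional expectation of the $k$-th increment of $\log\norm{\Ahat^{(\cdot)}(x)\,p}$ equals $h(x_k,\hatp_k)$, where $\hatp_k\in\Proj$ is the direction of $\Ahat^{(k)}(x)\,p$ and
\[
 h(y,\hatq):=\int_\Sigma\log\norm{A(y,z)\,\hatq}\,K(y,dz)
\]
is a bounded function on $\Sigma\times\Proj$, independent of the first variable, hence an element of $\Hscr_0(\Sigma\times\Proj)$. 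Summing the increments yields the identity $n\,u_n(x,p)=\sum_{k=0}^{n-1}(Q_A^k h)(x,p)$, so it suffices to prove that the Cesàro means of the orbit $\{Q_A^k h\}_{k\ge0}$ converge, uniformly on $\Sigma\times\Proj$, to the constant $\langle h,\mu_A\rangle$, and that this constant equals $L_1(A)$.

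For the soft two-sided bound, put $\overline u_n:=\sup_{x,p}\EE_x(\log\norm{\Ahat^{(n)}p})$ and $\underline u_n:=\inf_{x,p}\EE_x(\log\norm{\Ahat^{(n)}p})$. From the cocycle identity $\Ahat^{(n+m)}(x)=\Ahat^{(m)}(\transl^n x)\,\Ahat^{(n)}(x)$ and the Markov property (the conditional law of $\Ahat^{(m)}(\transl^n x)$ given $x_0,\dots,x_n$ depends only on $x_n$), one checks that $\overline u_n$ is subadditive and $\underline u_n$ superadditive, so $\overline u_n/n\to\overline L:=\inf_n\overline u_n/n$, $\underline u_n/n\to\underline L:=\sup_n\underline u_n/n$, and $\underline L\le\overline L$. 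Since $\underline u_n\le n\,u_n(x,p)\le\overline u_n$, the uniform convergence of $u_n$ to a common value follows once $\underline L=\overline L=L_1(A)$. The upper bound $\overline L\le L_1(A)$ uses only the strong mixing of $(K,\mu)$: from $\log\norm{\Ahat^{(n)}p}\le\log\norm{\Ahat^{(n)}}$, subadditivity of $\log\norm{\cdot}$ along the cocycle, and $\abs{\int g\,dK^\ell(x,\cdot)-\int g\,d\mu}\le C\rho^\ell\norm{g}_\infty$ applied to the bounded function $g(y):=\EE_y(\log\norm{\Ahat^{(n-\ell)}})$, one obtains, for every $\ell$, $\overline u_n\le\EE_\mu(\log\norm{\Ahat^{(n-\ell)}})+\ell^2+C\,n\,\ell\,\rho^\ell$, whence $\overline L\le\lim_n\frac1n\EE_\mu(\log\norm{\Ahat^{(n)}})=L_1(A)$ by Kingman's subadditive ergodic theorem.

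The reverse inequality $\underline L\ge L_1(A)$ — equivalently, the uniform-in-$(x,p)$ convergence of the Cesàro means above — is the heart of the matter, and the only place where irreducibility, together with the gap $L_1(A)>L_2(A)$, is used: it says that even the slowest deterministic direction must grow at the full top rate, uniformly in the starting state. By Furstenberg's theorem for irreducible cocycles (developed in the earlier chapters of~\cite{LEbook}, see also~\cite{LEbook-chap2}), for each fixed $p$ one has $\frac1n\log\norm{\Ahat^{(n)}(x)\,p}\to L_1(A)$ for $\Pp_\mu$-a.e.\ $x$ — informally, a positive-measure set of trajectories along which $p$ grows strictly slower than $L_1(A)$ would yield a proper $A$-invariant structure incompatible with irreducibility and the gap (the latter guaranteeing a proper nonzero top Oseledets sub-bundle). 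To upgrade this almost sure statement to the required uniformity one invokes the contraction of the projective random dynamics over the strongly mixing base: the laws of the normalized iterates $\Ahat^{(n)}(\cdot)\,\hatp$ converge to $\mu_A$ uniformly in $(x,\hatp)$, so the Cesàro means $\frac1n\sum_{k<n}(Q_A^k h)(x,p)$ converge uniformly to $\langle h,\mu_A\rangle$, which by Furstenberg's formula is the $\mu_A$-average of the one-step expected log-growth, namely $L_1(A)$. Combined with $\underline L\le\overline L\le L_1(A)$ this gives $u_n\to L_1(A)$ uniformly. The main obstacle is precisely this uniform lower bound; the upper bound and the squeezing mechanism are routine, so the proof rests on the contraction/Furstenberg input for irreducible cocycles over strongly mixing Markov shifts.
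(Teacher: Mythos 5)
The paper does not prove this lemma: it simply cites Lemma 3.1 of Bougerol~\cite{Bou}. Your proposal attempts to supply a proof and contains some useful and correct preliminary steps, but the crucial step contains a circularity that renders the argument invalid in the context of this paper.

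The preliminary framing is sound: the identity $n\,u_n(x,p)=\sum_{k=0}^{n-1}(Q_A^k h)(x,p)$ with $h(y,\hatq)=\int_\Sigma\log\norm{A(y,z)\hatq}\,K(y,dz)$ follows from conditioning on the Markov filtration, the quantities $\overline u_n$ and $\underline u_n$ are sub/super-additive by the same conditioning, and the strong-mixing estimate does yield $\overline L\le L_1(A)$. The difficulty, which you correctly identify as ``the heart of the matter,'' is the uniform lower bound $\underline L\ge L_1(A)$.

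Here is the gap. To establish this bound you ``invoke the contraction of the projective random dynamics over the strongly mixing base: the laws of the normalized iterates $\Ahat^{(n)}(\cdot)\,\hatp$ converge to $\mu_A$ uniformly in $(x,\hatp)$, so the Cesàro means $\frac1n\sum_{k<n}(Q_A^k h)(x,p)$ converge uniformly to $\langle h,\mu_A\rangle$.'' But this uniform convergence of the projective laws is precisely the quasi-compactness/strong-mixing property of the extended kernel $K_A$, which in this paper is the \emph{conclusion} of Proposition~\ref{QCS}, whose proof depends on Proposition~\ref{contr:B}, which depends on Lemma~\ref{log:Lipschitz}, which in turn is deduced from the very Lemma~\ref{uniform:conv} you are trying to prove. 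The argument is circular. Compounding this, the uniqueness of the stationary measure $\mu_A$ (implicit in your writing $\langle h,\mu_A\rangle$ as a well-defined constant and in asserting that the laws converge \emph{to $\mu_A$}) is only established later, in Corollary~\ref{uniqueness of stationary measure}, further downstream of the same chain. The Furstenberg-type almost sure statement you appeal to (``for each fixed $p$, $\frac1n\log\norm{\Ahat^{(n)}(x)p}\to L_1(A)$ for $\Pp_\mu$-a.e.\ $x$'') is a legitimate input, but it does not by itself produce either convergence of $\EE_x$ (a $\Pp_x$-, not $\Pp_\mu$-, expectation) or uniformity in $(x,p)$, and the bridge you propose is the circular one. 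The non-circular route, as in Bougerol, uses only the weak compactness of the set of $K_A$-stationary probabilities together with the Furstenberg--Kifer identity $\int h\,d\nu=L_1(A)$ valid for \emph{every} such $\nu$ under irreducibility and the gap $L_1>L_2$ — no uniqueness of $\nu$ and no uniform contraction of the projective chain are presupposed. As written, your proposal does not provide this and therefore does not constitute a proof.
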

\begin{proof} See Lemma 3.1 in~\cite{Bou}. 
\end{proof}

\begin{lemma} \label{log:Lipschitz} Given $A\in \randcocycles{\infty}{m}$ such that $(K_A,\mu_A,\xi_A)\in\Xfrk$, there exists $n\in\N$ such that for all $x\in\Sigma$ and $p\neq q$ in $\Proj$,
$$ \EE_x\left[\log \frac{\delta(\Ahat^{(n)} p, \Ahat^{(n)} q)}{\delta(p,q)} \right]\leq -1 \;. $$
\end{lemma}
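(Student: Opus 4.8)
The plan is to establish the uniform contraction of the projective action, averaged over the Markov kernel, by combining the classical Furstenberg-type positivity of the top Lyapunov exponent gap with a uniform (in the base point $x$) control on the fluctuations of $\log\|\Ahat^{(n)}p\|$.

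First I would recall the basic identity for the projective distance: for $p\neq q$ in $\Proj$ and any $M\in\GL(m,\R)$,
$$ \log\frac{\delta(Mp,Mq)}{\delta(p,q)} = \log\frac{\|Mp\wedge Mq\|}{\|Mp\|\,\|Mq\|} - \log\frac{\|p\wedge q\|}{\|p\|\,\|q\|} = \log\|\wedge_2 M\,(p\wedge q)\| - \log\|Mp\| - \log\|Mq\| + \log\|p\wedge q\|\,, $$
so that, applying this to $M=\Ahat^{(n)}$ and taking $\EE_x$, the left-hand side of the claimed inequality decomposes into three pieces: a $\wedge_2$-term whose exponential growth rate is $L_1(A)+L_2(A)$, and two copies of a term with growth rate $L_1(A)$. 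The net growth rate is therefore $L_2(A)-L_1(A)<0$ by hypothesis (3) of Theorem~\ref{Fiber:LDT}. The point of Lemma~\ref{uniform:conv} is precisely that these limits hold \emph{uniformly} in $(x,p)$: applying it to $A$ gives $\frac1n\EE_x(\log\|\Ahat^{(n)}p\|)\to L_1(A)$ uniformly in $(x,p)$, and applying it to the (totally irreducible, since we are in $\Xfrk$) exterior power cocycle $\wedge_2 A$ acting on $\mathbb{P}(\wedge_2\R^m)$ gives $\frac1n\EE_x(\log\|\wedge_2\Ahat^{(n)}(p\wedge q)\|)\to L_1(A)+L_2(A)=\Lambda_2(A)$ uniformly in $(x,p,q)$.

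Putting these together, $\frac1n\EE_x\!\left[\log\frac{\delta(\Ahat^{(n)}p,\Ahat^{(n)}q)}{\delta(p,q)}\right]$ converges to $L_2(A)-L_1(A)<0$ uniformly in $(x,p,q)$; choosing $n$ large enough that this average is below, say, $\tfrac12(L_2(A)-L_1(A))\cdot n \le -1$ — more precisely, large enough that $n\cdot\bigl(L_2(A)-L_1(A)+\text{(uniform error)}\bigr)\le -1$ — yields the claim. One small technical caveat: the decomposition above contains the bounded leftover term $\log\|p\wedge q\|\le 0$, which only helps; and one must note $\delta(p,q)\le 1$ so $\log\delta(p,q)\le 0$, ensuring no sign issues when rearranging.

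The main obstacle is verifying that Lemma~\ref{uniform:conv} genuinely applies to the exterior-power cocycle $\wedge_2 A$: one needs $(K_{\wedge_2 A},\mu_{\wedge_2 A},\xi_{\wedge_2 A})$ (or at least the data needed for the cited Lemma~3.1 of~\cite{Bou}) to satisfy the relevant irreducibility/mixing hypotheses. This follows because $A\in\irredcocycles{\infty}{m}$ is \emph{totally} irreducible, so $\wedge_2 A$ is irreducible, and $(K,\mu)$ strongly mixing is inherited by the induced system; but one should check that $\wedge_2 A$ also has a simple top Lyapunov exponent on $\mathbb{P}(\wedge_2\R^m)$, or else invoke Lemma~\ref{uniform:conv} only in the weaker averaged form that still suffices here. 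An alternative, perhaps cleaner route avoiding $\wedge_2$ altogether: use the contraction-property machinery of~\cite{Bou} directly — strong irreducibility plus the $L_1>L_2$ gap is exactly Furstenberg's condition guaranteeing that the projective cocycle is \emph{contracting on average}, which is the statement that $\EE_x\bigl[\log\frac{\delta(\Ahat^{(n)}p,\Ahat^{(n)}q)}{\delta(p,q)}\bigr]\to-\infty$ uniformly. If the excerpt's downstream use only needs some fixed $n$, I would cite the contraction statement from~\cite{Bou} and take $n$ accordingly; that sidesteps the exterior-power bookkeeping entirely and is the step I would expect to cause the least friction.
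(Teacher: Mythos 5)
Your decomposition of $\log\frac{\delta(M_np,M_nq)}{\delta(p,q)}$ and the identification of the net drift $L_2-L_1<0$ are exactly the paper's strategy, and you correctly flag where the subtlety lies. But the precise route you propose for the $\wedge_2$-term has a genuine gap, and the fix you sketch is not quite the one the paper uses.

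First, a factual slip: membership in $\Xfrk$ (as defined in \S\ref{random_fiber_ldt}) requires only that $A$ be \emph{irreducible} with $L_1(A)>L_2(A)$ --- not \emph{totally} irreducible. So ``totally irreducible, since we are in $\Xfrk$'' is not true; you cannot conclude from $(K_A,\mu_A,\xi_A)\in\Xfrk$ that $\wedge_2 A$ is irreducible. Second, and more seriously, even if $\wedge_2 A$ were irreducible, applying Lemma~\ref{uniform:conv} to it would also require $L_1(\wedge_2 A)>L_2(\wedge_2 A)$, i.e.\ $L_2(A)>L_3(A)$, which is not among the hypotheses of Theorem~\ref{Fiber:LDT}. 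So the version of the argument that tracks $\frac1n\EE_x[\log\|\wedge_2\Ahat^{(n)}(p\wedge q)\|]$ uniformly in $(x,p,q)$ via a second application of Lemma~\ref{uniform:conv} does not go through on the stated hypotheses.

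The paper circumvents this entirely by bounding $\|\wedge_2\Ahat^{(n)}(p\wedge q)\|\le\|\wedge_2\Ahat^{(n)}\|\cdot\|p\wedge q\|$ before taking expectations, so the $\wedge_2$-contribution becomes $\frac1n\EE_x[\log\|\wedge_2\Ahat^{(n)}\|]$, which is \emph{independent of $p,q$}. The uniform-in-$x$ convergence of this quantity to $L_1+L_2$ is a Furstenberg--Kesten-type statement about the operator norm that needs neither irreducibility of $\wedge_2 A$ nor the gap $L_2>L_3$ (it follows from subadditivity of $\log\|\wedge_2\Ahat^{(n)}\|$ together with strong mixing). Lemma~\ref{uniform:conv} is then invoked only for the two terms $-\frac1n\EE_x[\log\|\Ahat^{(n)}p\|]$ and $-\frac1n\EE_x[\log\|\Ahat^{(n)}q\|]$, for which the hypotheses $A$ irreducible and $L_1>L_2$ are exactly what is available. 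This is precisely the ``weaker averaged form'' you gesture at in your last paragraph; the point is that passing from the bivector norm to the operator norm makes that weaker form sufficient without importing any extra hypothesis on $\wedge_2 A$.
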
 

\begin{proof}  We write  $M_n=\Ahat^{(n)}$.
Given $x\in\Sigma$ and $p\neq q$ in $\Proj$,
\begin{align*}
& \frac{1}{n}\,\EE_x\left[\log \frac{\delta(M_{n} p, M_{n} q)}{\delta(p,q)} \right] \leq \frac{1}{n}\,\EE_x\left[\log \frac{\norm{ (M_{n} p)\wedge (M_{n} q)}}{\norm{M_{n} p}\,\norm{ M_{n} q}} \,\frac
{\norm{p}\,\norm{q}}{\norm{p \wedge q}} \right]\\
& \leq \frac{1}{n}\,\EE_x\left[\log \frac{\norm{ (M_{n} p)\wedge (M_{n} q)}}{\norm{p \wedge q}}\,\frac
{\norm{p}}{\norm{M_{n} p}}\,\frac
{\norm{q}}{\norm{M_{n} q}}  \right]\\
& \leq \frac{1}{n}\,\EE_x\left[ \log \norm{\wedge_2 \Ahat^{(n)}}\right]
- \frac{1}{n}\,\EE_x\left[ \log  \norm{\Ahat^{(n)} p}  \right]
- \frac{1}{n}\,\EE_x\left[ \log  \norm{\Ahat^{(n)} q}  \right]\;,
\end{align*}
and the right hand side converges to
$L_1 +L_2 - 2\, L_1 = L_2 -  L_1 <0$.
By Lemma~\ref{uniform:conv}, we have
$$ \limsup_{n\to+\infty} \sup_{x\in\Sigma,p\neq q} 
\frac{1}{n}\,\EE_x\left[\log \frac{\delta(M_{n} p, M_{n} q)}{\delta(p,q)} \right] \leq L_2 -  L_1 <0\;. $$
Hence taking $n$ large enough such that
$ n\,(L_2 -  L_1) <-1$ the Lemma follows.
\end{proof}

\begin{proposition}  \label{contr:B}
Given $A\in \randcocycles{\infty}{m}$ such that $(K_A,\mu_A,\xi_A)\in\Xfrk$, there exists  a neighborhood $\V$ of $A$ in $\randcocycles{\infty}{m}$,  and there are constants 
$0<\alpha_1<\frac{\alpha_0}{2}<\alpha_0$, $C>0$ and $0<\sigma<1$  such that
$$ v_\alpha( Q_B^n f ) \leq C\,\sigma^n\, v_\alpha(f)\;,$$
 for all $B\in\V$, $\alpha\in[\alpha_1,\alpha_0]$, 
  $n\in\N$ and $f\in\Hscr_\alpha(\Sigma\times\Proj)$.
\end{proposition}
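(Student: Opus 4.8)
The plan is to reduce the statement, via Lemma~\ref{Va:QKa}, to a uniform contraction estimate $\xim{n}(B)\le C\,\sigma^n$ for $B$ near $A$, and to extract that estimate from the strict negativity in Lemma~\ref{log:Lipschitz} together with the submultiplicativity of Lemma~\ref{xi:sub-multiplicative}. First I would apply Lemma~\ref{log:Lipschitz} to $A$ to fix an integer $N$ with $\EE_x[\log(\delta(\Ahat^{(N)}p,\Ahat^{(N)}q)/\delta(p,q))]\le-1$ for all $x\in\Sigma$ and $p\neq q$. Next I would pass to a neighbourhood: by Lemma~\ref{semigroup}(b), $B^{(N)}\to\Ahat^{(N)}$ uniformly on $X$ as $B\to A$ in $d_\infty$, while $\max\{\norm{B}_\infty,\norm{B^{-1}}_\infty\}$ stays bounded, say by some $R\ge1$. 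Writing
\[
 \log\frac{\delta(Mp,Mq)}{\delta(p,q)}=\log\frac{\norm{(\wedge_2 M)(p\wedge q)}}{\norm{p\wedge q}}-\log\frac{\norm{Mp}}{\norm p}-\log\frac{\norm{Mq}}{\norm q}\,,
\]
and using that $M\mapsto\log\norm{M\hatv}$ and $M\mapsto\log(\norm{(\wedge_2 M)(p\wedge q)}/\norm{p\wedge q})$ are Lipschitz in $M$ on $\{\norm{M}\le R^N,\ \norm{M^{-1}}\le R^N\}$ with a constant independent of the unit vectors and bivectors involved, the integrand above depends continuously on $B$, uniformly in $(x,p,q)$ and in the integration variable. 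Hence there is a neighbourhood $\V$ of $A$ in $\randcocycles{\infty}{m}$, on which $\max\{\norm{B}_\infty,\norm{B^{-1}}_\infty\}\le R$, such that $\EE_x[\log(\delta(B^{(N)}p,B^{(N)}q)/\delta(p,q))]\le-\tfrac12$ for all $B\in\V$, $x\in\Sigma$, $p\neq q$.

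Next I would run a Taylor estimate to fix the range $[\alpha_1,\alpha_0]$. Put $Y_B:=\log(\delta(B^{(N)}p,B^{(N)}q)/\delta(p,q))$. The decomposition above together with $\norm{\wedge_2 M^{\pm1}}\le R^{2N}$ gives the pointwise bound $\abs{Y_B}\le 4N\log R=:C_N$, uniformly over $B\in\V$, $x$, $p\neq q$. From $e^{\alpha t}=1+\alpha t+\tfrac12(\alpha t)^2 e^{\alpha\theta t}$ with $\theta\in[0,1]$ we get, for $\alpha>0$,
\[
 \EE_x[e^{\alpha Y_B}]\le 1+\alpha\,\EE_x[Y_B]+\tfrac{\alpha^2}{2}\,C_N^2\,e^{\alpha C_N}\le 1-\tfrac{\alpha}{2}+\tfrac{\alpha^2}{2}\,C_N^2\,e^{\alpha C_N}\,.
\]
Now choose $\alpha_0\in(0,\tfrac{1}{4N}]$ small enough that $\tfrac{\alpha}{2}C_N^2 e^{\alpha C_N}\le\tfrac14$ for $0<\alpha\le\alpha_0$; then for every $\alpha\in(0,\alpha_0]$ one has $\xim{N}(B)=\sup_{x,p\neq q}\EE_x[e^{\alpha Y_B}]\le 1-\tfrac{\alpha}{4}$. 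Fix any $\alpha_1\in(0,\alpha_0/2)$ and set $\bar\rho:=1-\alpha_1/4\in(0,1)$, so that $\xim{N}(B)\le\bar\rho$ for all $B\in\V$ and all $\alpha\in[\alpha_1,\alpha_0]$.

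Finally I would use submultiplicativity. For $0\le r<N$, $B\in\V$ and $\alpha\le\tfrac1{4N}\le\tfrac1{4r}$, Lemma~\ref{xi:finite} gives $\xim{r}(B)\le\max\{\norm{B}_\infty,\norm{B^{-1}}_\infty\}\le R$ (and $\xim{0}(B)=1\le R$). Writing $n=kN+r$ with $0\le r<N$, Lemma~\ref{xi:sub-multiplicative} yields
\[
 \xim{n}(B)\le\xim{N}(B)^k\,\xim{r}(B)\le\bar\rho^{\,k}\,R\le\frac{R}{\bar\rho}\,(\bar\rho^{1/N})^{n}\,.
\]
Setting $\sigma:=\bar\rho^{1/N}\in(0,1)$ and $C:=R/\bar\rho$, Lemma~\ref{Va:QKa} gives $v_\alpha(Q_B^n f)\le\xim{n}(B)\,v_\alpha(f)\le C\,\sigma^n\,v_\alpha(f)$ for all $B\in\V$, $\alpha\in[\alpha_1,\alpha_0]$, $n\in\N$ and $f\in\Hscr_\alpha(\Sigma\times\Proj)$, which is the assertion (note $0<\alpha_1<\alpha_0/2<\alpha_0$ by construction).

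The only genuinely delicate step is the passage to a neighbourhood: one must verify that the expectation appearing in Lemma~\ref{log:Lipschitz} depends continuously on the cocycle, uniformly in $(x,p,q)$. The apparent danger is that $\delta(p,q)$ can be arbitrarily small, but this is harmless, since the ratio $\delta(B^{(N)}p,B^{(N)}q)/\delta(p,q)$ only involves $\log\norm{M^{\pm1}\hatv}$-type quantities for $M=B^{(N)}$ and for $\wedge_2 M$, all of which are uniformly Lipschitz in $M$ once $\norm{M^{\pm1}}$ is controlled — and on a small enough neighbourhood of $A$ it is controlled, by Lemma~\ref{semigroup}. Everything else is a routine Taylor bound and an application of the submultiplicativity already established.
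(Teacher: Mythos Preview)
Your argument is correct and follows the same overall architecture as the paper: Lemma~\ref{log:Lipschitz} to secure a negative log-expectation at scale $N$, a second-order Taylor bound to turn this into $\xim{N}<1$ for small $\alpha$, Lemma~\ref{xi:finite} to control the short tails, submultiplicativity (Lemma~\ref{xi:sub-multiplicative}) to propagate, and Lemma~\ref{Va:QKa} to finish.

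The one genuine difference is in how uniformity over a neighbourhood of $A$ is obtained. The paper first carries out the Taylor step at $A$ alone to get $\ximo{n_0}(A)<1$, and only then passes to nearby $B$ by invoking an explicit modulus of continuity for $B\mapsto\xim{n_0}(B)$ (this uses an external estimate, Lemma~3.27 of~\cite{LEbook-chap2}, giving a polynomial bound $|\xim{n}(A)-\xim{n}(B)|\lesssim \alpha\,n\,C^{12n-1}\,\norm{A-B}_\infty$). You instead perturb one step earlier: you extend the \emph{logarithmic} bound of Lemma~\ref{log:Lipschitz} from $A$ to all $B\in\V$ via a direct Lipschitz estimate on the integrand $\log(\delta(Mp,Mq)/\delta(p,q))$, and then run the Taylor step uniformly in $B$. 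Your route is more self-contained (no appeal to the external Lemma~3.27) and arguably cleaner, since the logarithmic quantity decomposes additively and each summand is manifestly Lipschitz in $M=B^{(N)}$ on the bounded set $\{\norm{M^{\pm1}}\le R^N\}$. The paper's route, on the other hand, yields an explicit quantitative modulus of continuity for $\xim{n}(\cdot)$, which may be of independent use. Either way the conclusion and the constants one can extract are essentially the same.
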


\begin{proof}  We begin deriving  a modulus of continuity for 
$B\mapsto \xim{n}(B)$.

Fix a neighborhood $\V$ of $A$ in $\randcocycles{\infty}{m}$ such that for all $B\in \V$, $\norm{B}_\infty\leq C$ and
$\norm{B^{-1}}_\infty\leq C$.
By  Lemma~\ref{semigroup}\,  $ \norm{ B^ {(\pm n)} }_\infty  \leq C^n $   for all $B\in\V$ and $n\in\N$.
Thus, by~\cite[Lemma 3.27]{LEbook-chap2}  and Lemma~\ref{semigroup} (b),
there exists a polynomial expression $C(g_1,g_2)$,  with degree $<11$ in the variables $\norm{g_1}$, $\norm{g_2}$, $\norm{g_1^{-1}}$ and $\norm{g_2^{-1}}$, such that
\begin{align*} 
& \abs{ \xim{n}(A) - \xim{n}(B) }  \leq 
\sup_{x\in\Sigma, p\neq q} \EE_x\left[ \left| \left(\frac{\delta(A^{(n)} p, A^{(n)} q) }{\delta(p,q)} \right)^\alpha -
\left(\frac{\delta(B^{(n)} p, B^{(n)} q) }{\delta(p,q)} \right)^\alpha \right| \right] \\
&\leq \alpha \, C(A^{(n)},B^{(n)})\, \norm{A^{(n)} - B^{(n)}}_\infty \leq \alpha\,  C^{11\,n} \, \norm{A^{(n)} - B^{(n)}}_\infty 
\leq \alpha\, n \, C^{12n -1}\, \norm{A-B }_\infty   \;.
\end{align*}
Let  $M_n=\Ahat^{(n)}$.
We claim that  for some   $n_0\in\N$ and $0<\alpha_0\leq 1$  small enough,
   $\ximo{n_0}(A)<1$.
We will make use the following inequality
$$ e^ x \leq 1+x+\frac{x^ 2}{2}\,e^{\abs{x}} \;. $$
Choose ${n_0}\in\N$ as given by Lemma~\ref{log:Lipschitz}.
For all $x\in\Sigma$, $p \neq q$ in $\Proj$,
\begin{align*}
& \EE_x\left[ \left(\frac{\delta(M_{n_0} p, M_{n_0} q)}{\delta(p,q)} \right)^\alpha \right] 
= \EE_x\left[\exp  \left(\alpha\,\log \frac{\delta(M_{n_0} p, M_{n_0} q)}{\delta(p,q)} \right) \right] \\
& \leq \EE_x\left[ 1+\alpha\,\log \frac{\delta(M_{n_0} p, M_{n_0} q)}{\delta(p,q)} + \frac{\alpha^2}{2}\, \log^2 \frac{\delta(M_{n_0} p, M_{n_0} q)}{\delta(p,q)}\, \left(\frac{\delta(M_{n_0} p, M_{n_0} q)}{\delta(p,q)}\right)^\alpha \right]\\
&\leq 1-\alpha + \frac{\alpha^2}{2}\,\EE_x\left[16\,\ell(M_{n_0})^2\,\exp(  \alpha\,\ell(M_{n_0}))\right] 
\leq 1-\alpha + O(\alpha^2)  \;.
\end{align*}
The last inequality follows because
$\EE_x\left[16\,\ell(M_{n_0})^2\,\exp(  \alpha\,\ell(M_{n_0}))\right]$ is finite and uniformly bounded in $x$  and $0<\alpha\leq 1$ by the constant
$16\,n_0^2\,(\log C)^2\, C^{n_0\,\alpha}$.

 Taking $\alpha>0$ sufficiently small the right-hand-side above is less than $1$, which shows that $\xim{n_0}(A)<1$.
Hence, we can choose $0<\alpha_1<\frac{\alpha_0}{2}$ and $0<\rho<1$ such that for all
$\alpha_1\leq \alpha\leq \alpha_0$,  $\xim{n_0}(A)\leq \rho$.

Next, we extend this inequality to all cocycles $B\in\V$. 

Pick  $\rho'\in ]\rho, 1[$ and choose $\delta>0$ such that $\alpha_0\, {n_0}\, C^{12 {n_0}-1}\,\delta < \rho'-\rho$.
Make the neighborhood $\V$ small enough so that $\norm{A-B}_\infty<\delta$ for all $B\in\V$.
Then, using the modulus of continuity for $\xim{n}(B)$,
 for all $B\in\V$ and $\alpha_1\leq \alpha\leq \alpha_0$,
$$ \abs{ \xim{n_0}(A) -  \xim{n_0}(B) }<  \rho'-\rho  \;,$$
which implies
$$ \xim{n_0}(B)\leq \xim{n_0}(A) + \abs{ \xim{n_0}(A) -  \xim{n_0}(B) }  < \rho' \;.$$
By Lemma~\ref{xi:finite}, 
$\xim{j}(B)\leq C$ for all $B\in\V$, $0<\alpha\leq \frac{1}{4\,n_0}$ and $0\leq j\leq {n_0}$.
Shrinking if necessary the constants $\alpha_1$ and $\alpha_0$ above, we may assume that $\alpha_0\leq \frac{1}{4\,n_0}$. 
Thus, because the sequence $\{\xim{n}(B)\}_{n\geq 0}$ is sub-multiplicative, 
letting $\sigma=(\rho')^{1/{n_0}}$ we have $\xim{n}(B)\leq C\,\sigma^n$
for all $B\in\V$, $n\in\N$ and $\alpha_1\leq \alpha\leq \alpha_0$. The proposition follows then from the inequality proven in Lemma~\ref{Va:QKa}.
\end{proof}
 

Next proposition  implies (A2).

\begin{proposition} \label{QCS}
Given $A\in \randcocycles{\infty}{m}$ such that $(K_A,\mu_A,\xi_A)\in\Xfrk$,  there exist a neighborhood $\V$  of $A$ in $\randcocycles{\infty}{m}$, a range  $0<\alpha_1<\frac{\alpha_0}{2} <\alpha_0\leq 1$  and there are  constants $C>0$ and $0<\sigma<1$ such that    
for all  $B\in\V$, $\alpha\in [\alpha_1,\alpha_0]$ and
 $f\in \Hscr_\alpha(\Sigma\times\Proj)$,
$$ \norm{ Q_{B}^n f -\langle f, \mu_B \rangle\, \one }_\alpha \leq C\, \sigma^ n \norm{f}_\alpha\;. $$
\end{proposition}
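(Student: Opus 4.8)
The plan is to bootstrap the uniform contraction of the fiber seminorm supplied by Proposition~\ref{contr:B} with the strong mixing of the base system $(K,\mu)$, which controls the part of a function that is constant along the projective fibers. First I fix once and for all the neighbourhood $\V$, the range $[\alpha_1,\alpha_0]$ and the constants $C>0$, $0<\sigma<1$ given by Proposition~\ref{contr:B}, together with constants $C_0>0$, $0<\rho<1$ such that $\norm{Q_K^n h-\langle h,\mu\rangle\one}_\infty\leq C_0\,\rho^n\,\norm{h}_\infty$ for all $h\in L^\infty(\Sigma)$ (these exist because $(K,\mu)$ is strongly mixing), and recall from Corollary~\ref{uniqueness of stationary measure} that each $B\in\V$ carries a $K_B$-stationary measure $\mu_B$. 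Given $B\in\V$, $\alpha\in[\alpha_1,\alpha_0]$ and $f\in\Hscr_\alpha(\Sigma\times\Proj)$, I write $g_n:=Q_B^n f\in\Hscr_\alpha(\Sigma\times\Proj)$, which in particular does not depend on the first coordinate, so I regard it as a function of $(y,\hat p)$.

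The first step is a splitting $g_n=h_n+r_n$ adapted to the structure of $Q_B$, where $h_n(y):=g_n(y,\hat p_0)$ for a fixed reference point $\hat p_0\in\Proj$ and $r_n:=g_n-h_n$. Since $\delta(\hat p,\hat q)\leq 1$, Proposition~\ref{contr:B} gives $\norm{r_n}_\infty\leq v_\alpha(g_n)\leq C\,\sigma^n\,v_\alpha(f)\leq C\,\sigma^n\,\norm{f}_\alpha$, while $\norm{h_n}_\infty\leq\norm{f}_\infty$. The key observation is that $Q_B$ sends a function constant along the projective fibers to another such function and acts on it exactly as the base Markov operator: if $h\in L^\infty(\Sigma)$ is viewed as a function of $(y,\hat p)$ not depending on $\hat p$, then $(Q_B h)(y,\hat p)=\int_\Sigma h(z)\,K(y,dz)=(Q_K h)(y)$, with no dependence on $B$ at all. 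Iterating, $Q_B^n h_n=Q_K^n h_n$; applying $Q_B^n$ to $g_n$ and using that $Q_B$ is a Markov operator (hence a $\norm{\cdot}_\infty$-contraction fixing $\one$) yields
$$ g_{2n}=Q_K^n h_n+Q_B^n r_n\,,\qquad \norm{g_{2n}-\langle h_n,\mu\rangle\one}_\infty\leq C_0\,\rho^n\,\norm{f}_\alpha+C\,\sigma^n\,\norm{f}_\alpha\,. $$

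The second step identifies the limiting constant. Since $\mu_B$ is $K_B$-stationary, $\langle g_{2n},\mu_B\rangle=\langle f,\mu_B\rangle$ for every $n$, so $\abs{\langle h_n,\mu\rangle-\langle f,\mu_B\rangle}=\abs{\langle \langle h_n,\mu\rangle\one-g_{2n},\mu_B\rangle}\leq\norm{g_{2n}-\langle h_n,\mu\rangle\one}_\infty$, whence $\norm{g_{2n}-\langle f,\mu_B\rangle\one}_\infty\leq 2\,(C_0\,\rho^n+C\,\sigma^n)\,\norm{f}_\alpha$. For odd indices $g_{2n+1}-\langle f,\mu_B\rangle\one=Q_B(g_{2n}-\langle f,\mu_B\rangle\one)$ and $Q_B$ contracts $\norm{\cdot}_\infty$, so the same estimate holds with $2n$ replaced by $2n+1$. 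Combining this with $v_\alpha(g_m-\langle f,\mu_B\rangle\one)=v_\alpha(g_m)\leq C\,\sigma^m\,\norm{f}_\alpha$ and setting $\sigma':=\max\{\sqrt\sigma,\sqrt\rho\}<1$ gives $\norm{g_m-\langle f,\mu_B\rangle\one}_\alpha\leq C'\,(\sigma')^m\,\norm{f}_\alpha$ for a suitable $C'>0$, with $C'$ and $\sigma'$ depending only on $C,\sigma,C_0,\rho$, hence uniform over $B\in\V$ and $\alpha\in[\alpha_1,\alpha_0]$; relabelling $C',\sigma'$ as $C,\sigma$ gives the statement.

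The only substantial difficulty is already packaged into Proposition~\ref{contr:B}, namely the uniform-in-$B$ and uniform-in-$\alpha$ contraction of the fiber seminorm; everything afterwards is soft, resting on the elementary fact that $Q_B$ restricts to $Q_K$ on fiber-constant functions, which lets the base mixing absorb all dependence on the base point. The one routine point I will pass over is the measurability of $y\mapsto g_n(y,\hat p_0)$: should one prefer, one may instead take $h_n(y):=\int_\Proj g_n(y,\hat p)\,d\mathrm{vol}(\hat p)$ for a fixed reference probability $\mathrm{vol}$ on $\Proj$, which enjoys the same bound $\norm{g_n-h_n}_\infty\leq v_\alpha(g_n)$.
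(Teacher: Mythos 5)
Your proof is correct, and it takes a genuinely different route from the paper's. The paper (following Bougerol) introduces four auxiliary operator families $T^{(0)}_{B,n}$, $T^{(1)}_{B,n,m}$, $T^{(2)}_{B,m}$, $T^{(3)}_B$, proves three comparison estimates between consecutive pairs, sets $n=2m$ to get $\norm{Q_B^{2m}f - T^{(2)}_{B,\ell}f}_\infty \leq 3C\sigma^m\norm{f}_\alpha$, and then identifies the limiting constant by a compactness/sublimit argument: $\{T^{(2)}_{B,\ell}f\}$ is relatively compact in $\mathrm{C}(\Proj)$, any sublimit $g$ has $v_\alpha(g)=0$ and the correct $\nu_B$-average, hence $g=\langle f,\nu_B\rangle\,\one$. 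You instead split $g_n=Q_B^n f$ at a reference point $\hat p_0$ into a fiber-constant part $h_n$ and a fiber-oscillatory remainder $r_n$, exploit the clean algebraic fact that $Q_B$ coincides with the base operator $Q_K$ on fiber-constant functions (so base strong mixing kills $Q_B^n h_n-\langle h_n,\mu\rangle\one$ while Proposition~\ref{contr:B} kills $Q_B^n r_n$), and then identify the limit constant \emph{directly} as $\langle f,\mu_B\rangle$ via the duality $\langle Q_B^{2n}f,\mu_B\rangle=\langle f,\mu_B\rangle$, bypassing compactness entirely. This is shorter and arguably cleaner than the paper's argument; it also makes manifest that the estimate holds with the same constant for \emph{any} $K_B$-stationary measure, from which the paper's Corollary~\ref{uniqueness of stationary measure} falls out just as it does there. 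One cosmetic point: you cite Corollary~\ref{uniqueness of stationary measure} for the existence of $\mu_B$, which is anachronistic since in the paper that corollary is deduced from the present proposition; but you only need existence, which is automatic by Krylov--Bogolyubov on the compact space $\Sigma\times\Sigma\times\Proj$ (and the paper itself works with ``any $K_B$-stationary measure $\nu_B$''), so this is not a genuine gap.
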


\newcommand{\CC}{{\rm C}}

\begin{proof} The argument below is an adaptation of the proof of Theorem 3.7 in~\cite{Bou}.

Take the neighbourhood $\V$, and the constants $\alpha_0>0$, $C>0$ and $0<\sigma<1$ 
given by Proposition~\ref{contr:B}.
Enlarging the constants $C>0$ and  $0<\sigma<1$ 
we can assume that the conditions of definition~\ref{Strong:mixing}
are also satisfied with $\rho=\sigma$. By Lemma~\ref{Va:QKa}, given $B\in \V$ and any $K_B$-stationary measure $\nu_B$,
$$
v_\alpha(  Q_{B}^n f -\langle f, \nu_B\rangle\, \one )  = v_\alpha( Q_{B}^n f ) 
 = v_\alpha(f) \, \xim{n}(B) \leq C\,\sigma^ n\,\norm{f}_\alpha \;.
$$
Hence it is now enough to prove that
$$ \norm{ Q_{B}^n f -\langle f, \nu_B \rangle\, \one }_\infty \leq C\, \sigma^ n \norm{f}_\alpha\;. $$

We define four families of transformations
$$T^{(i)}_{B,n,m}:L^\infty(\Sigma\times\Proj)\to L^\infty(\Sigma\times\Proj)\quad  i=0,1,2, 3\;, $$
depending on $B\in\V$, and $n\geq m$, $n,m\in\N$, which act continuously on the scale of Banach spaces
$\Hscr_\alpha(\Sigma\times\Proj)$ with $0<\alpha\leq \alpha_0$.

\blob \; $(T^{(0)}_{B,n} f)(x,p) := (Q_{B}^ n f)(x,p) = \EE_x\left[ f(e_n, B^{(n)}\,p ) \right] $.

\blob \; $(T^{(1)}_{B,n,m} f)(x,p) := \EE_x\left[ f(e_n, (B^{(m)}\circ \transl^{n-m})\,p ) \right] $.

\blob \; $(T^{(2)}_{B,m} f)(x,p) :=  \EE_\mu\left[ f(e_m, B^{(m)}\,p ) \right] $.\\
$T^{(2)}_{B,m}$ maps $\Hscr_\alpha(\Sigma\times\Proj)$ onto the space $\Hscr_\alpha(\Proj)$
of $\alpha$-H\"older continuous functions, constant in $x$.
In particular $T^{(2)}_{B,m}:\Hscr_\alpha(\Sigma\times\Proj)\to \CC(\Proj)$
is a compact  transformation.

\blob \; $(T^{(3)}_{B} f)(x,p) :=  \int f\, d\nu_B $, where $\nu_B$ is any $K_B$-stationary measure.\\
$T^{(3)}_{B}$ maps $L^ \infty(\Sigma\times\Proj)$ onto the space of constant functions.
In particular the linear transformation $T^{(3)}_{B}:\Hscr_\alpha(\Sigma\times\Proj)\to \CC(\Proj)$ has rank $1$.

We claim that for all $B\in\V$ and all $f\in\Hscr_\alpha(\Sigma\times\Proj)$ with
$0<\alpha\leq \alpha_0$, 
 for all $n,m\in\N$ with $n\geq m$, and all $(x,q)\in\Sigma\times\Proj$,

\begin{enumerate}
\item[(1)] \;
$\displaystyle \abs{ (T^{(0)}_{B,n} f)(x,q) -  (T^{(1)}_{B,n,m} f)(x,q)}  \leq C\,\sigma^m \norm{f}_\alpha$.

\item[(2)]  \;
$\displaystyle \abs{ (T^{(1)}_{B,n,m} f)(x,q) -  (T^{(2)}_{B,m} f)(q)}  \leq C\,\sigma^{n-m} \norm{f}_\alpha$.

\item[(3)] \;
$\displaystyle \abs{ (T^{(2)}_{B,m} f)(q) -  (T^{(2)}_{B,n} f)(q)}  \leq C\,\sigma^m \norm{f}_\alpha$.
\end{enumerate}
 
We will conclude the proposition  before proving these three claims.
Setting $n=2m$ in (1) and (2), and $n=\ell$ in (3),
 for all $\ell \geq m$,   $B\in\V$ and  $f\in\Hscr_\alpha(\Sigma\times\Proj)$ with
$0<\alpha\leq \alpha_0$,
\begin{equation}\label{QT2}
 \norm{ Q_{B}^{2m} f -  T^{(2)}_{B,\ell} f}_\infty \leq 3\,C\,\sigma^{m} \norm{f}_\alpha\;.
\end{equation}
The sequence $\{ T^{(2)}_{B,\ell} f\}_{\ell\geq 0}$ is relatively compact in $\CC(\Proj)$.
Hence the set $S_f$ of its sublimits in $\left(\CC(\Proj), \norm{\cdot}_\infty\right)$
is non-empty. Take any $g\in S_f$ and any $K_B$-stationary probability measure $\nu_B$.
We claim that $g=\langle f, \nu_B\rangle\,\one$.

From~ \eqref{QT2} we have for all $m\in\N$,
$$ \norm{ Q_{B}^{2m} f -  g}_\infty \leq 3\,C\,\sigma^m \norm{f}_\alpha\;.$$
On the other hand, since $v_\alpha(Q_{B}^{2m} f)\leq C\,\sigma^{2m}\,\norm{f}_\alpha$, we get $v_\alpha(g)=0$,
which implies that $g$ is constant. But  
$\langle Q_{B}^{2m} f , \nu_B \rangle = \langle  f , (Q_{B}^{2m})^ \ast \nu_B \rangle 
= \langle  f ,  \nu_B \rangle$  implies that
$\langle g, \nu_B \rangle = \langle  f ,  \nu_B \rangle$. Therefore  $g=\langle  f ,  \nu_B \rangle\,\one$, and also
$$ \norm{ Q_{B}^{2m} f -  \langle  f ,  \nu_B \rangle\,\one}_\infty \leq 3\,C\,\sigma^m \norm{f}_\alpha\qquad \forall\, m\in\N\;.$$
This concludes the proof.

To finish we still have to prove the three claims:

\noindent
Claim (1): Denote by $\Fcal_n$ the sub $\sigma$-field  generated by
the random variables $e_1,\ldots, e_n$. Note that for any random variable $f:X\to \C$
$$ \EE_x(f)=\EE_x \left\{ \EE_{e_n}(f\vert \Fcal_n) \right\} \;. $$
Then, using this fact we have,
\begin{align*}
& \abs{(T^{(0)}_{B,n} f - T^{(1)}_{B,n,m} f)(x,q)}  =\abs{ \EE_x\left[f(e_n,B^{(n)} q)  -  f(e_n, (B^{(m)}\circ \transl^{n-m})\,q )\right] }\\
&\qquad \leq \EE_x\left[\abs{ f(e_n,(B^{(m)}\circ\transl^{n-m}) B^{(n-m)}  q) - f(e_n, (B^{(m)}\circ \transl^{n-m})\,q ) } \right] \\
&\qquad \leq \norm{f}_\alpha \,\EE_x\left[ \delta\left( 
(B^{(m)}\circ\transl^{n-m}) B^{(n-m)} \, q, 
(B^{(m)}\circ \transl^{n-m})\,q
 \right)^ \alpha \right] \\
 &\qquad \leq \norm{f}_\alpha \,\EE_x\left\{ 
 \EE_{e_{n-m}}\left[ \delta\left( 
(B^{(m)}\circ\transl^{n-m}) B^{(n-m)} \, q, 
(B^{(m)}\circ \transl^{n-m})\,q
 \right)^ \alpha \vert \Fcal_{n-m}\right] \right\} \\
 &\qquad \leq \norm{f}_\alpha \, \sup_{x,p,q}  \EE_x\left[  \delta\left( 
B^{(m)}  p, 
B^{(m)} q
 \right)^\alpha \right] \\
 &\qquad \leq \norm{f}_\alpha \, \sup_{x,p\neq q}  \EE_x\left[ \left( \frac{ \delta( 
B^{(m)}  p, 
B^{(m)} q ) }{\delta(p,q)} \right)^\alpha \right] = \norm{f}_\alpha \, \xim{m}(B)\leq
\norm{f}_\alpha \, C\,\sigma^ m\;.
\end{align*}

\noindent
Claim (2): Defining $\varphi_{m,q}(x):=\EE_x\left[ f(e_m, B^{(m)} q) \right]$,
because $(K,\mu)$ is strongly mixing on $L^ \infty(\Sigma)$ we have
\begin{align*}
& \abs{ (T^{(1)}_{B,n,m} f  -  T^{(2)}_{B,m} f)(x, q)}   = 
\abs{ \EE_x\left[ f(e_n, (B^{(m)}\circ \transl^{n-m})\,q ) \right] - 
\EE_\mu\left[ f(e_m, B^{(m)}\,q ) \right] } \\
& \qquad = \abs{ \EE_x\left\{ \EE_{e_{n-m}}\left[ f(e_n, (B^{(m)}\circ \transl^{n-m})\,q )\right] \right\} - 
\EE_\mu\left[ f(e_m, B^{(m)}\,q ) \right] } \\
& \qquad = \abs{ \EE_x\left[ \varphi_{m,q}(e_{n-m}) \right] - 
\int  \varphi_{m,q} \,d\mu   }  = \abs{ Q_K^{n-m} \varphi_{m,q} -\int  \varphi_{m,q} \,d\mu \,\one}
\leq C\,\sigma^ {n-m}\;.
\end{align*}

\noindent
Claim (3): Because $\mu$ is $K$-stationary,
\begin{align*}
& \abs{ (T^{(2)}_{B,m} f)(q) -  (T^{(2)}_{B,n} f)(q)}  = 
\abs{ \EE_\mu\left[ f(e_m, B^{(m)}\,q ) \right] - \EE_\mu\left[ f(e_n, B^{(n)}\,q ) \right] }\\
&\qquad = 
\abs{ \EE_\mu\left[ f(e_m, B^{(m)}\,q ) \right] - \EE_\mu\left[ f(e_m, B^{(m)}\,B^{(n-m)}\, q ) \right] }\\
&\qquad \leq 
\EE_\mu\left[ \abs{ f(e_m, B^{(m)}\,q )  -  f(e_m, B^{(m)}\,B^{(n-m)}\, q )} \right]  \\
&\qquad \leq 
\norm{f}_\alpha\, \EE_\mu\left[ \delta( B^{(m)}\,q, B^{(m)}\,B^{(n-m)}\, q )^\alpha \right]  \\
&\qquad \leq 
\norm{f}_\alpha\, \EE_\mu\left\{ 
\EE_{e_{n-m}}\left[
\delta( B^{(m)}\,q, B^{(m)}\,B^{(n-m)}\, q )^\alpha \right] \right\}  \\
&\qquad \leq 
\norm{f}_\alpha\, \sup_{x, p,q}  
\EE_{x}\left[
\delta( B^{(m)}\,q, B^{(m)}\,p )^\alpha \right]   \\
 &\qquad \leq \norm{f}_\alpha \, \sup_{x,p\neq q}  \EE_x\left[ \left( \frac{ \delta( 
B^{(m)}  q, 
B^{(m)} p ) }{\delta(q,p)} \right)^\alpha \right] = \norm{f}_\alpha \, \xim{m}(B)\leq
\norm{f}_\alpha \, C\,\sigma^ m\;.
\end{align*}
\end{proof}

\begin{corollary}
\label{uniqueness of stationary measure}
Given $A\in \randcocycles{\infty}{m}$ such that $(K_A,\mu_A,\xi_A)\in\Xfrk$, the kernel $K_A$ on the product space $\Sigma\times\Sigma\times\Proj$
has a unique  stationary measure.
\end{corollary}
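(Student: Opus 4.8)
The plan is to read off both existence and uniqueness from the contraction behind Proposition~\ref{QCS}. The crucial point is that the proof of that proposition shows, \emph{without} presupposing anything about stationary measures, that for every $f\in\Hscr_{\alpha_0}(\Sigma\times\Proj)$ the functions $Q_A^{2m}f$ converge uniformly as $m\to\infty$ to a constant, say $c(f)\,\one$: this is the step where $v_\alpha(Q_A^{2m}f)\leq C\,\sigma^{2m}\,\norm{f}_{\alpha_0}\to 0$ forces the uniform limit to be constant on projective fibres, while~\eqref{QT2} (whose right-hand side $T^{(2)}_{B,\ell}f$ is independent of the base point) forces that limit to be independent of the base point as well. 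Moreover that same proof identifies $c(f)=\langle f,\nu\rangle$ whenever $\nu$ is a $K_A$-stationary probability. I will also use that the linear span of the products $u\otimes v$, with $u\in C(\Sigma)$ and $v$ an $\alpha_0$-H\"older function on $\Proj$, is a point-separating unital subalgebra of $C(\Sigma\times\Proj)$, hence dense by Stone--Weierstrass, and that each such product belongs to $\Hscr_{\alpha_0}(\Sigma\times\Proj)$.

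\emph{Uniqueness.} Let $\nu,\nu'$ be $K_A$-stationary probabilities on $\Sigma\times\Sigma\times\Proj$. For every $f\in\Hscr_{\alpha_0}(\Sigma\times\Proj)$ we have $\langle f,\nu\rangle\,\one=\lim_m Q_A^{2m}f=\langle f,\nu'\rangle\,\one$, so $\langle f,\nu\rangle=\langle f,\nu'\rangle$; by the density recalled above, the marginals $\bar\nu,\bar\nu'$ of $\nu,\nu'$ on the last two coordinates $\Sigma\times\Proj$ coincide. On the other hand the defining formula~\eqref{KhatA:2} shows that $\nu K_A$ depends on $\nu$ only through $\bar\nu$, since for $g\in C(\Sigma\times\Sigma\times\Proj)$,
\[ (\nu K_A)(g)=\int\Bigl[\int g(y,z,A(y,z)p)\,K(y,dz)\Bigr]\,d\bar\nu(y,p)\;. \]
Hence $\nu=\nu K_A=\nu' K_A=\nu'$.

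\emph{Existence.} The functional $f\mapsto c(f)$ is linear, positive and maps $\one$ to $1$ on $\Hscr_{\alpha_0}(\Sigma\times\Proj)$, so by the same density it extends to a positive, unit-preserving functional on $C(\Sigma\times\Proj)$, i.e.\ to a probability $\bar\nu$ on $\Sigma\times\Proj$. Write $\bar Q_A$ for the Markov operator on functions of $(y,p)$ induced by $K_A$; it agrees with $Q_A$ on functions independent of the first coordinate, and a short estimate using $\norm{A}_\infty,\norm{A^{-1}}_\infty\leq C$ on a neighbourhood of $A$ (together with Lemma~\ref{xi:finite}) shows $\bar Q_A$ preserves $\Hscr_{\alpha_0}(\Sigma\times\Proj)$. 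From
\[ \bar Q_A^{2m+1}h=\bar Q_A\bigl(\bar Q_A^{2m}h\bigr)\to\bar Q_A\bigl(c(h)\,\one\bigr)=c(h)\,\one \quad\text{and}\quad \bar Q_A^{2m+1}h=\bar Q_A^{2m}\bigl(\bar Q_A h\bigr)\to c(\bar Q_A h)\,\one \]
we obtain $c(\bar Q_A h)=c(h)$ for $h\in\Hscr_{\alpha_0}(\Sigma\times\Proj)$, which by density means $\bar\nu$ is $\bar Q_A$-stationary. Finally, define the probability $\nu$ on $\Sigma\times\Sigma\times\Proj$ by $\nu(g):=\int (Q_A g)(y,p)\,d\bar\nu(y,p)$; the $\bar Q_A$-stationarity of $\bar\nu$ (which, two Borel measures coinciding on $C$ being equal, extends to bounded measurable test functions) then yields $\nu K_A=\nu$.

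The routine ingredients are the Stone--Weierstrass density and the bookkeeping relating $Q_A$ to $\bar Q_A$. The delicate point, which I expect to be the main obstacle, is the existence half: since neither $K$ nor $A$ is assumed continuous, one cannot obtain the stationary measure from Krylov--Bogolyubov as a weak-$\ast$ limit of empirical measures, and must instead construct it as a fixed point of the dual of the uniform contraction supplied by Proposition~\ref{QCS}.
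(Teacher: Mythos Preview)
Your uniqueness argument is essentially the paper's: both deduce from the proof of Proposition~\ref{QCS} that the uniform limit of $Q_A^{2m}f$ is the constant $\langle f,\nu\rangle\,\one$ for any stationary $\nu$, and then invoke a density argument to conclude. You are more careful on two points where the paper is terse. The paper simply asserts that $\Hscr_\alpha(\Sigma\times\Proj)$ is dense in $L^\infty(\Sigma\times\Proj)$ and concludes $\nu_B=\mu_B$ directly; you instead use Stone--Weierstrass density in $C(\Sigma\times\Proj)$ to match the marginals on the last two coordinates, and then the observation that $\nu K_A$ depends on $\nu$ only through that marginal (from~\eqref{KhatA:2}) to lift equality of marginals to equality of the full measures on $\Sigma\times\Sigma\times\Proj$. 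This extra step is genuinely needed, since functions in $\Hscr_\alpha(\Sigma\times\Proj)$ are by definition independent of the first coordinate.

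The paper's proof does not address existence at all; your construction of $\bar\nu$ (and then $\nu$) from the limiting functional $c(f)=\lim_m Q_A^{2m}f$ is correct and supplies what the paper leaves implicit. Your remark that Krylov--Bogolyubov is unavailable because the kernel need not be Feller is to the point, and the fixed-point construction you give is the natural substitute once Proposition~\ref{QCS} is in hand.
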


\begin{proof}
In the proof of Proposition~\ref{QCS}  we have shown that given a function
$f\in \Hscr_\alpha(\Sigma\times\Proj)$, if we denote by $S_f$
the set of sublimits of $\{T^{(2)}_{B,\ell} f\}_{\ell\geq 0}$,
then $S_f=\{ \langle f, \nu_B\rangle\,\one\}$ for any $K_B$-stationary measure $\nu_B$.

Hence, given any other $K_B$-stationary measure $\mu_B$,
and $f\in \Hscr_\alpha(\Sigma\times\Proj)$, we have
 $\langle f, \nu_B\rangle = \langle f, \mu_B\rangle$.
Since
$\Hscr_\alpha(\Sigma\times\Proj)$
is dense in 
$L^\infty(\Sigma\times\Proj)$, it follows that
$\nu_B=\mu_B$.
\end{proof}

The Laplace-Markov operator $Q_{A,z}$ of the observed Markov system $(K_A,\mu_A,\xi_A)$ is given by
\begin{equation}
\label{fiber Laplace-Markov operator}
 (Q_{A,z} f)(x,y,p)=
\int_\Sigma f(y,z, A(y,z)\,p)\,\norm{A(y,z)}^z\,
K(y,dz)\;. 
\end{equation}

Like the Markov operator $Q_A$ defined in~\eqref{QA}, the Laplace-Markov operator $Q_{A,z}$ leaves invariant the
 subspaces $ \Hscr_\alpha(\Sigma\times\Proj)$,
for all small enough  $\alpha>0$.
Choose $0<\alpha_1<\alpha_0\leq 1$ according to proposition~\ref{contr:B}.

Assumption (A3) is automatically  satisfied because $\norm {A}_\infty< \infty$ and $\norm {A^{-1}}_\infty< \infty$ which imply that
$\xi_A\in \Hscr_\alpha(\Sigma\times\Proj)$ for all $\alpha>0$. Note that $Q_{A,z}=Q_A\circ D_{e^{z\,\xi_a}}$, where $D_{e^{z\,\xi_a}}$ denotes the multiplication operator by $e^{z\,\xi_a}$. 
This is a bounded operator because $\Hscr_\alpha(\Sigma\times\Proj)$ is a Banach algebra containing the function $e^{z\,\xi_a}$.

Finally the next lemma proves (A4).
\begin{lemma}  \label{C2:lema}
Given $A,B\in\randcocycles{\infty}{m}$ and $b>0$,  there is a constant $C_2>0$  such that for all $f\in \Hscr_\alpha(\Sigma\times\Proj)$,
and all $z\in\C$ such that ${\rm Re}  \, z\leq b$,
$$ \norm{ Q_{A,z} f - Q_{B,z} f }_\infty \leq C_2\, d_\infty(A,B)^ \alpha\,\norm{f}_\alpha\;.$$
\end{lemma}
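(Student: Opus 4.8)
The plan is to estimate the difference $Q_{A,z}f - Q_{B,z}f$ pointwise and then take the supremum. Writing out the defining integral~\eqref{fiber Laplace-Markov operator},
\[
(Q_{A,z}f - Q_{B,z}f)(x,y,p) = \int_\Sigma \Bigl[ f(y,w,A(y,w)p)\,\norm{A(y,w)}^z - f(y,w,B(y,w)p)\,\norm{B(y,w)}^z \Bigr]\, K(y,dw),
\]
so it suffices to bound the integrand uniformly in $y,w,p$. I would split this into two contributions via the triangle inequality: one term where the value of $f$ is held fixed and the scalar weight $\norm{\cdot}^z$ varies, and one term where the weight is held fixed and the argument of $f$ varies.

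For the first term, $\abs{ f(y,w,B(y,w)p) } \leq \norm{f}_\infty$ and I need $\abs{ \norm{A(y,w)}^z - \norm{B(y,w)}^z }$. Since $\norm{A}_\infty, \norm{B}_\infty$ are bounded below (away from $0$) and above, and ${\rm Re}\,z \leq b$, the map $s \mapsto s^z$ is Lipschitz on the relevant compact interval of positive reals with a constant depending only on $b$ and these bounds; hence this difference is $\lesssim \abs{\norm{A(y,w)} - \norm{B(y,w)}} \leq \norm{A - B}_\infty \leq d_\infty(A,B)$. For the second term, $\abs{\norm{A(y,w)}^z} = \norm{A(y,w)}^{{\rm Re}\,z}$ is bounded by a constant depending on $b$ and $\norm{A}_\infty$, and $\abs{ f(y,w,A(y,w)p) - f(y,w,B(y,w)p) } \leq v_\alpha(f)\, \delta(A(y,w)p, B(y,w)p)^\alpha \leq \norm{f}_\alpha\, \delta(A(y,w)p, B(y,w)p)^\alpha$.

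The key remaining estimate, and the main technical point, is a bound of the form $\delta(A(y,w)p, B(y,w)p) \lesssim d_\infty(A,B)$ with a constant depending only on the uniform bounds on $\norm{A^{\pm1}}_\infty, \norm{B^{\pm1}}_\infty$. This is a standard Lipschitz-type property of the projective action: the projective distance $\delta(\hat g p, \hat h p)$ between the images of a single point $p$ under two invertible matrices $g, h$ is controlled by $\norm{g - h}$ times a factor involving $\norm{g}, \norm{h}, \norm{g^{-1}}, \norm{h^{-1}}$. I would invoke (or quickly derive from) the projective-geometry lemmas already referenced from~\cite{LEbook-chap2} — this is exactly the type of elementary estimate proved there for the function $\delta$. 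Combining, the integrand is bounded by a constant (depending on $b$ and the $L^\infty$-bounds on $A^{\pm1}, B^{\pm1}$) times $\norm{f}_\alpha\, d_\infty(A,B)^\alpha$ — note the power $\alpha$ comes from the second term via $\delta(\cdots)^\alpha$, while the first term contributes a full power of $d_\infty(A,B)$ which dominates (is smaller, up to constants) when $d_\infty(A,B)$ is bounded. Since $\int_\Sigma K(y,dw) = 1$, integrating and taking the supremum over $x,y,p$ yields $\norm{Q_{A,z}f - Q_{B,z}f}_\infty \leq C_2\, d_\infty(A,B)^\alpha\, \norm{f}_\alpha$, which is the claim; the constant $C_2$ depends only on $b$ and the chosen uniform bounds on the cocycles, hence works for $B$ in a fixed neighborhood of $A$, giving assumption (A4) with $\theta = \alpha$.
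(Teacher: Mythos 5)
Your proof follows the same two-term decomposition as the paper's: one term isolating the variation of the scalar weight $\norm{\cdot}^z$ (Lipschitz in $s\mapsto s^z$ on a compact interval bounded away from $0$ via $\norm{A^{-1}}_\infty$), and one term isolating the variation of $f$, controlled by $v_\alpha(f)\,\delta(A p, B p)^\alpha$ and then a projective-Lipschitz estimate $\delta(Ap,Bp)\lesssim\norm{A-B}$. You are slightly more careful than the paper in making the dependence on $\norm{A^{\pm1}}_\infty,\norm{B^{\pm1}}_\infty$ explicit for that last projective bound, but the route and the key estimates coincide.
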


\begin{proof}  
A simple computation shows that  for all $z\in\C$ with ${\rm Re}\, z\leq b$, and all $A,B\in\GL(d,\R)$,
$$ \abs{\norm{A\,p}^z - \norm{B\,p}^z }\leq b\, \max\{\norm{A}^{b-1},\norm{B}^ {b-1}\}\,\norm{A-B}\;. $$
Hence
\begin{align*}
& \abs{ (Q_{A,z} f - Q_{B,z} f)(x,p)}  \leq
\EE_x\left[ \abs{ \norm{A\,p}^z\,f(e_1,A\,p) -
\norm{B\,p}^z\,f(e_1,B\,p)  } \right] \\
&\qquad \leq \norm{f}_\infty\, \EE_x\left[ \abs{ \norm{A\,p}^z - \norm{B\,p}^z  } \right] +   \norm{B}_\infty^b\, \EE_x\left[ \abs{  f(e_1,A\,p) -
 f(e_1,B\,p)  } \right] \\
&\qquad \leq   b\, \max\{\norm{A}_\infty^{b-1},\norm{B}_\infty^ {b-1}\}\,\norm{A-B}_\infty\, \norm{f}_\infty  +   \norm{B}_\infty^b\, v_\alpha(f)\, \EE_x\left[ \delta( A\,p ,B\,p)^ \alpha \right] \\
&\qquad \leq   b\, \max\{\norm{A}_\infty^{b-1},\norm{B}_\infty^ {b-1}\}\,\norm{A-B}_\infty\, \norm{f}_\infty\\
&\qquad \qquad + \; \norm{B}_\infty^b\, v_\alpha(f)\, \norm{A-B}_\infty^\alpha \;
 \leq  \; C_2\, \norm{f}_\alpha\, d_\infty(A,B)^ \alpha \;,
\end{align*}
where $C_2=\max\{\norm{B}_\infty^b, b\,\norm{B}_\infty^{b-1}, b\,\norm{A}_\infty^{b-1}\}$.
\end{proof}


\begin{proof}[Proof of Theorem~\ref{Fiber:LDT}]
The space of observed Markov systems
$\Xfrk$  satisfies all assumptions (A1)-(A4). Hence,  by Theorem~\ref{ALDE:uniform}, there exists a  neighborhood $\V$ of 
$(K_A,\mu_A,\xi_A)\in\Xfrk$, which we identify with a neighborhood of
$A\in \randcocycles{\infty}{m}$, and there are constants $\varepsilon_0, C, \ctwo>0$ such that
for all $B\in\V$,   $0<\varepsilon<\varepsilon_0$,  
$(x,p)\in\Sigma\times \Proj$  and  $n\in\N$,
$$ \Pp_x\left[ \, \abs{\frac{1}{n}\,\log \norm{B^{(n)}\,p }- L_1(B,\mu) } \geq \varepsilon \,\right]\leq C\,e^{ - {\varepsilon^2\over {2\,\ctwo}}\, n  } \;. $$
Integrating w.r.t. $\mu$ we get for all $p\in\Proj$,
$$ \Pp_\mu\left[ \, \abs{\frac{1}{n}\,\log \norm{B^{(n)}\,p }- L_1(B,\mu) } \geq \varepsilon \,\right]\leq C\,e^{ - {\varepsilon^2\over {2\,\ctwo}}\, n  } \;. $$
Choose the canonical basis $\{e_1,\ldots, e_d\}$ of $\R^m$ and consider the following norm $\norm{\cdot}'$ on the space of matrices  $\Mat_d(\R)$,
$\norm{M}':= \max_{1\leq j\leq d} \norm{M\, e_j}$.
Since this norm is equivalent to the operator norm, for all $B\in\V$,  $p\in\Proj$ and $n\in\N$,
$$ \norm{ B^{(n)}\,p} \leq \norm{ B^{(n)} } \lesssim \norm{ B^{(n)} }' =   \max_{1\leq j\leq d} \norm{ B^{(n)}\,e_j}\;. $$
Thus a simple comparison of the deviation sets gives
$$ \Pp_\mu\left[ \, \abs{\frac{1}{n}\,\log \norm{B^{(n)} }- L_1(B,\mu) } \geq \varepsilon \,\right]\lesssim \,e^{ - {\varepsilon^2\over {2\,\ctwo}}\, n  }  $$
for all $B\in\V$,   $0<\varepsilon<\varepsilon_0$ and  $n\in\N$.
\end{proof}

\bigskip

\section{Deriving continuity of the Lyapunov exponents}
\label{random_continuity}

In this last section we use the LDT estimates (theorems~\ref{Base:LDT} and~\ref{Fiber:LDT}) to derive the continuity of the Lyapunov exponents and of the  Oseledets's filtration / decomposition. We give some simple generalizations of the continuity results and explain the method's limitations regarding the continuity of the LE in the reducible case.

\subsection{Proof of the continuity}
\label{random_cont_proof}

\begin{proof}[Proof of Theorem~\ref {theorem: continuity of LE}]
 Let $(K,\mu)$ be a strongly mixing Markov system, and  consider the associated Markov shift $(X,\Pp_\mu,T)$.

The collection $\mathcal{C}=\{(\irredcocycles{\infty}{m},d_\infty)\}_{m\in\N}$ is a space of measurable cocycles in the sense of~\cite[Definition 1.8]{LEbook} (see also~\cite[Definition 1.1]{LEbook-chap3}).
We are going to apply the abstract continuity theorem (ACT)~\cite[Theorem 1.6]{LEbook} (see also~(\cite[Theorem 1.1]{LEbook-chap3} and~\cite[Theorems 3.2 and 3.3]{LEbook-chap4}) to this space   of totally irreducible cocycles   
over $(X,\Pp_\mu,T)$.

Consider the space of LDT parameters
$\params=\N\times \devfs\times \mesfs$, where $\devfs$ is the set of constant deviation functions $\devf (t) \equiv \ep$,  $0 < \ep <1$, and we use the set of exponential functions
$\mesfs=\{\, \mesf (t) \equiv M\,e^{- c \, t}\,\colon\,
M<\infty, \, c>0\,\}$ to measure the deviation sets.

Define $\observables$ to be the set of observables $\xi:X\to \R$ which depend only on finitely many coordinates.
Finally, take $p=\infty$.

We now check the four assumptions of the ACT.

1.\; The set $\observables$ is compatible with all cocycles $A\in \randcocycles{\infty}{m}$, because for any
set $F\in \mathscr{F}_N(A)$ its indicator function $\ind_{F}$ depends only on finitely many coordinates, i.e., $\ind_{F}\in\observables$.

2.\;  Given an  observable 
$\xi\in\observables$ there exists $p\in\N$ such that $\xi\circ T^p$ depends only on negative coordinates, i.e., coordinates $x_j$ with $-p\leq j\leq 0$. This implies that $\xi\circ T^p\in \Hscr_\alpha(X^-)$. By Theorem~\ref{Base:LDT}, the observable
$\xi\circ T^p$ satisfies a base-LDT estimate w.r.t. $\params$. Since 
$\abs{S_n(\xi)-S_n(\xi\circ T^p)}$ converges uniformly to zero  as $n\to\infty$, it follows that
$\xi$ satisfies  base-LDT estimates too.

3.\;   The $L^p$-boundedness assumption is automatic because  $p=\infty$ and the functions $A$ and $A^{-1}$ are bounded.

4.\; Given $A\in \irredcocycles{\infty}{m}$ such that $L_1(A)>L_2(A)$, by Theorem~\ref{Fiber:LDT} the cocycle $A$ satisfies uniform fiber-LDT estimates w.r.t. $\params$.

A simple computation shows that the modulus of continuity associated to the choice of deviation function sets $\devfs$ and $\mesfs$ above
corresponds to H\"older continuity.
Hence, this theorem follows from the conclusions of the ACT.
\end{proof}

\subsection{Some generalizations}
\label{random_generalizations}

Consider a compact metric space $\Sigma$.

A {\em Markov kernel} of order $p\in\N$ on $\Sigma$ is a map
$K:\Sigma^p\to {\rm Prob}(\Sigma)$
that assigns a probability measure $K(x_0,\ldots, x_{p-1}, dy)$ on $\Sigma$ to each tuple $(x_0,\ldots, x_{p-1})\in\Sigma^p$. The concept of Markov kernel in Definition~\ref{Markov:kernel} corresponds to a Markov kernel of order $p=1$.

Any Markov kernel $K$ of order $p$ on $\Sigma$ determines the following Markov kernel $\hat K$ of order $1$ on the product space $\Sigma^p$,
$$ \hat K(x_0,\ldots, x_{p-1}):=
\int_\Sigma  \delta_{(x_1,\ldots, x_p)}\,
K(x_0,\ldots, x_{p-1}, d x_p) \;. $$

A probability measure $\mu$ on $\Sigma^p$ is said to be {\em $K$-stationary} when it is $\hat K$-stationary.
We call a Markov system of order $p$ any  pair $(K,\mu)$, where $K$ is a Markov kernel of order $p$ on $\Sigma$, and $\mu$ is a $K$-stationary probability on $\Sigma^p$. We say that $(K,\mu)$ is strongly irreducible when $(\hat K,\mu)$ is a strongly irreducible
Markov system on $\Sigma^p$.

Given a  Markov system $(K,\mu)$ 
of order $p$, let $\hat\Pp_\mu$  denote the Kolmogorov extension of $(K,\mu)$ on the space of sequences $\hat X:=(\Sigma^p)^\Z$.
Then, letting $\hat T:{\hat X}\to{\hat X}$ denote  the shift homeomorphism, the triple 
 $\left( {\hat X},\hat \Pp_\mu, \hat T\right)$ is a Markov shift.
 
Let $X:= \Sigma^\Z$ and consider the maps

\blob\; $\psi:X\to {\hat X}$,
$\psi\{x_n\}_{n\in\Z}=\{(x_{n},\ldots, x_{n+p-1})\}_{n\in\Z}$,

\blob\; $\pi:{\hat X}\to X$,
$\pi\{(x_{0,n},\ldots, x_{p-1,n})\}_{n\in\Z}=\{x_{0,n}\}_{n\in\Z}$,

\noindent
which satisfy $\pi\circ \psi={\rm id}_X$.

Defining $\Pp_\mu:= \pi_\ast \hat\Pp_\mu$,
these maps are bimeasurable isomorphisms conjugating the shifts on $\left( {\hat X},\hat\Pp_\mu\right)$ and $(X,\Pp_\mu)$, where the measure $\Pp_\mu$ is invariant under the shift $T:X\to X$.
The triple $(X,\Pp_\mu,T)$ is called a Markov shift of order $p$.

Consider now the space $\prandcocycles{\infty}{m}$ of measurable functions 
$A:X\to \GL(m,\R)$ which depend  only on the coordinates $(x_0,\ldots, x_p)\in\Sigma^{p+1}$
with  $\norm{A}_\infty<\infty$ and $\norm{A^{-1}}_\infty<\infty$.
Note that the iterates of $A$ are
$$ \An{n}(x) = A(x_{n-1},\ldots, x_{n-1+p})\,\ldots \, A(x_{1},\ldots, x_{1+p})\, A(x_{0},\ldots, x_{p}) \;.$$

We identify $\prandcocycles{\infty}{m}$ as a space of functions 
$A:\Sigma^{p+1}\to\GL(m,\R)$. Each such function  determines a locally constant cocycle  over the Markov shift $(X,\Pp_\mu,T)$.

Given $A\in \prandcocycles{\infty}{m}$,
we define $\hat A: \Sigma^p\times \Sigma^p\to \GL(m,\R)$ 
$$ \hat A \left( (x_0,\ldots, x_{p-1}), (y_0,\ldots, y_{p-1}) \right):=  A(x_0,\ldots, x_{p-1}, y_{p-1})\;.$$
Identifying $\hat A$ with a function 
$\hat A:\hat X \to\GL(m,\R)$
we have  $\hat A \circ \psi = A$.
Hence the cocycles $(\hat T,\hat A)$ and $(T,A)$ are conjugated.

The cocycle $(T,A)$ over the Markov shift $(X,\Pp_\mu,T)$ will be called a {\em random Markov cocycle} of {\em order} $p$.

Define $\pirredcocycles{\infty}{m}$ to be the subspace of {\em totally irreducible} cocycles $A\in \prandcocycles{\infty}{m}$, i.e., the subspace of cocycles $A$   such that
$\hat A$  is  totally irreducible over $(\hat X,\hat\Pp_\mu,\hat T)$.

From these considerations and Theorem~\ref{theorem: continuity of LE} we obtain the following result.

\begin{theorem}
\label{thm: cont of LE p-Markov}
Let  $(K,\mu)$ be a  strongly mixing  Markov system of order $p\in\N$. 

Then all Lyapunov exponents $L_j:\pirredcocycles{\infty}{m}\to \R$, with $1\leq j \leq m$, 
the Ose\-le\-dets filtration $\filt:\pirredcocycles{\infty}{m}\to \Filt(X,\R^m)$,
and the Oseledets decomposition  $\dec:\pirredcocycles{\infty}{m}\to \Dec(X,\R^m)$,
are continuous functions of the cocycle
$A\in \pirredcocycles{\infty}{m}$. 

Moreover,  if $A\in \pirredcocycles{\infty}{m}$ has a $\tau$-gap pattern then the functions
$\Lambda^\tau$, $\filt^\tau$ and $\dec^\tau$ are
H\"older continuous in a neighborhood of $A$.
\end{theorem}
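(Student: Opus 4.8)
\emph{Proof proposal.} The plan is to deduce Theorem~\ref{thm: cont of LE p-Markov} from the order-$1$ result, Theorem~\ref{theorem: continuity of LE}, via the conjugacy between the order-$p$ Markov shift $(X,\Pp_\mu,T)$ and the order-$1$ Markov shift $(\hat X,\hat\Pp_\mu,\hat T)$ on $\hat X=(\Sigma^p)^\Z$ described above. First I would observe that the assignment $\iota:A\mapsto\hat A$ is an isometric embedding of $\prandcocycles{\infty}{m}$ into the space $\randcocycles{\infty}{m}$ of random cocycles over the order-$1$ Markov system $(\hat K,\mu)$ on $\Sigma^p$: since $\hat A\bigl((x_0,\dots,x_{p-1}),(y_0,\dots,y_{p-1})\bigr)=A(x_0,\dots,x_{p-1},y_{p-1})$ one has $d_\infty(\hat A,\hat B)=d_\infty(A,B)$ and $\norm{\hat A^{\pm1}}_\infty=\norm{A^{\pm1}}_\infty<\infty$. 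By the very definition of $\pirredcocycles{\infty}{m}$, the map $\iota$ carries totally irreducible order-$p$ cocycles exactly onto totally irreducible cocycles over $(\hat X,\hat\Pp_\mu,\hat T)$, i.e. it restricts to a continuous (isometric) map $\iota:\pirredcocycles{\infty}{m}\to\irredcocycles{\infty}{m}$; and $(\hat K,\mu)$ is a strongly mixing Markov system by the hypothesis on $(K,\mu)$. Thus Theorem~\ref{theorem: continuity of LE} applies to $(\hat K,\mu)$.

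Next I would transfer the dynamical invariants along the conjugacy. Since $\hat A\circ\psi=A$ and $\psi$ is a bimeasurable isomorphism conjugating $T$ with $\hat T$ and satisfying $\psi_\ast\Pp_\mu=\hat\Pp_\mu$, the iterates obey $\An{n}(x)=\hat A^{(n)}(\psi(x))$ for $\Pp_\mu$-a.e.\ $x$. Hence $\norm{\wedge_j\An{n}(x)}=\norm{\wedge_j\hat A^{(n)}(\psi(x))}$, so by Kingman's theorem $L_j(A)=L_j(\hat A)$ for all $1\le j\le m$; in particular $A$ has a $\tau$-gap pattern if and only if $\hat A=\iota(A)$ does. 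Likewise the Oseledets filtration and decomposition pull back, $\filt(A)=\psi^\ast\filt(\hat A)$ and $\dec(A)=\psi^\ast\dec(\hat A)$, where $(\psi^\ast F)(x):=F(\psi(x))$. The pullback operator $\psi^\ast$ is a well-defined map $\Filt(\hat X,\R^m)\to\Filt(X,\R^m)$ (and $\Dec(\hat X,\R^m)\to\Dec(X,\R^m)$) that sends each stratum $\Filatp{\tau}$ into $\Filatp{\tau}$, commutes with the truncations $F\mapsto F^\tau$, and — because $\psi_\ast\Pp_\mu=\hat\Pp_\mu$ and $\psi^\ast$ acts as the identity on the flag, resp.\ decomposition, value at each point — is an isometry for every pseudo-metric $\dist_\tau$; in particular $\psi^\ast$ is continuous for the topologies defined on these spaces.

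Finally I would conclude by composition. Theorem~\ref{theorem: continuity of LE} applied to $(\hat K,\mu)$ gives that $L_j$, $\filt$, $\dec$ are continuous on $\irredcocycles{\infty}{m}$ and that $\Lambda^\tau$, $\filt^\tau$, $\dec^\tau$ are H\"older continuous on a neighbourhood of any $\hat A$ with a $\tau$-gap pattern. Using the identities $L_j=L_j\circ\iota$, $\filt=\psi^\ast\circ\filt\circ\iota$, $\dec=\psi^\ast\circ\dec\circ\iota$ (and their $\tau$-truncated analogues), and the facts that $\iota$ is an isometric embedding while $\psi^\ast$ is continuous and $\dist_\tau$-contracting, one obtains the continuity of $L_j$, $\filt$, $\dec$ on $\pirredcocycles{\infty}{m}$; moreover, if $A$ has a $\tau$-gap pattern then so does $\iota(A)$, and pulling back the H\"older estimate on a neighbourhood $\hat\V$ of $\iota(A)$ through the isometry $\iota$ and the contraction $\psi^\ast$ yields a H\"older modulus for $\Lambda^\tau$, $\filt^\tau$, $\dec^\tau$ on the neighbourhood $\iota^{-1}(\hat\V)$ of $A$. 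The argument is essentially bookkeeping; the only point needing genuine care — hence the main (still routine) obstacle — is the verification that $\psi^\ast$ is continuous for the somewhat non-standard topologies on $\Filt(X,\R^m)$ and $\Dec(X,\R^m)$, namely that it respects the signature strata $\Filatp{\tau}$, intertwines the truncations, and is an isometry for the pseudo-metrics $\dist_\tau$.
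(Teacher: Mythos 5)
Your proposal is correct and is exactly the reduction the paper has in mind: the paper sets up the conjugacy $\psi$, the induced order-$1$ kernel $\hat K$ on $\Sigma^p$, and the lift $A\mapsto\hat A$ with $\hat A\circ\psi=A$, and then simply states that the theorem "follows from these considerations and Theorem~\ref{theorem: continuity of LE}" — you have spelled out the bookkeeping (isometry of $\iota$, $\An{n}=\hat A^{(n)}\circ\psi$, measure-preservation of $\psi$, pullback of the Oseledets data) that the paper leaves implicit. The only slip is a harmless one: $\iota$ maps $\pirredcocycles{\infty}{m}$ \emph{into}, not \emph{onto}, $\irredcocycles{\infty}{m}$ (a general cocycle over $(\hat K,\mu)$ can depend on all of $y_0,\dots,y_{p-1}$), but you state the correct restricted embedding in the next clause, so the argument is unaffected.
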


\medskip

In particular, all conclusions above  on the continuity of the  LE, the Oseledets filtration, and the Oseledets decomposition, apply to irreducible and locally constant cocycles over strongly mixing Markov and Bernoulli shifts.

\medskip

The abstract setting developed in section~\ref{random_as}
is general enough to deal with cocycle having singularities, i.e., points $x\in X$ where the matrix $A(x)$ is singular.
Consider the family of spaces $\randcocycles{a}{m}$, with $0<a<\infty$, consisting of all bounded measurable functions $A:\Sigma\times\Sigma \to \GL(m,\R)$ such that for some  $C>0$  and all $x\in\Sigma$, 
\begin{equation*} 
\eta_A^a(x):=\int_{\Sigma} \norm{A(x,y)^{-1}}^a \, K(x,dy)\leq C\;.
\end{equation*}
Equip this space with the distance
$$ d_a(A,B):=   \norm{A-B}_\infty + \norm{\eta_A^a-\eta_B^a}_\infty \;. $$
 
The collection $\mathcal{C}=\{ (\randcocycles{a}{m},d_a)\}_{m\in\N}$ is not  a space of measurable cocycles,
because (2) of~\cite[Definition 1.8]{LEbook} (see also~\cite[Definition 1.1]{LEbook-chap3}) fails. However, both the uniform fiber-LDT estimates and the continuity
statments about the LE can be extended to the spaces $\irredcocycles{a}{m}$ of totally irreducible cocycles in $\randcocycles{a}{m}$. More precisely, it can be proved that Theorem~\ref{Fiber:LDT} holds  
for all $a\geq 4$, and 
Theorem~\ref {theorem: continuity of LE} holds 
for all $a\geq 4\,m$.

\subsection{Method limitations}
\label{random_limitations}
We need the irreducibility assumption in order to prove uniform fiber LDT estimates in Theorem~\ref{Fiber:LDT}. The proof exploits the fact that for irreducible  cocycles there is some Banach algebra of measurable functions, {\em independent} of the cocycle, where the associated Laplace-Markov operators act  as quasi-compact and simple operators (see Proposition~\ref{QCS}). For reducible cocycles this fact may still be  true, and it could eventually lead  to fiber LDT estimates. However, the Banach algebra would have to be tailored to the cocycle, and hence the scheme of proof presented here would not provide the required uniformity.

\bigskip

\subsection*{Acknowledgments}

Both authors would like to acknowledge fruitful conversations they had with A. Baraviera, G. Del Magno and  J. P. Gaiv\~ao about large deviation estimates for random cocycles.

The first author was supported by 
Funda\c{c}\~{a}o  para a  Ci\^{e}ncia e a Tecnologia, 
UID/MAT/04561/2013.

The second author was supported by the Norwegian Research Council project no. 213638, "Discrete Models in Mathematical Analysis".  He is also grateful to the University of Lisbon and to the Institut Mittag-Leffler (through its "Research in Peace'' program) for their hospitality during the summer of 2013, when this project began.

\bigskip

\bibliographystyle{amsplain} 
\providecommand{\bysame}{\leavevmode\hbox to3em{\hrulefill}\thinspace}
\providecommand{\MR}{\relax\ifhmode\unskip\space\fi MR }
\providecommand{\MRhref}[2]{%
  \href{http://www.ams.org/mathscinet-getitem?mr=#1}{#2}
}
\providecommand{\href}[2]{#2}

\end{document}